\newtheorem{theorem}{Theorem}[section]
\newtheorem{lemma}[theorem]{Lemma}
\newtheorem{proposition}[theorem]{Proposition}
\newtheorem{remark}[theorem]{Remark}
\newenvironment{proof}[1][Proof]{\noindent\textbf{#1.} }{\ \rule{0.5em}{0.5em}}
\begin{document}
\title{Normal homogeneous  Finsler spaces\thanks{Supported by NSFC (no. 11271216, 11271198, 11221091), State Scholarship Fund of CSC (no. 201408120020), Science and Technology Development Fund for Universities and Colleges in Tianjin (no. 20141005), Doctor fund of Tianjin Normal University (no. 52XB1305) and SRFDP of China}}
\author{Ming Xu$^1$ and Shaoqiang Deng$^2$ \thanks{Corresponding author. E-mail: dengsq@nankai.edu.cn}\\
\\
$^1$College of Mathematics\\
Tianjin Normal University\\
 Tianjin 300387, P.R. China\\
 \\
$^2$School of Mathematical Sciences and LPMC\\
Nankai University\\
Tianjin 300071, P.R. China}
\date{}
\maketitle

\begin{abstract}
In this paper, we study normal homogeneous Finsler spaces. We first define the notion of a normal homogeneous Finsler space, using the method of isometric submersion of Finsler metrics. Then we study the geometric properties. In particular,  we establish a   technique
to reduce the classification of normal homogeneous Finsler spaces of positive flag curvature to an algebraic problem.
The main result of this paper is a classification of positively curved normal homogeneous Finsler spaces. It turns out that a coset space $G/H$ admits a positively curved normal homogeneous Finsler metric if and only if it admits a positively curved normal homogeneous Riemannian metric. We will also give a complete description
of the coset spaces admitting  non-Riemannian positively curved  normal homogeneous Finsler spaces.

\textbf{Mathematics Subject Classification (2000)}: 22E46, 53C30.

\textbf{Key words}: Normal homogeneous spaces; isometric submersion; flag curvature.

\medskip
\noindent\textbf{R\'{e}sum\'{e}}

Dans cet article, nous \'{e}tudions les espaces de Finsler homog\`{e}nes normales. Nous d\'{e}finissons d'abord la notion d'un espace homog\`{e}ne Finsler normale, en utilisant la m¨¦thode de l'immersion isom\'{e}trique de m\'{e}triques de Finsler. Ensuite, nous \'{e}tudions les propri\'{e}t\'{e}s g\'{e}om\'{e}triques.
En particulier, nous \'{e}tablissons une technique
pour r\'{e}duire la classification des espaces de Finsler homog¨¨nes normales de courbure du pavillon positif \`{a} un probl¨¨me alg\'{e}brique. Le r\'{e}sultat principal de cet article est une classification des espaces \`{a} courbure positive de Finsler homog¨¨nes normales. Il se trouve que d'un espace de coset $ G / H $ admet une m\'{e}trique homog\`{e}ne normale courbure positive Finsler si et seulement si il admet une courbure positive normale homog\`{e}ne m\'{e}trique riemannienne. Nous allons aussi donner une description compl\`{e}te
 des espaces de coset admettant non-riemanniennes espaces \`{a} courbure positive de Finsler homog\`{e}nes normales.
\end{abstract}


\section{Introduction}
The goal of this paper is to extend the  study of normal homogeneous Riemannian manifolds to  normal homogeneous Finsler spaces. In Riemannian geometry, normal homogeneous manifolds play very important roles in many topics. For example, the first examples of  Riemannian manifolds of positive sectional curvature, which  are not isometric to a rank one symmetric manifold,  are normal; see Berger's paper \cite{Ber61}. This result motivated the study of homogeneous Riemannian manifolds with positive curvature and eventually led to a complete classification; see \cite{AW75, BB76, Wallach1972}. Meanwhile, normal homogeneous Riemannian manifolds provide many new examples with fine properties. For example, Wang and Ziller
studied normal homogeneous Einstein manifolds and found many new examples of Einstein metrics; See
\cite{WZ}. The notion of normal homogeneous Riemannian manifolds has been generalized to several
classes of special homogeneous Riemannian manifolds, such as $\delta$-homogeneous metrics; see for example \cite{BN}.

We now recall the notion of a normal homogeneous Riemannian manifold. Let  $G$ be a (connected) compact Lie group and $H$
 a closed subgroup of $G$.  Suppose   $\langle\cdot,\cdot\rangle$ is a bi-invariant inner product on the Lie algebra $\mathfrak{g}$ of $G$ and
 \begin{equation}\label{reduct}
 \mathfrak{g}=\mathfrak{h}+\mathfrak{m}
 \end{equation}
 is the orthogonal decomposition, where $\mathfrak{h}$ is the Lie algebra of $H$ and $\mathfrak{m}$ is the orthogonal complement subspace of $\mathfrak{h}$ in $\mathfrak{g}$ with respect to $\langle\cdot,\cdot\rangle$.
 Then $\mathfrak{m}$ is invariant under the adjoint action of $H$. The restriction of $\langle\cdot,\cdot\rangle$ to $\mathfrak{m}$, which is an inner product on $\mathfrak{m}$,  is also $\mathrm{Ad}(H)$-invariant. This restricted inner product induces a $G$-invariant
 Riemannian metric on $G/H$ (see \cite{KN63}). A homogeneous Riemannian metric of this type is called normal.

This definition can not be directly generalized to the Finslerian case. In fact,   even if there exists a bi-invariant Finsler metric on the Lie group $G$, it does not give a decomposition of the Lie algebra as in (\ref{reduct}), since in a Finsler space we have no notion of orthogonality.
To get a natural generalization of the normality in Finsler geometry, we need the notion of isometric
submersion for Finsler metrics. Note that the bi-invariant inner product on $\mathfrak{g}$ defines a bi-invariant Riemannian metric on $G$. The normal Riemannian metric defined above can also be uniquely determined under the requirement that the projection $\pi: G\to G/H$ is a submersion with respect to this bi-invariant
Riemannian metric. On the other hand, in Finsler geometry the theory of isometric submersions has been established in \cite{PD01}, and this method can be used to define the normality in Finsler geometry.

After clarifying the definition of the normality in Finsler geometry, we study some fundamental
geometric properties of normal Finsler spaces. Then we consider normal homogeneous Finsler spaces with positive flag curvature. This leads  to a classification theorem:

\begin{theorem}\label{main1}
Let $G$ be a connected Lie group and $H$ a closed subgroup of $G$.
Suppose the coset space $G/H$ admits a normal homogeneous Finsler space
of positive flag curvature. Then up to equivalence (defined in Section 5)
$G/H$ must be one in the following list.
\begin{description}
\item{\rm (1)}\quad Riemannian symmetric coset spaces of rank 1, i.e., $S^{n-1}=\mathrm{SO}(n)/\mathrm{SO}(n-1)$, $\mathbb{C}\mathrm{P}^{n-1}=\mathrm{SU}(n)/S(\mathrm{U}(n-1)\times \mathrm{U}(1))$, $\mathbb{H}\mathrm{P}^{n-1}=\mathrm{Sp}(n)/\mathrm{Sp}(n-1)\mathrm{Sp}(1)$ and $\mathbb{O}\mathrm{P}^2=\mathrm{F}_4/\mathrm{Spin}(9)$.
\item{\rm (2)}\quad Other normal homogeneous Finsler spheres, i.e., $\mathrm{SU}(n)/\mathrm{SU}(n-1)$,
$\mathrm{U}(n)/\mathrm{U}(n-1)$, $\mathrm{Sp}(n)/\mathrm{Sp}(n-1)$, $\mathrm{Sp}(n)S^1/\mathrm{Sp}(n-1)S^1$, $\mathrm{Sp}(n)\mathrm{Sp}(1)/\mathrm{Sp}(n-1)\mathrm{Sp}(1)$,
$\mathrm{G}_2/\mathrm{SU}(3)$, $\mathrm{Spin}(7)/\mathrm{G}_2$, and $\mathrm{Spin}(9)/\mathrm{Spin}(7)$.
\item{\rm (3)}\quad  Exceptional ones, i.e., $\mathrm{SU}(3)\times \mathrm{SO}(3)/\mathrm{U}^*(2)$, $\mathrm{Sp}(2)/\mathrm{SU}(2)$ and
$\mathrm{SU}(5)/\mathrm{Sp}(2)S^1$.
\end{description}
\end{theorem}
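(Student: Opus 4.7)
The plan is to reduce positive flag curvature for a normal homogeneous Finsler metric to a Lie-algebraic condition on the pair $(\mathfrak{g},\mathfrak{h})$, verify that this condition matches the one characterizing positively curved normal homogeneous Riemannian pairs, and then invoke the classical Riemannian classification together with a case-by-case check of which spaces support genuinely non-Riemannian examples.

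First, using the algebraic reduction promised in the abstract and the isometric submersion formalism of \cite{PD01}, I would write the flag curvature of the normal Finsler metric explicitly at the origin $eH$. Given a bi-invariant Minkowski norm $F$ on $\mathfrak{g}$ determining the normal Finsler metric on $G/H$, this produces an expression for $K(y,y\wedge u)$ with $y,u\in\mathfrak{m}$ in terms of $F$, its fundamental tensor, and the bracket of $\mathfrak{g}$. When $F$ is quadratic the expression must collapse to the classical Wallach formula, proportional to $\|[u,y]_{\mathfrak{m}}\|^{2}+4\|[u,y]_{\mathfrak{h}}\|^{2}$, which I would use as a sanity check.

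The heart of the argument, and what I expect to be the main obstacle, is to show that for an arbitrary bi-invariant $F$ the flag curvature $K(y,y\wedge u)$ still vanishes whenever $y,u\in\mathfrak{m}$ are linearly independent and $[u,y]=0$. Finsler flag curvature carries Chern-connection and Cartan-tensor contributions that have no Riemannian analogue, so one must rule out the possibility that these non-Riemannian terms revive positivity when the Riemannian-type numerator dies. I would attack this by osculating $F$ by an inner product at the flagpole $y$, analyzing the extra correction terms under $\mathrm{Ad}(H)$-invariance, and exploiting the symmetries enforced by the submersion. The payoff is exactly the Wallach-type algebraic condition: linearly independent vectors $y,u\in\mathfrak{m}$ cannot commute, a property depending only on the pair $(\mathfrak{g},\mathfrak{h})$.

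Once this condition is isolated, the coset-space classification is inherited from the Riemannian theory. The combined work of Berger~\cite{Ber61}, Wallach~\cite{Wallach1972}, B\'erard-Bergery~\cite{BB76} and Aloff--Wallach~\cite{AW75} identifies precisely the pairs $(\mathfrak{g},\mathfrak{h})$ satisfying the condition, and, up to the equivalence introduced in Section~5, these are the spaces listed in families (1), (2) and (3). Existence on each is automatic: any positively curved normal Riemannian metric is itself a special normal Finsler metric, so the Riemannian examples realize every entry of the list, and the corresponding non-Riemannian examples are produced by perturbing the bi-invariant inner product on $\mathfrak{g}$ to a bi-invariant Minkowski norm while preserving the algebraic condition.
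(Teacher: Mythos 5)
Your plan hinges on a single algebraic reduction: that for an arbitrary bi-invariant Minkowski norm, the normal Finsler flag curvature $K(y,y\wedge u)$ vanishes whenever $y,u\in\mathfrak{m}$ are linearly independent and $[y,u]=0$, i.e.\ that Berger's Riemannian criterion survives verbatim in the Finsler setting. You flag this as the main obstacle, but you do not actually supply an argument, and the paper explicitly states that it \emph{cannot} prove this without a case-by-case discussion. The authors instead introduce a genuinely weaker necessary condition: positive flag curvature rules out the existence of a \emph{flat splitting subalgebra} (or FSCS) $\mathfrak{s}$, which is required to be an intersection of Cartan subalgebras with $\mathfrak{s}=(\mathfrak{s}\cap\mathfrak{h})+(\mathfrak{s}\cap\mathfrak{m})$ and $\dim(\mathfrak{s}\cap\mathfrak{m})>1$. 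The Cartan-intersection hypothesis is what forces the $\bar F$-horizontal lift of a vector in $\mathfrak{s}\cap\mathfrak{m}$ to land again in $\mathfrak{s}$ (their Lemma on horizontal lifts), so that the torus orbit $S\cdot o$ is a totally geodesic flat submanifold. For a bare commuting pair $y,u$ there is no such control on the horizontal lift, because for a non-quadratic $F$ the lift $y'$ of $y$ depends on $F$ and need not stay inside the centralizer of $u$; your proposed ``osculating inner product'' step must address precisely this, and as written it does not. So the middle of your argument is a genuine gap, not a known lemma.

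As a consequence, the final step also does not go through as stated. Because the paper's Theorem on flat splitting subalgebras gives a condition that is a priori weaker than Berger's no-commuting-pair condition, one cannot simply ``inherit'' the classification from the Riemannian theory; one must verify, algebra by algebra (Sections 6 and 7), that whenever a commuting pair exists one can in fact build an FSCS or flat splitting subalgebra, or otherwise derive a contradiction using the root-system lemmas. That case analysis is the bulk of the paper's proof of this theorem. Finally, a smaller point: the Riemannian input you cite is the wrong one. Wallach, B\'erard-Bergery and Aloff--Wallach classify positively curved homogeneous Riemannian manifolds without the normality hypothesis; the relevant Riemannian classification here is Berger's list of positively curved \emph{normal} homogeneous spaces together with Wilking's addition of $\mathrm{SU}(3)\times\mathrm{SO}(3)/\mathrm{U}^*(2)$, which is exactly what the paper recovers and compares against.
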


Any of the above coset spaces  admit  normal homogeneous Riemannian metrics with positive
sectional curvature. The coset space $\mathrm{SU}(3)\times \mathrm{SO}(3)/\mathrm{U}^*(2)$ was found to admit invariant  normal Riemannian metrics by Wilking in \cite{WILKING1990}, and all the other spaces  were found by Berger in \cite{Ber61}.

This classification, combined with Berger's classification of normal homogeneous Riemannian manifolds of positive sectional curvature \cite{Ber61}, gives the following theorem.
\begin{theorem}\label{main2}
Let $G$ be a connected compact Lie group and $H$ a closed subgroup of $G$. Then there exists a $G$-invariant normal homogeneous
Finsler metric on $G/H$ with positive flag curvature if and only if there exists a normal homogeneous  Riemannian metric
on $G/H$ with positive sectional curvature.
\end{theorem}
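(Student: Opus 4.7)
The plan is to deduce Theorem~\ref{main2} as an almost immediate consequence of Theorem~\ref{main1} combined with Berger's classification of positively curved normal homogeneous Riemannian manifolds in \cite{Ber61}, together with Wilking's additional example $\mathrm{SU}(3)\times\mathrm{SO}(3)/\mathrm{U}^{*}(2)$ in \cite{WILKING1990}. No new geometric input is required beyond what is already developed for Theorem~\ref{main1}; the argument is essentially a matching of two lists.

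The ``if'' direction is really just a definition check. Suppose $G/H$ admits a positively curved normal homogeneous Riemannian metric, coming from a bi-invariant inner product $\langle\cdot,\cdot\rangle$ on $\mathfrak{g}$ and the orthogonal decomposition~(\ref{reduct}). This inner product defines a bi-invariant Riemannian (hence Finsler) metric on $G$, and the isometric-submersion construction of \cite{PD01} applied to $\pi:G\to G/H$ returns precisely the classical normal Riemannian metric: the horizontal distribution at the identity is $\mathfrak{m}$, and the induced Finsler norm on $T_{eH}(G/H)$ is the restriction of $\langle\cdot,\cdot\rangle^{1/2}$ to $\mathfrak{m}$. Since positive sectional curvature is a special case of positive flag curvature, we conclude that any positively curved normal homogeneous Riemannian metric on $G/H$ is also a positively curved normal homogeneous Finsler metric in the sense introduced earlier in the paper.

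For the ``only if'' direction I would invoke Theorem~\ref{main1}: up to the equivalence of Section~5, $G/H$ must appear in the list (1)--(3). Now one checks that each entry in this list already carries a positively curved normal homogeneous Riemannian metric. The rank-one Riemannian symmetric spaces in (1) carry their standard normal metric; the coset presentations in (2) are the Berger homogeneous spheres, each equipped with a positively curved normal Riemannian metric coming from a suitable bi-invariant inner product on the enveloping compact Lie group (see \cite{Ber61}); the three exceptional examples in (3) are Berger's $\mathrm{Sp}(2)/\mathrm{SU}(2)$ and $\mathrm{SU}(5)/\mathrm{Sp}(2)S^{1}$ from \cite{Ber61} together with Wilking's $\mathrm{SU}(3)\times\mathrm{SO}(3)/\mathrm{U}^{*}(2)$ from \cite{WILKING1990}. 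This exhausts the list, so the converse holds.

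The one place that I expect to require real care, and hence the main obstacle, is the interaction between the equivalence relation of Theorem~\ref{main1} and the \emph{existence} of a positively curved normal homogeneous Riemannian metric. One has to verify that this equivalence is compatible with bi-invariant inner products on the Lie algebra and with the reductive decomposition~(\ref{reduct}), so that existence of such a Riemannian metric on a representative in the list genuinely transfers to $G/H$ itself. Once this compatibility is unwound from the definition in Section~5, the theorem follows.
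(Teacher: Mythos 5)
Your proposal is correct and takes exactly the paper's route: at the end of Section~7 the authors deduce Theorem~\ref{main2} from Theorem~\ref{main1} by noting that every coset space in the list already carries a positively curved normal homogeneous Riemannian metric (by Berger and Wilking), and that Riemannian normal metrics are a special case of Finsler normal metrics. The compatibility of the equivalence relation of Section~5 with the existence of a positively curved normal Riemannian metric is indeed left implicit in the paper, but it holds for each of the three generating moves, just as you suspected.
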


It is well known that if $G/H$ is a rank one Riemannian symmetric coset space, then any
$G$-invariant Finsler metric must be Riemannian (see \cite{DE12}). Hence a rank one Riemannian
coset space does not admit any non-Riemannian normal homogeneous Finsler metric metric with positive curvature. It is therefore interesting to find out which coset space in the list
of Theorem \ref{main1} admits non-Riemannian normal homogeneous Finsler metrics with positive curvature. This is completely settled by the following theorem.

\begin{theorem}\label{main3}
Among the coset spaces in the list in Theorem \ref{main1}, any invariant normal
Finsler metric with positive flag curvature on the symmetric spaces of
rank $1$, the homogeneous spheres $S^3=\mathrm{SU}(2)/\mathrm{SU}(1)=\mathrm{Sp}(1)/\mathrm{Sp}(0)$,
$S^6=\mathrm{G}_2/\mathrm{SU}(3)$, and $S^7=\mathrm{Spin}(7)/\mathrm{G}_2$
 must be Riemannian. On the other hand, any of  the other spaces admits non-Riemannian
 normal homogeneous Finsler metrics with positive flag curvature.
\end{theorem}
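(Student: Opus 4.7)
The proof naturally splits into two halves, and I would attack them in sequence. For the first half, asserting that the listed spaces admit only Riemannian normal Finsler metrics with positive flag curvature, the rank one symmetric spaces in list (1) are handled by the cited result from \cite{DE12} that every $G$-invariant Finsler metric on a rank one Riemannian symmetric space is Riemannian. For $S^3 = \mathrm{SU}(2)/\mathrm{SU}(1) = \mathrm{Sp}(1)/\mathrm{Sp}(0)$, the isotropy is trivial, so a normal Finsler metric is exactly a bi-invariant Minkowski norm on $\mathfrak{g} = \mathfrak{su}(2) \cong \mathbb{R}^3$; since the adjoint action of $\mathrm{SU}(2)$ doubly covers $\mathrm{SO}(3)$ and is transitive on the unit sphere, such a norm must be Euclidean, making the metric Riemannian. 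For $S^6 = \mathrm{G}_2/\mathrm{SU}(3)$ and $S^7 = \mathrm{Spin}(7)/\mathrm{G}_2$ the induced Minkowski norm on $\mathfrak{m}$ is $\mathrm{Ad}(H)$-invariant; the isotropy actions here are the standard representations of $\mathrm{SU}(3)$ on $\mathbb{C}^3 \cong \mathbb{R}^6$ and of $\mathrm{G}_2$ on $\mathbb{R}^7$, each transitive on unit spheres, so the induced norm must again be Euclidean and the metric is Riemannian.

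For the existence of non-Riemannian examples on every remaining coset space in Theorem \ref{main1}, my plan is a perturbation argument. Fix a bi-invariant inner product $\langle\cdot,\cdot\rangle$ on $\mathfrak{g}$ whose associated normal metric $\widetilde{F}_0$ on $G/H$ has positive sectional curvature; such a choice exists by Berger's \cite{Ber61} and Wilking's \cite{WILKING1990} classifications. In every remaining case $G$ either has a simple factor of rank at least two or a nontrivial centre, so the cone of $\mathrm{Ad}(G)$-invariant strongly convex Minkowski norms on $\mathfrak{g}$ is infinite dimensional and contains norms arbitrarily close to but distinct from $\langle\cdot,\cdot\rangle^{1/2}$. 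I would select a smooth one parameter family $F_t$ of such norms with $F_0 = \langle\cdot,\cdot\rangle^{1/2}$ and $F_t$ non-Euclidean for $t \neq 0$, and let $\widetilde{F}_t$ be the associated normal Finsler metric on $G/H$ obtained from the isometric submersion of \cite{PD01}, which at the identity coset takes the form $\widetilde{F}_t(X + \mathfrak{h}) = \inf_{Y \in \mathfrak{h}} F_t(X + Y)$ for $X \in \mathfrak{m}$.

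Two verifications then remain. The routine one is preservation of positive flag curvature: $G/H$ is compact and flag curvature depends continuously on the bi-invariant norm in a suitable $C^\infty$-topology, while $\widetilde{F}_0$ already has strictly positive sectional curvature, so for small $|t|$ the metric $\widetilde{F}_t$ retains positive flag curvature. The main obstacle is the second verification, namely that $\widetilde{F}_t$ is genuinely non-Riemannian on $G/H$; \emph{a priori} the infimum in the submersion formula could absorb the perturbation and leave a Euclidean norm on $\mathfrak{m}$. My plan is a case-by-case inspection of the isotropy representation of $H$ on $\mathfrak{m}$ for each coset space remaining after the first half, the key structural input being that in every one of these cases $H$ fails to act transitively on the unit sphere of $\mathfrak{m}$, so that the space of $\mathrm{Ad}(H)$-invariant Minkowski norms on $\mathfrak{m}$ already has dimension strictly larger than one. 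Choosing the perturbation direction in $F_t$ to be supported on root subspaces that project non-trivially into $\mathfrak{m}$ then guarantees that $\widetilde{F}_t$ is non-Riemannian for small $t \neq 0$, and combining the two verifications yields the required non-Riemannian normal Finsler metric of positive flag curvature on each remaining coset space.
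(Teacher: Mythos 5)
Your first half is correct and essentially matches the paper, and your treatment of $S^3$ (where the isotropy is trivial, so one argues directly on the $\mathrm{Ad}(\mathrm{SU}(2))$-invariant norm on $\mathfrak{su}(2)$) is actually cleaner than the paper's phrasing. The overall strategy for the second half — perturb the bi-invariant inner product to a bi-invariant Finsler norm, preserve positivity by continuity — also agrees with the paper. But the crucial third step, ensuring the induced metric on $G/H$ is genuinely non-Riemannian, is where your proposal has a real gap.

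You correctly flag the danger that the infimum in the submersion formula could absorb the perturbation, but your proposed resolutions do not work. First, the criterion ``$H$ fails to act transitively on the unit sphere $\mathcal{S}$ of $\mathfrak{m}$'' is necessary but not sufficient: it tells you the cone of $\mathrm{Ad}(H)$-invariant Minkowski norms on $\mathfrak{m}$ is big, but a normal Finsler metric is not an arbitrary $\mathrm{Ad}(H)$-invariant norm on $\mathfrak{m}$; it must be induced by a bi-invariant (hence $\mathrm{Ad}(G)$-invariant) norm on all of $\mathfrak{g}$ via the submersion, and that projection can, a priori, wash out any asymmetry. The condition the paper isolates, Condition (R), is the precise obstruction: the unit sphere $\mathcal{S} \subset \mathfrak{m}$ must fail to lie in a single $\mathrm{Ad}(G)$-orbit of $\mathfrak{g}$. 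The paper explicitly remarks that ``isotropic $\Rightarrow$ Condition (R)'' but that the converse is open, so intransitivity of $H$ alone does not give you what you need. Second, ``choosing the perturbation direction in $F_t$ to be supported on root subspaces that project non-trivially into $\mathfrak{m}$'' is not coherent as stated: a bi-invariant Finsler norm on $\mathfrak{g}$ is $\mathrm{Ad}(G)$-invariant, so any admissible perturbation is a function of the $\mathrm{Ad}(G)$-orbit and cannot be localized to a choice of root subspaces. The paper instead constructs an $\mathrm{Ad}(G)$-invariant bump function vanishing near $\mathrm{Ad}(G)\cdot x_1$ and equal to $1$ on $\mathrm{Ad}(G)\cdot x_2$ for two points $x_1, x_2 \in \mathcal{S}$ lying in distinct orbits, and then verifies, space by space, that such $x_1, x_2$ exist — for non-simple $G$ by rescaling the factors, and for simple $G$ by exhibiting explicit curves $v(t) \in \mathfrak{m}$ whose eigenvalue sequences are linearly independent as $t$ varies. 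That orbit-separation computation is the load-bearing content of the second half, and it is missing from your proposal.
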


In Section 2, we present some preliminaries on Finsler geometry.
In Section 3, we define the notion of normal homogeneous Finsler space, using the method of isometric submersion in Finsler geometry. Moreover, we study the fundamental geometric
properties of normal homogeneous spaces and  prove that any normal homogeneous
Finsler space has vanishing S-curvature and non-negative flag curvature.
 Sections 4, 5 and 6 are devoted to   classifying all
normal homogeneous spaces of positively flag curvature. This also gives a proof of Theorems \ref{main1} and \ref{main2}. Finally, in Section 7, we complete the proof of Theorem \ref{main3}.

\section{Preliminaries}
\subsection{Minkowski norms and Finsler metrics}
Let $\mathbf{V}$ be a  a real vector space of dimension $n$.  A {\it Minkowski norm} on $\mathbf{V}$ is a
continuous function $F:\mathbf{V}\rightarrow[0,+\infty)$ satisfying the following
conditions:
\begin{description}
\item{\rm (1)} $F$ is positive and smooth on $\mathbf{V}\backslash\{0\}$;
\item{\rm (2)} $F(\lambda y)=\lambda F(y)$ for any $\lambda>0$;
\item{\rm (3)} With respect to any linear coordinates $y=y^i e_i$, the Hessian matrix
\begin{equation}
(g_{ij}(y))=\left(\frac{1}{2}[F^2]_{y^i y^j}(y)\right)
\end{equation}
is positive definite at any $y\ne 0$.
\end{description}
The Hessian matrix $(g_{ij}(y))$ and its inverse $(g^{ij}(y))$ of a Minkowski norm
can be used to raise  or lower down indices of tensors on the vector space.

Given any $y\neq 0$, the Hessian matrix $(g_{ij}(y))$ defines an inner product
$\langle \cdot,\cdot\rangle_y$ (or $\langle \cdot,\cdot\rangle_y^F$ when $F$ needs to be
specified) on $\mathbf{V}$ by
$\langle u,v \rangle_y=g_{ij}(y)u^i v^j$, where $u=u^i e_i$ and $v=v^i e_i$. This inner product can also be
written as
\begin{equation}
\langle u,v\rangle_{y}=\frac{1}{2}\frac{\partial^2}{\partial s
\partial t}[F^2(y+su+tv)]|_{s=t=0},
\end{equation}
which is independent of the choice of the linear coordinates.

A Finsler metric $F$  on a smooth manifold $M$ is a continuous function
$F:TM\rightarrow [0,+\infty)$ such that it is positive and smooth on the slit
tangent bundle $TM\backslash 0$, and its restriction to each tangent space is
a Minkowski norm. We generally say that  $(M,F)$ is a {\it Finsler manifold} or
a {\it Finsler space}.

Important examples of Finsler metrics include
Riemannian metrics, Randers metrics, $(\alpha,\beta)$-metrics, etc.
Riemannian metrics are a special class of Finsler metrics whose
 Hessian matrices at each point only depends on $x\in M$ rather than $y\in T_x M$.
A Riemannian  metric can also be defined as a  global smooth section
$g_{ij}dx^i dx^j$ of $\mathrm{Sym}^2(T^* M)$.
Randers metrics are the  simplest  class of non-Riemannian metrics in Finsler geometry. They
are defined by $F=\alpha+\beta$, where $\alpha$ is a Riemannian metric and $\beta$ is
a 1-form. Randers metrics can be naturally generalized as $(\alpha,\beta)$-metrics which have
the form $F=\alpha\phi(\beta/\alpha)$, where $\phi$ is   a positive smooth function $\phi$ (see \cite{CS04}). In recent years, there have been a lot of  work on Randers metrics
and $(\alpha,\beta)$-metrics.


\subsection{Geodesic spray, geodesics and S-curvature}
Now we  recall some relevant  notations and terminologies.
Let $(M, F)$ be  a Finsler space and $(x^1,x^2,\cdots, x^n)$ be a local coordinate system on an open subset $U$ in $M$.  In the following, we usually use the {\it standard} local coordinates $(x^i,y^j)$ for the open subset $TU$ in the tangent bundle $TM$, where $y=y^j\partial_{x^j}\in T_x M$.

The geodesic spray is a smooth vector field $G$ globally defined on the slit tangent bundle $TM\backslash\, 0$. On  a standard
local coordinate system, $G$ can be given by
\begin{equation}
G=y^i\partial_{x^i}-2G^i\partial_{y^i},
\end{equation}
where
\begin{equation}
G^i=\frac{1}{4} g^{il}([F^2]_{x^k y^l}y^k - [F^2]_{x^l}).
\end{equation}

A curve $c(t)$ on $M$ is called a geodesic if $(c(t),\dot{c}(t))$ is an integral curve
of $G$.  On a  standard local coordinate system, a geodesic $c(t)=(c^i(t))$ can be  characterized as a curve satisfying the partial differential equations
\begin{equation}
\ddot{c}^i(t)+2 G^i(c(t),\dot{c}(t))=0.
\end{equation}

It is well known that the geodesic spray is tangent to the indicatrix bundle in $TM$. So
$F(\dot{c}(t))$ is a constant function when $c(t)$ is a geodesic.  Therefore
we only need to consider geodesics of nonzero constant speed.

Z.Shen defines the following important non-Riemannian curvature in terms of the geodesic spray, which is now generally called S-curvature.

Let $(M, F)$ be  a Finsler space and   $(x^1, \cdots, x^n, y^1,\cdots, y^n)$ a standard local coordinate system. The Busemann-Hausdorff volume form can be
defined as $dV_{BH}=\sigma(x)dx^1\cdots dx^n$, in which
$$
\sigma(x)=\frac{\omega_n}{\mbox{Vol}\{(y^i)\in\mathbb{R}^n|F(x,y^i\partial_{x^i})<1\}},
$$
where $\mbox{Vol}$ denotes the volume of a subset with respect to the standard Euclidian metric on
$\mathbb{R}^n$, and $\omega_n=\mbox{Vol}(B_n(1))$. It is easily seen that the Busemann-Hausdorff
form is globally defined and does not depend on the specific coordinate system. On the other hand,
although the coefficient function $\sigma(x)$ is only locally defined and
depends on the choice of local coordinates $x=(x^i)$, the distortion function
\begin{equation*}
\tau(x,y)=\ln\frac{\sqrt{\det(g_{ij}(x,y))}}{\sigma(x)}
\end{equation*}
on $TM\backslash 0$ is independent of the local coordinates and globally defined.
The S-curvature $S(x,y)$ on $TM\backslash 0$ is  defined as the derivative of
$\tau(x,y)$ in the direction of the geodesic spray $G(x,y)$.

\subsection{Riemannian curvature, flag curvature and totally geodesic submanifolds}

On a Finsler manifold, we have a similar notion of curvature as in the Riemannian case, which
is called the {\it Riemannian curvature}. It can be defined either by the Jacobi field
or the structure equation for the curvature of the Chern connection.

On  a standard local coordinate system
the Riemannian curvature can be given as a family of linear maps $R_y$ (or $R_y^F$ when the
metric needs to be specified), where   $y\in T_xM\backslash 0$, such that
$R_y=R^i_k(y)\partial_{x^i}\otimes dx^k:T_x M\rightarrow T_x M$, here
\begin{equation}\label{1000}
R^i_k(y)=2\partial_{x^k}G^i-y^j\partial^2_{x^j y^k}G^i+2G^j\partial^2_{y^j y^k}G^i
-\partial_{y^j}G^i\partial_{y^k}G^j.
\end{equation}
The Riemannian curvature $R_y$ is self-adjoint with respect to $\langle\cdot,\cdot\rangle_y$.

Using the Riemannian curvature, one can generalize the sectional curvature  to Finsler
geometry, which is called the flag curvature. Let $y$ be a nonzero tangent vector in
$T_x M$ and $\mathbf{P}$ a $2$-dimensional subspace (a tangent plane) containing $y$.
The pair $(y, \mathbf{P})$ is called a flag and $y$ is called the flag pole.
If $\mathbf{P}$ is linearly spanned by $y$ and $v$, then the flag curvature of the flag
for $(y,\mathbf{P})$ is defined by
\begin{equation}
K(x,y,\mathbf{P})=\frac{\langle R_y v,v\rangle_y}
{\langle y,y\rangle_y \langle v,v\rangle_y-\langle y,v\rangle_y^2}.
\end{equation}
The flag curvature may also be denoted as $K(x,y, y\wedge v)$, or $K^F(\cdot,\cdot,\cdot)$ when the metric needs to be specified. It is obvious that
it does not depends on the choice of the nonzero tangent vector $v$ in $\mathbf{P}$.

It is an important observation of Z.Shen that the Riemannian curvature and flag curvature have a very close relation with relevant curvatures
in Riemannian geometry. In fact,
let $Y$ be a tangent field on an open set $\mathcal{U}\subset M$ which
is nowhere vanishing. Then the Hessian matrices $(g_{ij}(Y(x)))$ define a
smooth Riemannian metric $g_Y$ on $\mathcal{U}$.
We say that $Y$ is a geodesic field on an open subset $\mathcal{U}\subset M$, if the integration
curves of $Y$ are geodesics of nonzero constant speed. Now we have   the following theorem.
\begin{theorem}\label{shen-thm}
{\rm (Shen \cite{CS04})}\quad Let $Y$ be a geodesic field on an open set $\mathcal{U}\subset M$ and  suppose for $x\in \mathcal{U}$,
we have $y=Y(x)\neq 0$. Then the Riemannian curvature $R_y^F$ of $F$ coincides with the Riemannian curvature
${R}_y^{g_Y}$ of the Riemannian metric $g_Y$.
\end{theorem}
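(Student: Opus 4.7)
The plan is to show that the spray of $F$ and the spray of $g_Y$ osculate to sufficient order along the locus $\{(x,Y(x)) : x \in \mathcal{U}\}$, and then apply the formula (\ref{1000}) for the Riemannian curvature directly. The persistent algebraic tool will be the identity $C_{ijk}(x,y)y^i = 0$ (coming from the zero-homogeneity of $g_{ij}(x,\cdot)$ in $y$): at $y = Y(x)$ this makes every ``correction term'' coming from the genuine $y$-dependence of the metric tensor drop out.

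First I would verify that $Y$ is also a geodesic field of the Riemannian metric $g_Y$. By hypothesis, the integral curve $c(t)$ of $Y$ satisfies $\ddot c^i + 2G^i(c,\dot c) = 0$, so $G^i(x,Y(x)) = -\tfrac{1}{2} Y^j(x)\partial_{x^j}Y^i(x)$. Writing the spray $\tilde G^i$ of $g_Y$ via its Christoffel symbols and comparing with the Finsler formula $G^i = \tfrac{1}{4} g^{il}([F^2]_{x^k y^l}y^k - [F^2]_{x^l})$, each term in the difference becomes, after using $\partial_{y^m}g_{ij} = 2C_{ijm}$, a Cartan tensor contracted with $Y$; all such terms vanish. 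Hence $G^i(x,Y(x)) = \tilde G^i(x,Y(x))$.

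Next I would establish the same equality for the first and mixed second partial derivatives that appear in (\ref{1000}), namely $\partial_{y^j}G^i$, $\partial^2_{y^j y^k}G^i$, $\partial_{x^k}G^i$ and $\partial^2_{x^j y^k}G^i$, all evaluated at $y = Y(x)$. In each case the $F$- and $g_Y$-quantities differ only by sums of Cartan tensors (possibly differentiated in $x$) contracted with $Y$, all of which vanish by the key identity and its $x$-derivatives. Substituting these equalities into (\ref{1000}) for both $F$ and $g_Y$ yields $R^i_k(Y(x)) = \tilde R^i_k(Y(x))$, which is exactly the claim.

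The main obstacle is the bookkeeping in this last step: formula (\ref{1000}) involves partial derivatives at fixed $y$, which must not be confused with the total derivatives that arise after substituting $y = Y(x)$. Keeping these straight is essential to see that every difference term really is of the ``Cartan-contracted-with-$Y$'' form that the homogeneity identity annihilates; once this is done, the proof is purely algebraic.
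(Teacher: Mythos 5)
The paper itself does not prove this theorem; it cites it directly from Chern--Shen \cite{CS04}, so I am assessing your argument against the standard proof rather than against anything in the paper.

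Your overall plan -- verify that the two sprays osculate along the graph of $Y$ and then compare the two sides of formula (\ref{1000}) -- is the right starting point, and the first step (equality of $G^{i}$ and $\tilde G^{i}$ at $y=Y(x)$) is correct. But there is a genuine gap in the second step. You assert that $\partial^{2}_{y^{j}y^{k}}G^{i}$, $\partial_{x^{k}}G^{i}$ and $\partial^{2}_{x^{j}y^{k}}G^{i}$, evaluated at $y=Y(x)$, each agree between $F$ and $g_{Y}$, with the differences being ``Cartan tensors contracted with $Y$.'' This is not true. For a generic (non-Landsberg) Finsler metric, $\partial^{2}_{y^{j}y^{k}}G^{i}_{F}(x,Y(x))$ is a Berwald connection coefficient that differs from the Christoffel symbol $\Gamma^{i}_{jk}(x)$ of $g_{Y}$: when one expands $G^{i}_{F}=\tfrac12\gamma^{i}_{ab}(x,y)y^{a}y^{b}$ and takes two $y$-derivatives, one picks up terms such as $g^{il}\partial_{x^{b}}C_{lkj}\,y^{b}$ (a Landsberg-type quantity) and $g^{ip}g^{lq}C_{pqj}\gamma_{l,kb}y^{b}$, which are not annihilated by $C_{ijk}y^{i}=0$ or its $x$-derivatives and are not present in $\Gamma^{i}_{jk}(x)$. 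Likewise $\partial_{x^{k}}G^{i}|_{y=Y(x)}$ and $\partial_{x^{k}}\tilde G^{i}|_{y=Y(x)}$ differ by $N^{i}_{m}\partial_{x^{k}}Y^{m}$-type terms. Even the equality of $\partial_{y^{k}}G^{i}$ along the section, which does hold, is more subtle than ``Cartan-contracted-with-$Y$ vanishes'': one needs the geodesic field identity $Y^{m}\partial_{x^{m}}Y^{j}=-2G^{j}(x,Y(x))$ a second time to match the term $C_{lmp}Y^{a}\partial_{x^{a}}Y^{p}$ coming from the $x$-dependence of $g_{Y}$ with the term $C_{pqm}G^{q}$ coming from differentiating $\gamma^{i}_{ka}(x,y)y^{k}y^{a}$.

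The standard repair is to stop trying for term-by-term equality and instead rewrite (\ref{1000}) along the section. Setting $N^{i}_{k}=\partial_{y^{k}}G^{i}$ and letting $\hat G^{i}(x)=G^{i}(x,Y(x))$, $\hat N^{i}_{k}(x)=N^{i}_{k}(x,Y(x))$, the chain rule gives $\partial_{x^{j}}N^{i}_{k}|_{Y}=\frac{d\hat N^{i}_{k}}{dx^{j}}-\partial^{2}_{y^{k}y^{m}}G^{i}\,\partial_{x^{j}}Y^{m}$, and $Y^{j}\partial_{x^{j}}Y^{m}=-2\hat G^{m}$ then forces the combination $-y^{j}\partial^{2}_{x^{j}y^{k}}G^{i}+2G^{j}\partial^{2}_{y^{j}y^{k}}G^{i}$ at $y=Y(x)$ to collapse to $-Y^{j}\frac{d\hat N^{i}_{k}}{dx^{j}}$, so the troublesome second $y$-derivative drops out entirely. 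After the analogous rewrite of $\partial_{x^{k}}G^{i}|_{Y}$, formula (\ref{1000}) depends only on $\hat G^{i}$, $\hat N^{i}_{k}$, $\partial_{x^{k}}Y^{m}$ and their total $x$-derivatives along the section -- all of which do coincide for $F$ and $g_{Y}$. Without this cancellation argument, your step of matching $\partial^{2}_{y^{j}y^{k}}G^{i}$ simply cannot be carried out.
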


It follows from the above theorem that, if $\mathbf{P}$ is a $2$-dimensional tangent plane in $T_x M$ containing $y$, then
$$K^F(x,y,\mathbf{P})={K}^{g_Y}(x,\mathbf{P}).$$

A submanifold $N$ of a Finsler space $(M,F)$ can be naturally endowed a submanifold metric, denoted as $F|_N$. At each point $x\in N$, the Minkowski norm $F|_N(x,\cdot)$
is just the restriction of the Minkowski norm $F(x,\cdot)$ to $T_x N$. We say that $(N,F|_N)$
is a {\it Finsler submanifold} or a {\it Finsler subspace}.

A Finsler submanifold $(N,F|_N)$ of $(M,F)$ is called {\it totally geodesic} if any geodesic of
$(N,F|_N)$ is also a geodesic of $(M,F)$. On a standard local coordinate system $(x^i,y^j)$
such that $N$ is locally defined  by $x^{k+1}=\cdots=x^n=0$, the totally geodesic condition can be equivalently given as
\begin{equation*}
G^i(x,y)=0, \quad k<i\leq n, x\in N, y\in T_x N.
\end{equation*}
A direct calculation shows that in this case the Riemannian curvature $R_y^{F|_N}:T_x N\rightarrow T_x N$ of $(N,F|_N)$ is just the restriction of the Riemannian curvature $R_y^F$ of $(M,F)$, where $y$ is a nonzero tangent
vector of $N$ at $x\in N$. Therefore we have

\begin{proposition}\label{prop-2-2}
Let $(N,F|_N)$ be a totally geodesic submanifold of $(M,F)$. Then for any $x\in N$,
 $y\in T_x N\backslash 0$, and a tangent plane $\mathbf{P}\subset T_x N$, we have
\begin{equation}
K^{F|_N}(x,y,\mathbf{P})=K^F(x,y,\mathbf{P}).
\end{equation}
\end{proposition}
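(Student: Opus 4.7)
The plan is to split the claim into two restriction properties along $T_xN$ and then read off the equality of flag curvatures from the definition. The first property is that for $y\in T_xN\backslash 0$ and $u,v\in T_xN$, one has $\langle u,v\rangle_y^{F|_N}=\langle u,v\rangle_y^F$. The second is that the linear map $R_y^F\colon T_xM\to T_xM$ preserves $T_xN$ and its restriction to $T_xN$ equals $R_y^{F|_N}$. Granting both, the numerator $\langle R_yv,v\rangle_y$ and the denominator in the flag-curvature quotient are unchanged when $F$ is replaced by $F|_N$, which proves the proposition.

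The first property is immediate from the coordinate-free formula for $\langle\cdot,\cdot\rangle_y$: it depends only on $F^2$ along the affine $2$-plane through $y$ spanned by $u,v$, which lies inside $T_xN$, where $F|_N=F$ by definition.

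For the second, I would work in adapted local coordinates $(x^1,\ldots,x^n)$ on $M$ around $x$, cutting $N$ out by $x^{p+1}=\cdots=x^n=0$ where $p=\dim N$, using indices $\alpha,\beta,\gamma$ in $\{1,\ldots,p\}$ and $a,b$ in $\{p+1,\ldots,n\}$, and writing $\hat G^\alpha$ for the spray coefficients of $(N,F|_N)$. The totally geodesic hypothesis asserts $G^a(x,y)=0$ for $y\in T_xN$, and since every geodesic of $(N,F|_N)$ is also a geodesic of $(M,F)$, comparing the two second-order ODE systems along such a curve yields $G^\alpha(x,y)=\hat G^\alpha(x,y)$ on $TN$; both sides are smooth and positively $2$-homogeneous in $y$, so equality of the accelerations for every initial velocity forces equality of the functions. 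Differentiating $G^a\equiv 0$ tangentially to $N$ then gives the further vanishings $\partial_{x^\beta}G^a=\partial_{y^\beta}G^a=\partial^2_{x^\beta y^\gamma}G^a=\partial^2_{y^\beta y^\gamma}G^a=0$ on $TN$. Substituting these identities, together with $G^\alpha=\hat G^\alpha$, into the formula for $R^i_k$ above with $y\in T_xN$ and $i=\alpha$, $k=\beta$, the sum over the dummy index $j$ in each of the four terms collapses to $j\le p$, and every remaining $G^\gamma$ may be replaced by $\hat G^\gamma$. The outcome is precisely the same formula written for $(N,F|_N)$, hence $R^\alpha_\beta(y)=\hat R^\alpha_\beta(y)$. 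A parallel check shows $R^a_\beta(y)=0$ on $TN$, giving $T_xN$-invariance and completing the second property.

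The main obstacle is the bookkeeping inside the $R^i_k$ formula: one has to verify that each of the four terms truncates cleanly after the derived vanishings are used, and that no derivative in a normal direction $\partial_{x^a}$ or $\partial_{y^a}$ is ever needed --- this is exactly what tangential differentiation of the identity $G^a\equiv 0$ guarantees.
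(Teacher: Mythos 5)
Your proposal is correct and follows the same route the paper sketches: work in adapted coordinates where $N$ is a slice, use the characterization $G^a\equiv 0$ on $TN$ (and the resulting identity $G^\alpha=\hat G^\alpha$) to show that $R_y^{F}$ preserves $T_xN$ and restricts there to $R_y^{F|_N}$, and combine this with the observation that the Hessian inner product $\langle\cdot,\cdot\rangle_y$ computed from $F$ or from $F|_N$ agrees on $T_xN$. The paper compresses all of this into the phrase ``a direct calculation shows''; your write-up simply supplies that calculation, including the needed vanishing of $R^a_\beta$, so there is no substantive difference in approach.
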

\subsection{Homogeneous Finsler spaces}

A Finsler manifold $(M,F)$ is called  homogeneous if
the full group $I(M,F)$ of isometries   acts transitively on $M$. It is shown in \cite{DengHou2004} that
$G=I(M,F)$ is a Lie transformation group. Let $H$ be the  isotropy subgroup
at a point $o\in M$. Then $M$ is diffeomorphic to the smooth coset space
$G/H$. The tangent space $T_o M$ can be naturally identified with the quotient space
$\mathfrak{m}=\mathfrak{g}/\mathfrak{h}$ through the  natural $\mathrm{Ad}(H)$-actions (the isotropic representation). In many occasions, $\mathfrak{m}$ can be
regarded as a subspace of $\mathfrak{g}$, if we have a direct decomposition
$\mathfrak{g}=\mathfrak{h}+\mathfrak{m}$ of $\mathfrak{g}$ into the sum of invariant  subspaces of the isotropic representation of $H$.

Similarly,  for any closed subgroup $G$ of $I(M,F)$ which acts transitively on $M$,
we have a presentation $M=G/H$, where $H$ is the isotropic subgroup of $G$ at a fixed point. Sometimes, we may also drop the requirement that
$G\subset I(M,F)$ (or $I_0(M,F)$ when $G$ is connected), and only assume that $G$ acts
on $(M,F)$ isometrically. For the same manifold, different homogeneous presentations
may reveal different geometric properties of the manifold. The most typical example is the nine
classes of homogeneous spheres \cite{BO40}. But sometimes they do not affect the
(local) geometric properties of manifolds. So, to study homogeneous Finsler geometry,
it is convenient to introduce an equivalence relation among different homogeneous spaces.
We will make this equivalence relation precise for normal homogeneous spaces below.

By the homogeneity,
the geometric quantities such as  curvatures of a homogeneous Finsler space
can be reduced to an $\mathrm{Ad}(H)$-invariant vectors in certain tensor algebra of $\mathfrak{m}$.
On the other hand, the homogeneous metric $F$ itself is completely determined by
an $\mathrm{Ad}(H)$-invariant Minkowski norm on $\mathfrak{m}$; see \cite{DE12}.

\subsection{Submersion and subduced metric}
Before defining normal homogeneous spaces in Finsler geometry,
we first briefly review the theory of Finslerian submersions. For details we refer to \cite{PD01}.

A linear map $\pi: (\mathbf{V}_1,F_1)\rightarrow (\mathbf{V}_2,F_2)$ between two Minkowski
spaces is called an {\it isometric submersion} (or simply {\it submersion}), if it maps
the unit ball $\{y\in\mathbf{V}_1|F_1(y)\leq 1\}$ in $\mathbf{V}_1$ onto the unit ball
$\{y\in\mathbf{V}_2| F_2(y)\leq 1\}$ in $\mathbf{V}_2$. It is obvious that a submersion map
$\pi$ must be surjective, and that the Minkowski norm $F_2$ on $\mathbf{V}_2$ is uniquely determined by the following equality:
\begin{equation}\label{0001}
F_2(w)=\inf\{F_1(v)|\pi(v)=w\}.
\end{equation}
Given a Minkowski space $(\mathbf{V}_1,F_1)$ and a surjective linear map
$\pi:\mathbf{V}_1\rightarrow\mathbf{V}_2$, there exists a unique Minkowski norm
$F_2$ on $\mathbf{V}_2$ such that $\pi$ is a submersion. We usually say that $F_2$
is the subduced norm through $\pi$; see \cite{PD01}.

Now we clarify  the relationship between the Hessian matrices of $F_1$ and $F_2$. For this we need the notion
of horizonal lift of vectors. Given a nonzero vector $w$ in $\mathbf{V}_2$, the infimum in (\ref{0001})
can be reached at a unique vector $v\in \mathbf{V}_1$. We call $v$   the horizonal lift of $w$ with respect to the submersion $\pi$.
Obviously the horizontal lift of $0$ is the zero vector.
The horizonal lift $v$ can also be determined by
\begin{equation}
\langle w,\ker \pi\rangle_v=0.
\end{equation}
Now the Hessian matrix of $F_2$ at $w$ is determined by the following proposition.
\begin{proposition}\label{prop-1}
Let $\pi:(\mathbf{V}_1,F_1)\rightarrow(\mathbf{V}_2,F_2)$ be a submersion between Minkowski
spaces. Assume that $v$ is the horizonal lift of the nonzero vector $w$ in $\mathbf{V}_2$. Then $\pi:(\mathbf{V}_1,\langle\cdot,\cdot\rangle_v^{F_1})\rightarrow (\mathbf{V}_2,\langle\cdot,\cdot\rangle_w^{F_2})$ is
also a submersion between Euclidean spaces.
\end{proposition}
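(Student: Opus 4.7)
The plan is to exploit the variational characterization of $F_2$ in $(\ref{0001})$ and the smooth horizontal lift $\sigma:w'\mapsto v'$, which assigns to each nonzero $w'$ near $w$ its unique horizontal lift. First I would verify that $\sigma$ is well-defined and smooth in a neighborhood of $w$. Existence and uniqueness of the minimizer on the affine subspace $\pi^{-1}(w')$ follow from the strong convexity of $F_1^2$ (condition (3) of the Minkowski norm), and smoothness of $\sigma$ is obtained by applying the implicit function theorem to the critical-point equation $(\partial_{y^j}F_1^2)(v')=0$ for $y^j$ ranging over $\ker\pi$, whose Jacobian is the Hessian block $(g_{ij}(v))_{i,j\in\ker\pi}$, which is positive definite.

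Next I would compute the Hessian of $F_2^2$ at $w$ via $F_2^2(w')=F_1^2(\sigma(w'))$. Taylor-expanding along the ray $w'=w+tw_1$, so that $\sigma(w+tw_1)=v+td\sigma(w_1)+\tfrac{t^2}{2}d^2\sigma(w_1,w_1)+O(t^3)$, two observations simplify things: differentiating $\pi\circ\sigma=\mathrm{id}$ once and twice yields $\pi(d\sigma(w_1))=w_1$ and $d^2\sigma(w_1,w_1)\in\ker\pi$; and the horizontal lift condition $\langle w,\ker\pi\rangle_v=0$ already stated in the excerpt implies that the contribution of the $d^2\sigma$ term to the linear coefficient in $t$ vanishes. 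Matching Taylor coefficients of $F_2^2(w+tw_1)$ and $F_1^2(v+td\sigma(w_1)+\cdots)$ then gives
\begin{equation*}
\langle w_1,w_1\rangle_w^{F_2}=\langle d\sigma(w_1),d\sigma(w_1)\rangle_v^{F_1}.
\end{equation*}

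The crucial step, and the one I expect to be the main obstacle, is to show that the image of $d\sigma_w$ is orthogonal to $\ker\pi$ with respect to $\langle\cdot,\cdot\rangle_v^{F_1}$. For this I would differentiate the horizontal lift identity $g_{ij}(\sigma(w'))\,(w')^i u^j=0$ (valid for each fixed $u\in\ker\pi$) in $w'$ at $w$. Using the homogeneity identity $y^i\,\partial_{y^k}g_{ij}(y)=0$, obtained by applying $\partial_{y^k}$ to $g_{ij}(y)y^i=\tfrac12[F^2]_{y^j}(y)$ and subtracting $g_{kj}(y)$, the chain-rule term involving the derivative of $g_{ij}$ along $d\sigma(w_1)$ drops out, leaving exactly $g_{ij}(v)\,d\sigma(w_1)^i u^j=0$, i.e.\ $\langle d\sigma(w_1),u\rangle_v^{F_1}=0$.

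Finally I would assemble the pieces: any lift of $w_1$ can be written as $d\sigma(w_1)+u$ with $u\in\ker\pi$, and the orthogonality just proved together with the Hessian-matching identity yields
\begin{equation*}
\langle d\sigma(w_1)+u,d\sigma(w_1)+u\rangle_v^{F_1}=\langle w_1,w_1\rangle_w^{F_2}+\langle u,u\rangle_v^{F_1}\geq\langle w_1,w_1\rangle_w^{F_2},
\end{equation*}
with equality precisely when $u=0$. This is the defining property (\ref{0001}) for $\pi$ to be an isometric submersion between the Euclidean spaces $(\mathbf{V}_1,\langle\cdot,\cdot\rangle_v^{F_1})$ and $(\mathbf{V}_2,\langle\cdot,\cdot\rangle_w^{F_2})$, completing the proof.
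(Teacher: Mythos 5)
The paper states Proposition~\ref{prop-1} without giving its own proof; it defers to \cite{PD01}, so there is no in-paper argument to compare against. Your proposal is a correct, self-contained derivation, and it does supply the essential ingredient that makes the claim true: the orthogonality $\langle d\sigma(w_1),\ker\pi\rangle_v^{F_1}=0$, obtained by differentiating the horizontality constraint $g_{ij}(\sigma(w'))\sigma(w')^i u^j=0$ and killing the Cartan-tensor term via the Euler-homogeneity identity $\partial_{y^k}g_{ij}(y)\,y^i=0$. Together with the Hessian-matching identity from the Taylor expansion, this gives the orthogonal decomposition of any lift of $w_1$ into $d\sigma(w_1)$ plus a $\ker\pi$ part, and hence the infimum characterization (\ref{0001}) for the Euclidean norms. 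That is exactly what ``$\pi$ is a submersion between Euclidean spaces'' means.

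Two small slips worth fixing, both cosmetic. In the orthogonality step you wrote $g_{ij}(\sigma(w'))\,(w')^i u^j=0$; it should be $\sigma(w')^i$, not $(w')^i$, since the horizontality condition lives entirely in $\mathbf{V}_1$ (the paper's own display $\langle w,\ker\pi\rangle_v=0$ has the same typo and should read $\langle v,\ker\pi\rangle_v=0$). And where you say the $d^2\sigma$ term drops out of ``the linear coefficient in $t$,'' you mean the quadratic coefficient: the constraint $DF_1^2(v)[\ker\pi]=0$ kills the $\tfrac{t^2}{2}DF_1^2(v)[d^2\sigma(w_1,w_1)]$ contribution, which is exactly what makes the $t^2$ coefficients match cleanly. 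Neither affects the validity of the argument.
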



A submersion between two Finsler spaces $(M_1, F_2)$ and $(M_2, F_2)$ is a surjective map
$\rho$ such that for any $x\in M_1$, the tangent map
$d\rho|x: (T_x(M_1), F)\to (T_{\rho(x)}(M_2), F_2)$ is a submersion between Minkowski spaces. The horizonal lift of a tangent vector field
can be similarly defined for a submersion between Finsler spaces.
Then the corresponding integration curves define the horizonal lift of smooth curves.
Horizonal lift provides a one-to-one correspondence between the geodesics on $M_2$
and the horizonal geodesics on $M_1$, so the horizonal lift of a geodesic field
is also a geodesic field. Now Theorem \ref{shen-thm}, Proposition \ref{prop-1} and the curvature formula for
Riemannian submersions give the following theorem in \cite{PD01}.
\begin{theorem}\label{theorem-3-2}
Let $\rho:(M_1,F_1)\rightarrow (M_2,F_2)$ be a submersion. Assume that $x_2=\rho(x_1)$,
and $y_2,v_2\in T_{x_2}M_2$  are linearly independent. Let $y_1$ be the horizonal lift
of $y_2$, and $v_1$ the horizonal lift of $v_2$ for the induced submersion
$\rho_*:(T_{x_1}M_1,\langle\cdot,\cdot\rangle_{y_1}^{F_1})\rightarrow
(T_{x_2}M_2,\langle\cdot,\cdot\rangle_{y_2}^{F_2})$. Then we have
\begin{equation}
{K}^{F_1}(x_1,y_1,y_1\wedge v_1)\leq
K^{F_2}(x_2,y_2,y_2\wedge v_2).
\end{equation}
\end{theorem}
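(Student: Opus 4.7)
The plan is to reduce the claim to the classical O'Neill curvature comparison for Riemannian submersions, using Shen's identification (Theorem \ref{shen-thm}) of Finslerian Riemannian curvature with that of an osculating Riemannian metric induced by a geodesic vector field.

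First I would extend the pole $y_2$ to a geodesic vector field $Y_2$ on an open neighborhood $\mathcal{U}_2$ of $x_2$ in $M_2$. This is standard: flow $y_2$ by the geodesic spray of $F_2$ and spread it out transversely, so that $Y_2(x_2)=y_2$ and the integral curves of $Y_2$ are unit-speed geodesics. Using the horizontal lift for the Finsler submersion $\rho$, I would then lift $Y_2$ to a vector field $Y_1$ defined on $\rho^{-1}(\mathcal{U}_2)$ by setting $Y_1(x_1')$ equal to the horizontal lift of $Y_2(\rho(x_1'))$. The key point, recorded in \cite{PD01}, is that the horizontal lift of a geodesic is a geodesic; hence $Y_1$ is itself a geodesic field on a suitable neighborhood $\mathcal{U}_1$ of $x_1$.

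Next I would pass to the two osculating Riemannian metrics $g_{Y_1}$ on $\mathcal{U}_1$ and $g_{Y_2}$ on $\mathcal{U}_2$ produced by the Hessian construction of Subsection 2.1. By Proposition \ref{prop-1}, at every point $x_1'\in\mathcal{U}_1$ the tangent map
\begin{equation*}
d\rho|_{x_1'}:(T_{x_1'}M_1,\langle\cdot,\cdot\rangle^{F_1}_{Y_1(x_1')})\rightarrow(T_{\rho(x_1')}M_2,\langle\cdot,\cdot\rangle^{F_2}_{Y_2(\rho(x_1'))})
\end{equation*}
is a submersion of Euclidean spaces. Translating this into the osculating metrics means that $\rho:(\mathcal{U}_1,g_{Y_1})\rightarrow(\mathcal{U}_2,g_{Y_2})$ is a genuine Riemannian submersion, and its horizontal distribution in the Riemannian sense matches the Finsler horizontal lift along $Y_1$. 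Moreover, since $v_1$ is defined as the horizontal lift of $v_2$ for the induced Euclidean submersion at $(y_1,y_2)$, the vector $v_1$ lies in the Riemannian horizontal distribution of $g_{Y_1}$ at $x_1$, and $d\rho(v_1)=v_2$.

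Finally, I would invoke O'Neill's formula for Riemannian submersions: for any horizontal $2$-plane $y_1\wedge v_1$ in $(\mathcal{U}_1,g_{Y_1})$,
\begin{equation*}
K^{g_{Y_2}}(x_2,y_2\wedge v_2)=K^{g_{Y_1}}(x_1,y_1\wedge v_1)+\tfrac{3}{4}\,\|[\tilde Y,\tilde V]^{\mathrm{vert}}\|^2\geq K^{g_{Y_1}}(x_1,y_1\wedge v_1),
\end{equation*}
where $\tilde Y,\tilde V$ are basic horizontal extensions of $y_1,v_1$. Since $Y_1$ and $Y_2$ are geodesic fields, Theorem \ref{shen-thm} gives $K^{F_1}(x_1,y_1,y_1\wedge v_1)=K^{g_{Y_1}}(x_1,y_1\wedge v_1)$ and $K^{F_2}(x_2,y_2,y_2\wedge v_2)=K^{g_{Y_2}}(x_2,y_2\wedge v_2)$, which combined with O'Neill's inequality yields the stated bound.

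I expect the main obstacle to be the bookkeeping around the horizontal lift: one must check that the Finsler horizontal distribution of $\rho$ along $Y_1$, defined pointwise by the condition $\langle\cdot,\ker d\rho\rangle_{Y_1}=0$, genuinely coincides with the $g_{Y_1}$-orthogonal complement of the fibers as a smooth distribution on a neighborhood (not merely at the single point $x_1$), so that O'Neill's formula applies. This is exactly what Proposition \ref{prop-1} ensures, once one verifies that $Y_1$ is smooth and nowhere vanishing on $\mathcal{U}_1$; the geodesic field property of $Y_1$, which is used to transfer the curvature identity via Theorem \ref{shen-thm}, is a consequence of the horizontal-lift-of-geodesic principle from \cite{PD01}.
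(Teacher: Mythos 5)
Your proposal is correct and takes essentially the same route the paper indicates: the paper states that the result follows from Theorem \ref{shen-thm}, Proposition \ref{prop-1}, and O'Neill's curvature formula for Riemannian submersions, citing \cite{PD01}, and your argument is precisely the detailed unfolding of that sketch (lift the geodesic field, pass to osculating Riemannian metrics, verify via Proposition \ref{prop-1} that $\rho$ becomes a Riemannian submersion between them, apply O'Neill, and translate back via Theorem \ref{shen-thm}).
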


Submersions can be similarly defined for vector spaces with singular Minkowski norms and
smooth manifolds with singular Finsler metrics. In the level of norms in linear spaces, the singular Minkowski
norm of the domain space can still induce a unique singular Minkowski norm of the target space. But in general horizonal lift can not be uniquely defined, so it does not share most of the properties for the submersion between smooth Minkowski norms or Finsler manifolds.

\section{Normal homogeneous Finsler spaces}
In this section, we will define the notion of normal homogeneous Finsler spaces and study their fundamental geometric properties. Unless otherwise stated, Lie groups and smooth manifolds will always be assumed to be connected.

\subsection{Definition of normal homogeneous spaces}

Let $G$ be a connected Lie group with a bi-invariant Finsler metric $\bar{F}$,
and $H$  a closed
subgroup of $G$. Denote the Lie algebras of $G$ and $H$  as $\mathfrak{g}$ and $\mathfrak{h}$,
respectively. Let $\rho$ be the natural projection from $G$ to $M=G/H$.

The existence of
a bi-invariant Finsler metric on $G$  implies that the universal covering of $G$ is a product of a compact
simply connected Lie group and a Euclidean space. Thus there exists  a bi-invariant inner product
on $\mathfrak{g}$. Fixing a bi-invariant inner product on $\mathfrak{g}$, we then   have the orthogonal decomposition
$\mathfrak{g}=\mathfrak{h}+\mathfrak{m}$ such that
$[\mathfrak{h},\mathfrak{m}]\subset\mathfrak{m}$. Moreover,
the linear space $\mathfrak{m}$
can be identified with  the tangent space of $M$ at the point $o=\rho(e)$.

The following lemma gives the existence of the subduced metric in this situation.
\begin{lemma}\label{lemma-3-1}
Keep all the notations as above. There exists a uniquely defined $G$-invariant metric $F$
on $M$ such that for any  $g\in G$, the tangent map $\rho_*:(T_g G,\bar{F}(g,\cdot))\rightarrow
(T_{\pi(g)}M,F(\pi(g),\cdot))$ is a submersion.
\end{lemma}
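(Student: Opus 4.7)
The plan is to define $F$ pointwise at the base point $o=\rho(e)$ by subducing $\bar{F}(e,\cdot)$ via $\rho_{*,e}$, verify that the resulting Minkowski norm is $\mathrm{Ad}(H)$-invariant, and then transport by the $G$-action to obtain a $G$-invariant smooth Finsler metric on $M$ that automatically satisfies the submersion requirement at every $g\in G$.

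First I would specialize to the identity. The differential $\rho_{*,e}: T_eG\cong\mathfrak{g}\to T_oM$ is surjective with kernel $\mathfrak{h}$, so by the discussion around equation (\ref{0001}) in Section 2.5 there exists a unique Minkowski norm $F(o,\cdot)$ on $T_oM$ for which $\rho_{*,e}$ is an isometric submersion of Minkowski spaces, given explicitly by
\[
F(o,w)=\inf\{\bar{F}(e,v):\rho_{*,e}(v)=w\}.
\]
Smoothness and strong convexity of this subduced norm follow from the general Minkowski submersion theory of \cite{PD01} (cf.\ Proposition \ref{prop-1}), so $F(o,\cdot)$ is genuinely a Minkowski norm on $T_oM\cong\mathfrak{m}$ and not merely a convex function.

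Second, I would check that $F(o,\cdot)$ is $\mathrm{Ad}(H)$-invariant. Bi-invariance of $\bar{F}$ forces $\bar{F}(e,\cdot)$ to be $\mathrm{Ad}(H)$-invariant on $\mathfrak{g}$, and $\mathrm{Ad}(h)$ preserves $\ker\rho_{*,e}=\mathfrak{h}$ while being intertwined, through $\rho_{*,e}$, with the differential at $o$ of the left translation $\tau_h$ on $M$. The infimum defining $F(o,\cdot)$ is therefore preserved under this intertwined action. By the standard construction recalled at the end of Section 2.4, this $\mathrm{Ad}(H)$-invariant Minkowski norm on $\mathfrak{m}$ extends uniquely to a $G$-invariant Finsler metric $F$ on $M$, and smoothness on $TM\setminus 0$ follows from smoothness of $F(o,\cdot)$ together with smoothness of the $G$-action.

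Third, I would verify that $\rho_*$ is a submersion at every $g\in G$, not just at $e$. Writing $L_g$ and $\tau_g$ for the left translations on $G$ and on $M$, one has $\rho\circ L_g=\tau_g\circ\rho$, and both $L_g$ (by left-invariance of $\bar{F}$) and $\tau_g$ (by construction of $F$) are isometries. Differentiating this identity transports the submersion property at $e$ to the submersion property at $g$. Uniqueness is then immediate: any $G$-invariant $F'$ making $\rho$ a submersion at $e$ must have $F'(o,\cdot)$ equal to the subduced Minkowski norm on $T_oM$, and $G$-invariance forces $F'=F$ globally. The delicate step I expect is not the subduction at $o$ itself but keeping track of the interplay between bi-invariance of $\bar{F}$, $\mathrm{Ad}(H)$-equivariance of $\rho_{*,e}$, and smoothness of the subduced norm; each of these fails without the precise hypotheses, and all three are needed to descend the Minkowski structure on $\mathfrak{g}$ to a genuine smooth $G$-invariant Finsler metric on the quotient $G/H$.
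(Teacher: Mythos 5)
Your proposal follows essentially the same route as the paper's proof: subduce the Minkowski norm on $\mathfrak{m}=T_oM$ from $\bar{F}(e,\cdot)$ via $\rho_{*,e}$, note $\mathrm{Ad}(H)$-invariance of the result, invoke the unique $G$-invariant extension to a Finsler metric on $M$ (as in \cite{DE12}), and propagate the submersion property from $e$ to arbitrary $g$ using the equivariance $\rho\circ L_g=\tau_g\circ\rho$ together with bi-invariance of $\bar{F}$. You spell out the uniqueness step and the smoothness/strong-convexity of the subduced norm a bit more explicitly than the paper does, but the argument is the same.
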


\begin{proof}
The tangent map $\rho_*:T_e G\rightarrow T_o M$ defines a unique subduced Minkowski norm
$F$ on $\mathfrak{m}=T_o M$ from $\bar{F}(e,\cdot)$. Since $\bar{F}(e,\cdot)$ is
$\mathrm{Ad}(H)$-invariant, $F$ is also $\mathrm{Ad}(H)$-invariant. Then there exists a (unique) $G$-invariant Finsler metric on $M$ whose restriction to $T_oM$ is equal to
$F$ (see \cite{DE12}).
For simplicity, we  denote this Finsler metric as $F$.
Now given $g\in G$, the tangent map $\rho_*|_{T_g G}=g_*\circ\pi_*|_{T_e G}\circ (L_{g^{-1}})_*$
is a submersion between the Minkowski spaces  $(T_g G,\bar{F}(g,\cdot))$ and $(T_{\pi(g)}M,F(\pi(g),\cdot))$,
since $\rho_*|_{T_e G}$ is a submersion, and $\bar{F}$ is bi-invariant.
\end{proof}

The $G$-invariant Finsler metric $F$ defined in Lemma \ref{lemma-3-1} will be called the {\it normal
homogeneous metric} induced by $\bar{F}$. We will also call $(M,F)$ the normal homogeneous space
induced by $\rho:G\rightarrow M=G/H$ and $\bar{F}$.

This definition is a natural generalization of the Riemannian normal homogeneous space.
Although in most cases the normal homogeneity is only referred to a compact coset space $M=G/H$  of a  compact Lie group $G$, the theory can be applied to some special non-compact coset spaces with  very minor technical adjustment. Note that if $G$ is a compact simple Lie group,
then there exists a unique bi-invariant Riemannian metric on $G$ (up to homotheties).
  However, there usually exists infinitely many bi-invariant Finsler metrics  on a compact Lie group (up to homotheties),  even for a compact simple Lie group.
This implies  that  normal homogeneous Finsler metrics are not     related to the group $G$ as closely as   in the Riemannian case. Hence   the problem
of this paper is much more difficult than the same problem in the Riemannian case.

We end this subsection with the following easy but useful observation on normal metrics on  coset spaces of products of Lie groups.

Let $\bar{F}_1$ be a bi-invariant Finsler metric on $G_1=G_2\times G_3$. Then it induces
a bi-invariant normal homogeneous metric $\bar{F}_2$ on $G_2$ through the projection
from $G_1$ to its $G_2$-factor. For any closed subgroup $H_2\in G_2$, denote $H_1=H_2\times G_3$. Then $\bar{F}_1$ and $\bar{F}_2$ define isometric normal homogeneous metrics $F_1$ and
$F_2$ respectively on $M_1=G_1/H_1=G_2/H_2=M_2$. Any normal homogeneous metric $F_2$ on $G_2/H_2$ can be obtained in this way.
The identity map gives a many-to-one correspondence between
the normal homogeneous Finsler metrics on $G_1/H_1$
and the normal homogeneous Finsler metrics on $G_2/H_2$.

\subsection{The flag curvature and S-curvature}
Now we consider the fundamental geometric properties of normal homogeneous Finsler spaces. It turns out that this class of spaces behave very well.
We first prove
\begin{proposition}\label{prop-3-2}
A normal homogeneous space has vanishing S-curvature and non-negative flag curvature.
\end{proposition}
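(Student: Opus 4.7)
The strategy is two-step: first establish both statements on the total space $(G,\bar F)$, then transfer them to $(M,F)$ via the submersion $\rho$ of Lemma~\ref{lemma-3-1}. The flag curvature transfer will be a direct application of Theorem~\ref{theorem-3-2}, while for the S-curvature I will use that horizontal lifts of $F$-geodesics through $o$ are one-parameter subgroups in $G$.

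For $\bar F$ itself, the geodesics through $e$ are the one-parameter subgroups $\exp(tX)$, since a bi-invariant Finsler metric shares its geodesic spray at $e$ with any bi-invariant Riemannian metric. The Busemann--Hausdorff form is bi-invariant, so the distortion $\tau^{\bar F}$ is left-invariant on $TG\setminus 0$, and along $c(t)=\exp(tX)$ with $\dot c(t)=(L_{\exp(tX)})_*X$ the value $\tau^{\bar F}(c(t),\dot c(t))=\tau^{\bar F}(e,X)$ is constant, giving $S^{\bar F}\equiv 0$. For non-negative flag curvature I fix $X\in\mathfrak g\setminus 0$, extend it to a left-invariant (hence geodesic) field $\tilde X$, and apply Theorem~\ref{shen-thm} to reduce to computing the sectional curvature of the osculating Riemannian metric $g_{\tilde X}$. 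Differentiating the $\mathrm{Ad}(\exp(tX))$-covariance of the Hessian of $\bar F$ yields $\langle[X,u],v\rangle_X^{\bar F}+\langle u,[X,v]\rangle_X^{\bar F}=0$, i.e.\ $\mathrm{ad}(X)$ is skew with respect to $\langle\cdot,\cdot\rangle_X^{\bar F}$; the standard computation for left-invariant Riemannian metrics then yields non-negative sectional curvature on any plane containing $X$, so $K^{\bar F}\ge 0$.

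To transfer to $(M,F)$, the flag curvature statement is immediate from Theorem~\ref{theorem-3-2} applied to $\rho$. For the S-curvature, given $w\in \mathfrak m\setminus 0$ let $v\in\mathfrak g$ be its horizontal lift, characterized by $\langle v,\mathfrak h\rangle_v^{\bar F}=0$. The curve $\exp(tv)$ is a $\bar F$-geodesic, and left-invariance of $\bar F$ preserves the horizontality condition along $L_{\exp(tv)}$, so $\exp(tv)$ is horizontal at every $t$. Its projection $c(t)=\exp(tv)\cdot o$ is then the $F$-geodesic in $M$ with $\dot c(0)=w$ and $\dot c(t)=(\exp(tv))_* w$. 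Since $F$ and the Busemann--Hausdorff form on $M$ are both $G$-invariant, so is $\tau^F$, whence $\tau^F(c(t),\dot c(t))=\tau^F(o,w)$ is constant in $t$ and $S^F(o,w)=0$. Homogeneity extends this to $S^F\equiv 0$ on all of $TM\setminus 0$.

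The main obstacle I anticipate is the non-negativity of the flag curvature of $\bar F$ itself. Once this is granted, the submersion transfer is formal and the S-curvature vanishing uses only invariance; but controlling the sectional curvature of the merely left-invariant osculating metric $g_{\tilde X}$ via the skewness of $\mathrm{ad}(X)$ is the Finsler analogue of the classical bi-invariant Riemannian computation, and is the single step where any real work is required.
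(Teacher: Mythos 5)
Your proposal is correct and follows the same two-step strategy as the paper: establish $K^{\bar F}\geq 0$ and $S^{\bar F}\equiv 0$ on $(G,\bar F)$, then transfer via the submersion $\rho$ using Theorem~\ref{theorem-3-2} for the curvature and invariance of the distortion along the horizontal one-parameter subgroup $\exp(t\bar y)$ for the S-curvature. Your S-curvature argument is essentially identical to the paper's; the paper phrases the constancy of $\tau$ via the Killing field induced by the right-invariant extension of $\bar y$, while you use the $G$-invariance of $\tau^F$ and the identity $\dot c(t)=(\exp(tv))_*w$, which is the same observation.

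Where you genuinely diverge is the justification of $K^{\bar F}\geq 0$ on $G$. The paper invokes the Berwald property of a bi-invariant Finsler metric (citing \cite{DE12}): the Chern connection of $\bar F$ coincides with the Levi-Civita connection of a bi-invariant Riemannian metric $g$, so $R^{\bar F}_X=R^g_X=-\frac14\mathrm{ad}(X)^2$, and non-negativity follows from skewness of $\mathrm{ad}(X)$ with respect to $\langle\cdot,\cdot\rangle^{\bar F}_X$. You instead apply Theorem~\ref{shen-thm} directly to the left-invariant osculating metric $g_{\tilde X}$ and reduce to the Milnor-type claim that for a \emph{left}-invariant Riemannian metric, skew-adjointness of $\mathrm{ad}(X)$ forces $K(X,\cdot)\geq 0$. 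This claim is true — in an orthonormal frame with $e_1=X$ and $\alpha_{ijk}=\langle[e_i,e_j],e_k\rangle$, skewness gives $\alpha_{1jj}=0$ and $\alpha_{k12}=\alpha_{12k}$, and Milnor's sectional curvature formula collapses to $\kappa(e_1,e_2)=\frac14\sum_k\alpha_{2k1}^2\geq 0$ — but it is \emph{not} the bi-invariant computation you gesture at; a general left-invariant metric requires more care, and you should supply the few lines above rather than cite it as ``standard.'' With that step filled in, your route has the mild advantage of avoiding the Berwald property of $\bar F$ entirely, at the cost of one slightly more delicate curvature estimate.
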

\begin{proof}
Let  $(M,F)$ be a normal homogeneous Finsler space  induced by the projection $\rho:G\rightarrow M=G/H$
and the bi-invariant metric $\bar{F}$ on $G$. Then   $\bar{F}$ is a Berwald metric (see \cite{DE12}), i.e.,
its Chern connections coincide with the Levi-Civita connection of a Riemannian metric. So the flag curvature of $(G,\bar{F})$ is non-negative.
Moreover, it is also known that any left or right invariant field on $G$ is a geodesic field.
Now the statement for the flag curvature follows from this observation and Theorem \ref{theorem-3-2}.

 Now we prove that $(M, F)$ has vanishing S-curvature.
On a   local standard coordinate system, the S-curvature $S(x,y)$ is the derivative of the distortion function
\begin{equation}
\tau(x,y)=\ln\frac{\sqrt{\det(g_{ij}(x,y))}}{\sigma(x)}
\end{equation}
in the direction of the geodesic spray $G(x,y)$.
By the homogeneity, we only need to prove that
$S(o,y)=0$ for any nonzero tangent vector $y\in \mathfrak{m}=T_o M$. Let $\bar{y}\in T_e G=
\mathfrak{g}$ be the horizonal lift of $y$. The one-parameter subgroup
$\bar{c}(t)=\exp (t\bar{y})$
is a horizonal geodesic which is the horizonal lift of the geodesic $c(t)$ at $o\in M$ in the direction of $y$. The vector $\bar{y}\in\mathfrak{g}$ defines a right invariant vector field
on $G$ which induces a Killing vector field $Y$ on $M$. The integration curve of $Y$ at $o$
coincides with the geodesic $c(t)$. So along $(c(t),\dot{c}(t))$, the distortion function
$\tau$ is a constant function. Therefore  $S(o,y)$ is equal to $0$.
\end{proof}

As S-curvature has some mysterious relationship  with other curvatures in Finsler geometry,
 Proposition \ref{prop-3-2} may be useful in the  study of other curvatures  of normal homogeneous spaces.

Normal homogeneous Finsler spaces provide us with a large
class of complete Finsler manifolds of non-negative flag curvature. Thus it is important to find out the condition for a normal homogeneous Finsler space to have strictly positive flag curvature. For simplicity,
a Finsler space with strictly positive flag curvature will be simply called {\it positively curved}.
Recall that positively curved normal homogeneous Riemannian manifolds have been classified by Berger in \cite{Ber61}, and this classification produces several new examples of positively
 curved homogeneous Riemannian manifolds besides the  rank one symmetric spaces.
To find  all possible (connected) coset spaces admitting positively curved normal homogeneous Finsler metrics, we will need more accurate results on the flag curvature, which will be discussed in the next subsection.

\subsection{Flat splitting subalgebras and vanishing of flag curvature}
Let $F$ be a normal homogeneous Finsler metric on $M=G/H$ induced by the projection
$\rho:\mathfrak{g}\rightarrow\mathfrak{h}$ and a bi-invariant Finsler metric $\bar{F}$
on $G$. We keep all the notations as above.

Let $\mathfrak{g}=\mathfrak{h}+\mathfrak{m}$ be the orthogonal decomposition
with respect to a fixed bi-invariant inner product on $\mathfrak{g}$.
A commutative subalgebra $\mathfrak{s}$ of $\mathfrak{g}$ is called a
{\it flat splitting subalgebra} with respect to the pair $(\mathfrak{g},\mathfrak{h})$ (or with respect to the decomposition $\mathfrak{g}=\mathfrak{h}+\mathfrak{m}$) if the following conditions are satisfied
\begin{description}
\item{\rm (1)} $\mathfrak{s}$ is the intersection of a family of Cartan subalgebras
$\mathfrak{t}_a$, $a\in\mathcal{I}$, i.e., $\mathfrak{s}=\bigcap_{a\in\mathcal{I}}\mathfrak{t}_a$;
\item{\rm (2)} $\mathfrak{s}$ is splitting,  that is, $\mathfrak{s}=(\mathfrak{s}\cap\mathfrak{h})+(\mathfrak{s}\cap\mathfrak{m})$;
\item{\rm (3)} $\dim\mathfrak{s}\cap\mathfrak{m}>1$.
\end{description}
If furthermore $\mathfrak{s}$ is a Cartan subalgebra of $\mathfrak{g}$, then it is called
a {\it flat splitting Cartan subalgebra} (FSCS for simplicity).

The following theorem is the main technique to reduce the classification of  connected
coset spaces admitting positively curved normal homogeneous metrics to an algebraic problem.

\begin{theorem}\label{thm-4-2}
Keep all  the notations as above and assume that the normal homogeneous Finsler metric $F$ on
$M=G/H$ is positively curved. Then for any closed subgroup $K$ of $G$ containing the identity
component $H_0$ of $H$ and the corresponding orthogonal decomposition $\mathfrak{g}=\mathfrak{k}+\mathfrak{p}$ with respect to the same fixed bi-invariant
inner product, there does not exist any FSCS or  flat splitting subalgebra of $\mathfrak{g}$ with respect to the pair $(\mathfrak{g},\mathfrak{k})$.

In particular, if $\dim K<\dim G$, then $G/K$ admits positively curved normal homogeneous
Finsler metrics. Moreover,   there dose not exist any FSCS or flat splitting subalgebra of
$\mathfrak{g}$ with respect to the decomposition $\mathfrak{g}=\mathfrak{h}+\mathfrak{m}$.
\end{theorem}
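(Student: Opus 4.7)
My plan splits the proof into a reduction step via Finsler submersion and a core geometric argument exhibiting a flat totally geodesic surface in $(G/H, F)$.

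For the reduction, the fixed bi-invariant $\bar F$ on $G$ simultaneously induces, via Lemma \ref{lemma-3-1}, normal Finsler metrics on $G/H_0$ and on $G/K$ for every closed subgroup $K \supseteq H_0$. The finite covering $G/H_0 \to G/H$ is a local isometry of these induced metrics (the tangent Minkowski norms at the base points coincide since $\mathfrak{h}_0 = \mathfrak{h}$, and both spaces carry the resulting $G$-invariant metric), so positive flag curvature of $F$ passes to $G/H_0$. Because $H_0 \subseteq K$, the projection $\pi : G/H_0 \to G/K$ is a Finsler submersion: a direct check shows the Minkowski norm on $\mathfrak{g}/\mathfrak{k}$ is the subduced norm along $\pi_\ast$ from that on $\mathfrak{g}/\mathfrak{h}$, both being in turn subduced from $\bar F$ on $\mathfrak{g}$. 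Theorem \ref{theorem-3-2} then gives that $G/K$ carries a positively curved normal Finsler metric whenever $\dim K < \dim G$; this is the first ``In particular'' clause. Hence it suffices to prove the core assertion: a positively curved normal Finsler metric on $G/H$ forbids FSCS and flat splitting subalgebras with respect to $(\mathfrak{g}, \mathfrak{h})$. Applying this to the positively curved $G/K$ yields the main statement for general $K$, and specializing to $K = H$ delivers the ``Moreover''.

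For the core assertion, suppose for contradiction that a FSCS or flat splitting subalgebra $\mathfrak{s}$ exists. Condition (3) provides linearly independent $y, v \in \mathfrak{s} \cap \mathfrak{m}$, and condition (1) (or $\mathfrak{s}$ being Cartan) forces $\mathfrak{s}$ to be commutative, so $[y,v] = 0$. Let $S = \exp \mathfrak{s} \subseteq G$ and consider the orbit $S \cdot o \subseteq G/H$. The restriction $F|_{S \cdot o}$ is $S$-invariant, hence translation-invariant on the abelian quotient $S/(S \cap H)$, and therefore flat. Condition (2) identifies $T_o(S \cdot o)$ with $\mathfrak{s} \cap \mathfrak{m}$, so it contains the plane $y \wedge v$. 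Once $S \cdot o$ is shown to be totally geodesic in $(G/H, F)$, Proposition \ref{prop-2-2} forces $K^F(o, y, y \wedge v) = 0$, contradicting the hypothesis of positive flag curvature.

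The main obstacle is this totally geodesic verification. Concretely, it amounts to showing that the Finsler horizontal lift $\tilde w \in \mathfrak{g}$ of any $w \in \mathfrak{s} \cap \mathfrak{m}$ along the submersion $(G, \bar F) \to (G/H, F)$ lies in $\mathfrak{s}$; once $\tilde w \in \mathfrak{s}$, the $F$-geodesic $\exp(t \tilde w) \cdot o$ from $o$ with initial velocity $w$ stays in $S \cdot o$. The horizontal lift is characterized by $\langle \tilde w, \mathfrak{h} \rangle_{\tilde w}^{\bar F} = 0$, which depends sensitively on $\bar F$ and on the direction $w$, so in general $\tilde w \neq w$. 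This is where condition (1) becomes essential: since $\mathfrak{s} = \bigcap_a \mathfrak{t}_a$ and for each $a$ the abelian torus $T_a = \exp \mathfrak{t}_a$ fixes $w$ under the adjoint action, the inner product $\langle \cdot, \cdot \rangle_w^{\bar F}$ is $\mathrm{Ad}(T_a)$-invariant; via the root-space decomposition of $\mathfrak{g}$ under $\mathrm{Ad}(T_a)$ one expects to constrain $\tilde w$ to lie in $\mathfrak{t}_a$ for every $a$, and hence in $\mathfrak{s}$. Completing this reasoning, combined with condition (2), closes the totally geodesic claim and yields the desired contradiction through Proposition \ref{prop-2-2}.
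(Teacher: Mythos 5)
Your overall plan coincides with the paper's: reduce to $K=H$ via Finsler submersion, exhibit a flat totally geodesic torus $S\cdot o$ through $o$, and invoke Proposition \ref{prop-2-2} to kill a flag curvature. The reduction paragraph is essentially right (you omit only the easy remark that $\dim G/K=1$ is impossible because a compact positively curved $G/H_0$ has finite fundamental group; the paper records this explicitly).

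The genuine gap is in the verification that $S\cdot o$ is totally geodesic — precisely the content of the paper's Lemma \ref{lemma-4-3}, which you correctly isolate as ``the main obstacle'' but do not close. Your sketch via the $\mathrm{Ad}(T_a)$-invariance of $\langle\cdot,\cdot\rangle_w^{\bar F}$ evaluates the fundamental tensor at $w$, whereas the horizontal-lift condition $\langle \tilde w,\mathfrak{h}\rangle_{\tilde w}^{\bar F}=0$ lives at the unknown $\tilde w$; this is why ``one expects to constrain'' does not immediately yield $\tilde w\in\mathfrak{s}$. What the torus-invariance actually gives (after passing from the regular locus by continuity) is only the weaker statement that at every nonzero $y\in\mathfrak{s}$ the derivative of $\bar F$ vanishes on $\mathfrak{s}^\perp=\sum_a\mathfrak{t}_a^\perp$; equivalently $\langle y,\mathfrak{s}^\perp\rangle_y^{\bar F}=0$, since $\langle y,u\rangle_y^{\bar F}=\bar F(y)\,d\bar F|_y(u)$. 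This does not make $w$ itself horizontal, because nothing controls $\langle w,\mathfrak{s}\cap\mathfrak{h}\rangle_w$. The paper's Lemma \ref{lemma-4-3} fixes this with a second step you do not supply: restrict $\bar F$ to the Minkowski space $\mathfrak{s}$ and replace $w$ by the unique $y'\in w+(\mathfrak{s}\cap\mathfrak{h})\subset\mathfrak{s}$ at which $d\bar F|_{y'}$ also annihilates $\mathfrak{s}\cap\mathfrak{h}$; then the splitting hypothesis $\mathfrak{h}=(\mathfrak{s}\cap\mathfrak{h})+(\mathfrak{s}^\perp\cap\mathfrak{h})\subset(\mathfrak{s}\cap\mathfrak{h})+\mathfrak{s}^\perp$ gives $d\bar F|_{y'}(\mathfrak{h})=0$, so $y'\in\mathfrak{s}$ is the horizontal lift of $w$ and $\exp(t y')\cdot o=\exp(t w)\cdot o$ is a horizontal geodesic. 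Without this adjustment-and-splitting maneuver the totally geodesic claim, and hence the contradiction, is not established. (A minor point: take $\mathcal I$ finite and $S$ the identity component of $\bigcap_a T_a$ rather than $\exp\mathfrak{s}$, to guarantee $S\cdot o$ is a closed embedded torus.)
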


Theorem \ref{thm-4-2} makes sense for the term ``flat'' in these newly defined notations.
Compared to the classification of positively curved normal homogeneous Riemannian spaces, there exists a more intrinsic connection between the condition that there is flat
splitting subalgebra or FSCS, and the condition  that there is a commuting linearly independent pair in $\mathfrak{m}$ (we will simply refer to this as  Berger's condition because it first appears in Berger's classification work). In all the cases we will consider in this paper, FSCS or flat splitting subalgebra can be constructed using the  Berger's condition. But we can not find a proof for this fact without a case by case discussion.

For any $K$ in Theorem \ref{thm-4-2}, there is a natural projection $\pi:G/H_0\rightarrow G/K$. With respect to the normal homogeneous Finsler metric on $G/K$ induced by the same bi-invariant Finsler metric $\bar{F}$ as for $M=G/H$, this projection is also a submersion. We assert that the normal homogeneous space $G/H_0$ induced by the same metric
$\bar{F}$  is also positively curved. In fact, By Theorem \ref{theorem-3-2}, if $\dim G/K>1$, i.e.,
if $\dim G -\dim K >1$, then $G/K$
is positively curved with the normal homogeneous Finsler metric induced by the same $\bar{F}$ on $G$. On the other hand, the case
$\dim G-\dim K=1$ can not happen, since otherwise the positively curved manifold $G/H_0$ must be compact, hence  $G/K$ must be a circle. Then $G/H_0$ has an
infinite fundamental group,  which is a contradiction. This observation reduces
the proof of Theorem \ref{thm-4-2} to the case $K=H$.

\subsection{Proof of Theorem \ref{thm-4-2}}
As remarked in the above subsection, we only need to prove Theorem \ref{thm-4-2} for $K=H$.

Assume conversely that
there is a flat splitting subalgebra $\mathfrak{s}=\bigcap_{a\in \mathcal{I}}\mathfrak{t}_a$
for the family of Cartan subalgebras $\mathfrak{t}_a$, $a\in \mathcal{I}$. It is obvious that the set $\mathcal{I}$ can be chosen to be finite. For each $a\in \mathcal{I}$,
there is a a closed torus $T_a$ with $\mathrm{Lie}(T_a)=\mathfrak{t}_a$. Then the identity component $S$ of $\bigcap_{a\in \mathcal{I}}T_a$ is a closed
connected subgroup of $G$ with $\mathrm{Lie}(S)=\mathfrak{s}$. The orbit $S\cdot o$ is
a closed torus in $M$ with $\dim S\cdot o>1$.
Then the Finsler metric $F|_{S\cdot o}$ on the submanifold $S\cdot o$ is locally Minkowski, which has constant
$0$ flag curvature.

We now prove that $(S\cdot o, F|_{S\cdot o})$ is a totally geodesic submanifold in $(M,F)$. We need the following
lemma.

\begin{lemma}\label{lemma-4-3} Let $\rho_*: (\mathfrak{g},\langle\cdot,\cdot\rangle^{\bar{F}})\rightarrow
(\mathfrak{m},\langle\cdot,\cdot\rangle^F)$ be a submersion. Then the horizonal lift of
any vector of $\mathfrak{s}\cap\mathfrak{m}$ must be contained in $\mathfrak{s}$.
\end{lemma}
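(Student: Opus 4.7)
The plan is to exhibit the horizontal lift $\bar y$ explicitly as an element of $\mathfrak{s}$ by constructing a natural candidate inside $\mathfrak{s}$ and then verifying the characterization of horizontal lifts supplied by Proposition~\ref{prop-1}. Set $\bar y' = y + h^{*}$, where $h^{*} \in \mathfrak{s}\cap\mathfrak{h}$ uniquely minimizes the strictly convex function $h \mapsto \bar F(y+h)$ on the finite-dimensional subspace $\mathfrak{s}\cap\mathfrak{h}$. Then $\bar y' \in \mathfrak{s}$, and the first-order condition for this restricted minimization gives
\[
\langle \bar y',\,v\rangle_{\bar y'}^{\bar F}=0 \qquad\text{for every } v\in \mathfrak{s}\cap\mathfrak{h}.
\]
The task reduces to upgrading this orthogonality to all of $\mathfrak{h}$: once that is done, Proposition~\ref{prop-1} together with the uniqueness of the horizontal lift forces $\bar y = \bar y' \in \mathfrak{s}$.

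The main tool for the upgrade is $\mathrm{Ad}$-averaging over the centralizer $Z_G(\mathfrak{s})$. Because $\bar y'\in\mathfrak{s}$, every $g\in Z_G(\mathfrak{s})$ satisfies $\mathrm{Ad}(g)\bar y'=\bar y'$; combined with bi-invariance of $\bar F$ this makes the Hessian inner product $\langle\cdot,\cdot\rangle_{\bar y'}^{\bar F}$ invariant under the $\mathrm{Ad}(Z_G(\mathfrak{s}))$-action, so $\langle \bar y',\mathrm{Ad}(g)\xi\rangle_{\bar y'}^{\bar F}=\langle \bar y',\xi\rangle_{\bar y'}^{\bar F}$ for every $g\in Z_G(\mathfrak{s})$ and every $\xi\in\mathfrak{g}$. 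Integrating in $g$ against the normalized Haar measure on the compact group $Z_G(\mathfrak{s})$ yields
\[
\langle \bar y',\,\xi\rangle_{\bar y'}^{\bar F}=\langle \bar y',\,P\xi\rangle_{\bar y'}^{\bar F},
\]
where $P$ is the $B$-orthogonal projection of $\mathfrak{g}$ onto the $\mathrm{Ad}(Z_G(\mathfrak{s}))$-fixed subspace, $B$ denoting the bi-invariant inner product fixed at the outset (recall that $B$-orthogonal projection and averaging coincide because $B$ is $\mathrm{Ad}$-invariant).

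The step I expect to be the most delicate, and the one where both parts of the flat-splitting hypothesis genuinely enter, is to identify $P$ concretely enough to see that $P(\mathfrak{h}) \subset \mathfrak{s}\cap\mathfrak{h}$. Since $Z_G(\mathfrak{s})$ is connected (centralizer of a torus in a compact connected group), its fixed set in $\mathfrak{g}$ equals $Z_{\mathfrak{g}}(\mathfrak{z})$ with $\mathfrak{z}:=Z_{\mathfrak{g}}(\mathfrak{s})$. Every Cartan $\mathfrak{t}_a$ in the defining family of $\mathfrak{s}$ sits inside $\mathfrak{z}$, so anything centralizing $\mathfrak{z}$ must centralize each $\mathfrak{t}_a$ and hence lie in $\mathfrak{t}_a$ by self-centralization; intersecting gives $Z_{\mathfrak{g}}(\mathfrak{z}) \subset \bigcap_{a}\mathfrak{t}_a = \mathfrak{s}$, while $\mathfrak{s}\subset Z_{\mathfrak{g}}(\mathfrak{z})$ is immediate from $\mathfrak{s}$ being abelian. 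Thus $P$ is the $B$-orthogonal projection onto $\mathfrak{s}$, and then the splitting $\mathfrak{s}=(\mathfrak{s}\cap\mathfrak{h})\oplus(\mathfrak{s}\cap\mathfrak{m})$ together with $\mathfrak{h}\perp_B\mathfrak{m}$ forces $P(\mathfrak{h})\subset\mathfrak{s}\cap\mathfrak{h}$. Plugging any $\xi\in\mathfrak{h}$ into the averaged identity gives $\langle \bar y',\xi\rangle_{\bar y'}^{\bar F}=\langle \bar y',P\xi\rangle_{\bar y'}^{\bar F}=0$ by the first-order condition, closing the argument.
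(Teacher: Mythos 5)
Your proof is correct, and it takes a genuinely different route to the key orthogonality than the paper does. Both arguments begin identically: minimize $\bar F$ over the affine slice $y+(\mathfrak{s}\cap\mathfrak{h})$ to get a candidate $\bar y'\in\mathfrak{s}$ with $\langle\bar y',\,\mathfrak{s}\cap\mathfrak{h}\rangle_{\bar y'}^{\bar F}=0$. Where you diverge is in upgrading this to $\langle\bar y',\,\mathfrak{h}\rangle_{\bar y'}^{\bar F}=0$. The paper works directly with the derivative of $\bar F$: $\mathrm{Ad}(G)$-invariance forces $d\bar F$ at a point of each $\mathfrak{t}_a$ to annihilate $\mathfrak{t}_a^{\perp}$ (the orbit-tangent directions), so at $\bar y'\in\mathfrak{s}=\bigcap_a\mathfrak{t}_a$ it annihilates $\sum_a\mathfrak{t}_a^{\perp}=\mathfrak{s}^{\perp}$; combined with the minimization and the splitting inclusion $\mathfrak{h}=(\mathfrak{s}\cap\mathfrak{h})\oplus(\mathfrak{s}^{\perp}\cap\mathfrak{h})\subset(\mathfrak{s}\cap\mathfrak{h})+\mathfrak{s}^{\perp}$, this finishes. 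You instead pass to the Hessian inner product, average against the Haar measure of $Z_G(\mathfrak{s})$, and show that the associated fixed-point projection $P$ is the $B$-orthogonal projection onto $\mathfrak{s}$, after which $P(\mathfrak{h})\subset\mathfrak{s}\cap\mathfrak{h}$ follows from the same splitting. Your identification of the fixed set, $Z_{\mathfrak{g}}(\mathfrak{z})=\mathfrak{s}$ with $\mathfrak{z}=Z_{\mathfrak{g}}(\mathfrak{s})$, is where the intersection-of-Cartans hypothesis enters on your side, playing the role that the identity $\mathfrak{s}^{\perp}=\sum_a\mathfrak{t}_a^{\perp}$ plays in the paper. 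What you gain: the averaging handles non-regular $\bar y'$ automatically, whereas the paper's directional-derivative claim at non-regular points of $\mathfrak{t}_a$ tacitly needs a density/continuity argument (regular elements are dense, $d\bar F$ is continuous off the origin). What it costs: you invoke connectedness of $Z_G(\mathfrak{s})$ (a correct but nontrivial fact in compact Lie theory), and the derivation of $\mathfrak{s}=Z_{\mathfrak{g}}(Z_{\mathfrak{g}}(\mathfrak{s}))$ is a longer chain than the paper's one-line $\mathfrak{s}^{\perp}=\sum_a\mathfrak{t}_a^{\perp}$. One cosmetic remark: the closing sentence of step (5), ``$\mathfrak{s}\subset Z_{\mathfrak{g}}(\mathfrak{z})$ is immediate from $\mathfrak{s}$ being abelian,'' is really immediate from the definition of the centralizer ($[\mathfrak{s},Z_{\mathfrak{g}}(\mathfrak{s})]=0$ by construction); abelianness of $\mathfrak{s}$ is not the point there.
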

\begin{proof}
Since $\bar{F}(e,\cdot)$ is $\mathrm{Ad}(G)$-invariant, for any $a\in \mathcal{I}$, and at any
$y\in\mathfrak{t}_a\backslash\{0\}$, the derivative of $\bar{F}$ vanishes in the directions of $\mathfrak{t}_a^{\perp}$ (with respect to the chosen bi-invariant
inner product on $\mathfrak{g}$).
So at any nonzero vector $y\in\mathfrak{s}=\bigcap_{a\in\mathcal{I}}\mathfrak{t}_a$,
$\bar{F}$ vanishes in all the directions of
$\mathfrak{s}^\perp=\sum_{a\in\mathcal{I}}
\mathfrak{t}_a^{\perp}$.
Now we consider the restriction of the Minkowski norm $\bar{F}(e,\cdot)$ to
$\mathfrak{s}$. For any nonzero vector $y$ in $\mathfrak{s}$,
there exists a unique $y'\in y+\mathfrak{s}\cap\mathfrak{h}\subset \mathfrak{s}$, such
that the derivative of $\bar{F}(e,\cdot)$ vanishes in the directions of
$\mathfrak{s}\cap\mathfrak{h}$. Then at $y'$, the derivative of $\bar{F}(e,\cdot)$ vanishes
in all the directions of $\mathfrak{s}\cap\mathfrak{h}
+\mathfrak{s}^\perp$, since $\mathfrak{s}$ is splitting, and
\begin{equation}
\mathfrak{h}=\mathfrak{s}\cap\mathfrak{h}+
\mathfrak{s}^\perp\cap\mathfrak{h}\subset
\mathfrak{s}\cap\mathfrak{h}
+\mathfrak{s}^\perp.
\end{equation}
Therefore  $y'$ is the
horizonal lift of $y$ with respect to the submersion
$\rho_*:(\mathfrak{g},\bar{F}(e,\cdot))\rightarrow (\mathfrak{m},F(o,\cdot))$.
\end{proof}

For any nonzero tangent vector
$y\in T_o (S\cdot o)=\mathfrak{s}\cap\mathfrak{m}$, the curve $\exp (ty)\cdot o$ is the geodesic
of $(S\cdot o, F|_{S\cdot o})$ with initial vector  $y$. With respect to the projection $\rho$, it is covered by $\exp(ty')\cdot o$, which is a horizonal geodesic of $(G,\bar{F})$. So
$\exp(ty)\cdot o$ is also a geodesic of $(M,F)$. This proves that $(S\cdot o,F|_{S\cdot o})$
is totally geodesic in $(M,F)$. Now by Proposition \ref{prop-2-2}, the flag curvature of
$(M,F)$ for any tangent plane of $S\cdot o$ (which does exist due to the dimension condition) vanishes. This implies that $(M,F)$ is not positively curved, which is a contradiction. This completes the proof of Theorem \ref{thm-4-2}.

\begin{remark}
Theorem \ref{thm-4-2} can also be proven using a flag curvature formula in \cite{XDHH2014} for Finslerian submersions.
\end{remark}

\section{Classification of positively curved normal homogeneous spaces}

In this section we will develop the techniques to classify normal homogeneous Finsler spaces with positive flag curvature.
\subsection{Equivalence between normal homogeneous spaces}


As  flag curvature is only relevant to the local geometric properties of a Finsler space, we will
not distinguish normal homogeneous spaces $M_1=G_1/H_1$ and $M_2=G_2/H_2$ which are induced by the bi-invariant Finsler metrics $\bar{F}_i$ on $G_i$, respectively, in the following cases:

\begin{description}
\item{\rm (1)} $G_2$ is a covering group of $G_1$ with $\bar{F}_2$ induced from $\bar{F}_1$, and $H_2$ has the same identity component as $H_1$.
\item{\rm (2)} $G_1=G_2\times G_3$, $H_1=H_2\times G_3$, and there is a bi-invariant metric
$\bar{F}_3$ on $G_3$ and a real function $f$ on $\mathbb{R}^2$ which is  positively homogeneous of degree $1$, such that $\bar{F}_1=f(\bar{F}_2,\bar{F}_3)$.
\item{\rm (3)} There is an isomorphism $\phi$ between $G_1$ and $G_2$ which maps $H_1$ onto $H_2$,
and  $\phi^* (\bar{F}_2)=\bar{F}_1$.
\end{description}

We will call two normal homogeneous spaces $M_1=G_1/H_1$ and $M_2=G_2/H_2$
{\it equivalent}, if
there are a finite sequence of normal homogeneous spaces connecting $M_1$ and $M_2$, such
that each adjacent pair satisfies  the conditions in one of the above three cases.

This equivalence provides a lot of convenience for us. Since
we only need to study the classification of positively
curved Finsler spaces up to equivalence,
we can assume that the Lie group $G$ for the
 normal homogeneous space $M=G/H$ to be simply connected (which means that $G$ may have an Euclidean factor and may be non-compact) and the subgroup $H$ to be connected. Moreover, in some special cases, we can also reduce $G$ and $H$ to their appropriate subgroups, or replace $H$ with its image under an automorphism of $G$. Sometimes this method can  give the normal homogeneous space the most standard and recognizable presentation.

\subsection{Some general classification results}

Keep all the notations as above. We call the dimension of the Cartan subalgebra in $\mathfrak{g}$ the rank of $G$ (or $\mathfrak{g}$), denoted as $\mathrm{rk}G$
(or $\mathrm{rk}\mathfrak{g}$). We will also call the maximal dimension of the commutative  subalgebras in $\mathfrak{m}$ the rank of $M$.

The following two propositions are direct corollaries of  Theorem \ref{thm-4-2}.

\begin{proposition}\label{prop-5-1}
Let $M=G/H$ be a positively curved normal homogeneous space.  Then either $\mathrm{rk}G=\mathrm{rk}H$ or
$\mathrm{rk}G=\mathrm{rk}H+1$.
\end{proposition}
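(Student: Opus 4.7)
The plan is to prove the contrapositive. Assuming $\mathrm{rk}G-\mathrm{rk}H\geq 2$, I will construct a flat splitting Cartan subalgebra of $\mathfrak{g}$ with respect to the pair $(\mathfrak{g},\mathfrak{h})$, contradicting Theorem \ref{thm-4-2}. Since $\mathrm{rk}H\leq\mathrm{rk}G$ always holds for a closed subgroup (a maximal torus of $H$ extends to a maximal torus of $G$), this dichotomy suffices. Using the equivalence of Section 5.1, we may pass to a simply connected cover so that $\mathfrak{g}$ is a compact-type Lie algebra and $\mathrm{ad}(x)$ is skew-symmetric with respect to the chosen bi-invariant inner product for every $x\in\mathfrak{g}$.

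For the construction, first pick a Cartan subalgebra $\mathfrak{t}_H$ of $\mathfrak{h}$, and let $\mathfrak{t}$ be any Cartan subalgebra of $\mathfrak{g}$ containing $\mathfrak{t}_H$; such a $\mathfrak{t}$ exists by extending $\mathfrak{t}_H$ to a maximal abelian subalgebra of its centralizer $\mathfrak{z}:=Z_{\mathfrak{g}}(\mathfrak{t}_H)$. Because $\mathrm{ad}(\mathfrak{t}_H)$ acts semisimply on $\mathfrak{g}$ and preserves the orthogonal decomposition $\mathfrak{g}=\mathfrak{h}+\mathfrak{m}$, the centralizer itself decomposes orthogonally as
\[
\mathfrak{z}=(\mathfrak{z}\cap\mathfrak{h})\oplus(\mathfrak{z}\cap\mathfrak{m})=\mathfrak{t}_H\oplus\mathfrak{m}^{\mathfrak{t}_H},
\]
where I used the self-centralizing property of $\mathfrak{t}_H$ inside $\mathfrak{h}$ to identify $\mathfrak{z}\cap\mathfrak{h}=\mathfrak{t}_H$. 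Since $\mathfrak{t}$ is abelian it lies in $\mathfrak{z}$, and any $z\in\mathfrak{t}$ decomposes uniquely as $z=z_1+z_2$ with $z_1\in\mathfrak{t}_H$ and $z_2\in\mathfrak{m}^{\mathfrak{t}_H}$; since $z_1\in\mathfrak{t}_H\subset\mathfrak{t}$, we get $z_2=z-z_1\in\mathfrak{t}\cap\mathfrak{m}$, yielding
\[
\mathfrak{t}=\mathfrak{t}_H\oplus(\mathfrak{t}\cap\mathfrak{m}),\qquad \mathfrak{t}\cap\mathfrak{h}=\mathfrak{t}_H.
\]
Hence $\dim(\mathfrak{t}\cap\mathfrak{m})=\mathrm{rk}G-\mathrm{rk}H\geq 2$, and $\mathfrak{t}$ itself (realized as the trivial intersection coming from the one-element family $\{\mathfrak{t}\}$) satisfies all three defining conditions of an FSCS with respect to $(\mathfrak{g},\mathfrak{h})$. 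Theorem \ref{thm-4-2} applied with $K=H$ then yields the desired contradiction.

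The main obstacle is the splitting identity $\mathfrak{z}=\mathfrak{t}_H\oplus\mathfrak{m}^{\mathfrak{t}_H}$, which depends on $\mathrm{ad}(\mathfrak{t}_H)$ acting diagonalizably on $\mathfrak{g}$ so that the zero-weight space coincides with the centralizer; this in turn relies on $\mathrm{ad}(x)$ being skew-symmetric for the bi-invariant inner product, and on the orthogonal complement of an $\mathrm{ad}$-invariant subspace being itself $\mathrm{ad}$-invariant. Once this structural fact is in place, the rest of the argument is short and requires no case analysis, the content being fully absorbed into the reduction provided by Theorem \ref{thm-4-2}.
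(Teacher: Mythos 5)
Your proof is correct and takes essentially the same route as the paper: the authors also argue that from any Cartan subalgebra of $\mathfrak{h}$ one can build a splitting Cartan subalgebra of $\mathfrak{g}$, which under the assumption $\mathrm{rk}\,G>\mathrm{rk}\,H+1$ is an FSCS contradicting Theorem \ref{thm-4-2}. The paper leaves the construction implicit; your write-up supplies the detail (the centralizer decomposition $\mathfrak{z}=\mathfrak{t}_H\oplus\mathfrak{m}^{\mathfrak{t}_H}$ and the consequent $\mathfrak{t}=\mathfrak{t}_H\oplus(\mathfrak{t}\cap\mathfrak{m})$), which is exactly the justification the paper's terse statement presupposes.
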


\begin{proposition}\label{prop-5-2}
Let $M=G/H$ be a positively curved symmetric normal homogeneous space. Then $M$ is positively curved if and only if it is of rank $1$.
\end{proposition}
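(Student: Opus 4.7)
The plan is to derive Proposition \ref{prop-5-2} as a direct consequence of Theorem \ref{thm-4-2}, together with the standard classification of rank-one Riemannian symmetric coset spaces. For the easy direction, suppose $M=G/H$ is symmetric and of rank $1$. Then $(G,H)$ is a compact Riemannian symmetric pair of rank one, so up to the equivalence introduced in Section 5, $M$ is one of $S^n$, $\mathbb{C}\mathrm{P}^n$, $\mathbb{H}\mathrm{P}^n$, or $\mathbb{O}\mathrm{P}^2$. On each of these the isotropy representation is transitive on the unit sphere of $\mathfrak{m}$, so every $G$-invariant Finsler metric is automatically Riemannian (the result from \cite{DE12} quoted in the introduction). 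Thus the normal homogeneous Finsler metric $F$ coincides with the standard normal Riemannian metric on a rank-one symmetric space, which has positive sectional curvature, hence positive flag curvature.

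For the converse I will argue by contradiction: assume the rank of $M$ is at least $2$, and manufacture a flat splitting subalgebra so that Theorem \ref{thm-4-2} applies. Pick a maximal abelian subspace $\mathfrak{a}\subset\mathfrak{m}$ with $\dim\mathfrak{a}\geq 2$. Since $(\mathfrak{g},\mathfrak{h})$ is symmetric, $[\mathfrak{m},\mathfrak{m}]\subset\mathfrak{h}$, and the maximality of $\mathfrak{a}$ in $\mathfrak{m}$ forces $Z_\mathfrak{g}(\mathfrak{a})\cap\mathfrak{m}=\mathfrak{a}$. Hence the centralizer decomposes as
\begin{equation*}
Z_\mathfrak{g}(\mathfrak{a})=\mathfrak{a}\oplus Z_\mathfrak{h}(\mathfrak{a}),
\end{equation*}
a reductive (Levi) subalgebra of $\mathfrak{g}$. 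Take $\mathfrak{s}$ to be the center of this Levi, which equals $\mathfrak{a}\oplus Z(Z_\mathfrak{h}(\mathfrak{a}))$. Then $\mathfrak{s}$ is commutative, it splits as $(\mathfrak{s}\cap\mathfrak{h})+(\mathfrak{s}\cap\mathfrak{m})$ with $\mathfrak{s}\cap\mathfrak{m}=\mathfrak{a}$, and $\dim(\mathfrak{s}\cap\mathfrak{m})\geq 2$.

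The step I expect to require the most care is the first condition in the definition of a flat splitting subalgebra, namely that $\mathfrak{s}$ is the intersection of a family of Cartan subalgebras of $\mathfrak{g}$. For this I will invoke two standard facts from the theory of reductive Lie algebras: (i) the Cartan subalgebras of $\mathfrak{g}$ containing the toral subalgebra $\mathfrak{a}$ are precisely the Cartan subalgebras of the Levi subalgebra $Z_\mathfrak{g}(\mathfrak{a})$, and (ii) the intersection of all Cartan subalgebras of a reductive Lie algebra is its center. Taken together these identify $\mathfrak{s}=Z(Z_\mathfrak{g}(\mathfrak{a}))$ with the intersection of the family of Cartan subalgebras of $\mathfrak{g}$ containing $\mathfrak{a}$, which is finite up to conjugation by $Z_G(\mathfrak{a})$ and can be reduced to a finite family by the argument in the proof of Theorem \ref{thm-4-2}. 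Once $\mathfrak{s}$ is recognised as a flat splitting subalgebra, Theorem \ref{thm-4-2} asserts that $M$ cannot carry a positively curved normal homogeneous Finsler metric, giving the desired contradiction and completing the proof.
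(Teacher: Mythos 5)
Your proof is correct and rests on the same key ideas as the paper's: choose a maximal abelian subspace $\mathfrak{a}\subset\mathfrak{m}$ of dimension $\geq 2$, use the symmetric condition $[\mathfrak{m},\mathfrak{m}]\subset\mathfrak{h}$ to force a splitting of $Z_\mathfrak{g}(\mathfrak{a})=\mathfrak{a}\oplus Z_\mathfrak{h}(\mathfrak{a})$, and then invoke Theorem \ref{thm-4-2} to get a contradiction. The one genuine difference is the choice of the ``flat'' object you feed into Theorem \ref{thm-4-2}. You take $\mathfrak{s}=Z(Z_\mathfrak{g}(\mathfrak{a}))$, a flat splitting subalgebra, which forces you to verify condition (1) of the definition (that $\mathfrak{s}$ is an intersection of Cartan subalgebras) via the two auxiliary facts you quote about reductive Lie algebras. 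The paper simply takes any Cartan subalgebra $\mathfrak{t}$ of $\mathfrak{g}$ containing $\mathfrak{a}$ and observes it is automatically an FSCS: condition (1) is vacuous for a single Cartan subalgebra, the splitting $\mathfrak{t}=\mathfrak{a}\oplus(\mathfrak{t}\cap Z_\mathfrak{h}(\mathfrak{a}))$ follows from the same decomposition of $Z_\mathfrak{g}(\mathfrak{a})$ that you use, and $\mathfrak{t}\cap\mathfrak{m}=\mathfrak{a}$ has dimension $\geq 2$. That is noticeably shorter, since it dispenses with the detour through centers of Levi subalgebras and the finiteness remark; you could streamline your argument to match by replacing $Z(Z_\mathfrak{g}(\mathfrak{a}))$ with any Cartan of $Z_\mathfrak{g}(\mathfrak{a})$. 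The paper also records an alternative second proof (that $(M,F)$ is Berwald with the same connection and curvature as a Riemannian normal space, citing \cite{DE12}), which is a different mechanism entirely; your version corresponds to the paper's first, Lie-theoretic proof.
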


Recall that the normal homogeneous space $G/H$ is symmetric if and only if in the decomposition $\mathfrak{g}=\mathfrak{h}+\mathfrak{m}$, we have $[\mathfrak{m}, \mathfrak{m}]\subseteq\mathfrak{h}$. If $G/H$ is a positively curved normal homogeneous space with $\mathrm{rk}G>\mathrm{rk}H+1$, then we can construct a splitting Cartan
subalgebra of $\mathfrak{g}$ from any Cartan subalgebra of $\mathfrak{h}$, which
must be a FSCS by the assumption $\mathrm{rk}G>
\mathrm{rk}H+1$. This is a contradiction to Theorem \ref{thm-4-2}.
In \cite{XDHH2014}, we present another proof for the same conclusion where we only require $M$
to be a positive curved homogeneous Finsler space.

Now we prove Proposition \ref{prop-5-2}. In fact,   otherwise there exists a maximal commutative subalgebra $\mathfrak{a}$
in $\mathfrak{m}$ with $\dim\mathfrak{a}>1$. Then the symmetric condition implies that any Cartan subalgebra of $\mathfrak{g}$ containing $\mathfrak{a}$ is a FSCS, which is a contradiction.
Proposition \ref{prop-5-2} can also be proven by arguing that
$(M,F)$ is Berwald with the same connection and Riemannian curvature as a Riemannian
normal homogeneous space; see \cite{DE12}.

The positively curved normal homogeneous Finsler spaces with $\mathrm{rk}G=\mathrm{rk}H$ and $\mathrm{rk}G=\mathrm{rk}H+1$ will be
discussed  in the next two sections.

\subsection{Some notations and techniques}
We now summarize some basic facts on compact Lie groups from \cite{HE78}, and develop some techniques for our classification.

Assume that the normal homogeneous space $M=G/H$ is connected, simply connected and positively curved, $G$ is connected and simply connected and $H$ is connected. Fix
an orthogonal decomposition $\mathfrak{g}=\mathfrak{h}+\mathfrak{m}$ with respect
to a given bi-invariant inner product on $\mathfrak{g}$.

Fix a Cartan subalgebra $\mathfrak{t}$ such that $\mathfrak{t}\cap\mathfrak{h}$ has
 maximal dimension, i.e., $\mathfrak{t}\subset\mathfrak{h}$ if $\mathrm{rk}G=\mathrm{rk}H$, or $\mathfrak{t}\cap\mathfrak{h}$ is a subspace of $\mathfrak{t}$ with  codimension $1$ if $\mathrm{rk}G=\mathrm{rk}H+1$.
In the following, unless otherwise stated, the notations  (e.g.,  roots, root systems, root planes, etc.) will be defined with respect to
$\mathfrak{t}$ when concerning $\mathfrak{g}$ or its subalgebras containing $\mathfrak{t}$, or to $\mathfrak{t}\cap\mathfrak{h}$ when concerning $\mathfrak{h}$.

With respect to $\mathfrak{t}$, we have a decomposition
\begin{equation}
\mathfrak{g}=\mathfrak{t}+\sum_{\alpha\in\Delta^+_\mathfrak{g}}\mathfrak{g}_{\pm\alpha}.
\end{equation}
The $2$-dimensional subspace $\mathfrak{g}_{\pm\alpha}$ is called the root plane of
$\mathfrak{g}$. Through
the given bi-invariant inner product, any root $\alpha$ can be naturally identified with a vector in $\mathfrak{t}$, which generates
$[\mathfrak{g}_{\pm\alpha},\mathfrak{g}_{\pm\alpha}]$.
The bracket relation between different root planes is given by the following well known formula,
\begin{equation}\label{0003}
[\mathfrak{g}_{\pm\alpha},\mathfrak{g}_{\pm\beta}]=\mathfrak{g}_{\pm(\alpha+\beta)}+
\mathfrak{g}_{\pm(\alpha-\beta)},
\end{equation}
where $\mathfrak{g}_{\pm\alpha}$ and $\mathfrak{g}_{\pm\beta}$ are different root planes, i.e. $\alpha$ and $\beta$ are linearly independent, and
each term of the right side can be $0$ when the corresponding vectors are not roots of $\mathfrak{g}$.

We now assert that if the right side of (\ref{0003}) is $2$-dimensional,
then for any nonzero vector $v$ of $\mathfrak{g}_{\pm\alpha}$, $\mathrm{ad}(v)$ maps
$\mathfrak{g}_{\pm\beta}$ isomorphically onto the right side. In fact, assume that $\alpha+\beta$ is a root but $\alpha-\beta$ is not. Then for any $u\in (\alpha+\beta)^{\perp}$ which is not perpendicular to either $\alpha$ or $\beta$, we have
\begin{equation}
[[u,v],\mathfrak{g}_{\pm\beta}]=[v,[u,\mathfrak{g}_{\pm\beta}]]=[v,\mathfrak{g}_{\pm\beta}].
\end{equation}
As $\mathfrak{g}_{\pm\alpha}$ is linearly generated by $v$ and $[u,v]$,
$[v,\mathfrak{g}_{\pm\beta}]=[\mathfrak{g}_{\pm\alpha},\mathfrak{g}_{\pm\beta}]$ is two
dimensional, that is, $\mathrm{ad}(v)$ is an isomorphism from $\mathfrak{g}_{\pm\beta}$ onto
$\mathfrak{g}_{\pm(\alpha+\beta)}$.

The above discussion can be summarized as the following lemma.
\begin{lemma}
Keep all the notations as above.
\begin{description}\label{trick-lemma-0}
\item{\rm (1)} For any root $\alpha$ of $\mathfrak{g}$, $[\mathfrak{g}_{\pm\alpha},
\mathfrak{g}_{\pm\alpha}]$ is the line in $\mathfrak{t}$ spanned by the vector which is dual to $\alpha$ with respect to the chosen bi-invariant inner product.
\item{\rm (2)} Let $\alpha$ and $\beta$ be two linearly independent roots  of $\mathfrak{g}$. If neither $\alpha+\beta$
nor $\alpha-\beta$ is a  root of $\mathfrak{g}$, then
$[\mathfrak{g}_{\pm\alpha},\mathfrak{g}_{\pm\beta}]$ is equal to $0$; If $\alpha+\beta$ (resp. $\alpha-\beta)$) is a root, but the other is not,  then $[\mathfrak{g}_{\pm\alpha},\mathfrak{g}_{\pm\beta}]$ is equal to the $2$-dimensional root plane
$\mathfrak{g}_{\pm(\alpha+\beta)}$ (resp. $\mathfrak{g}_{\pm(\alpha-\beta)}$); If both $\alpha\pm\beta$ are  roots of $\mathfrak{g}$, then $[\mathfrak{g}_{\pm\alpha},\mathfrak{g}_{\pm\beta}]$ is equal to
  the $4$-dimensional sum $\mathfrak{g}_{\pm(\alpha+\beta)}+\mathfrak{g}_{\pm(\alpha-\beta)}$.
\item{\rm (3)} In (2), if $[\mathfrak{g}_{\pm\alpha},\mathfrak{g}_{\pm\beta}]$ is
$2$-dimensional, then for any nonzero vector $v\in\mathfrak{g}_{\pm\alpha}$, the linear map $\mathrm{ad}(v)$
 is an isomorphism from $\mathfrak{g}_{\pm\beta}$  onto $[\mathfrak{g}_{\pm\alpha},\mathfrak{g}_{\pm\beta}]$.
\end{description}
\end{lemma}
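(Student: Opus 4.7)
The plan is to deduce all three parts by translating the standard complex root space decomposition of $\mathfrak{g}^{\mathbb{C}}$ into statements about the real root planes $\mathfrak{g}_{\pm\alpha}$ of the compact form $\mathfrak{g}$. The complexification of $\mathfrak{g}_{\pm\alpha}$ is $\mathfrak{g}^{\mathbb{C}}_{\alpha}\oplus\mathfrak{g}^{\mathbb{C}}_{-\alpha}$, and complex conjugation interchanges $\mathfrak{g}^{\mathbb{C}}_{\alpha}$ and $\mathfrak{g}^{\mathbb{C}}_{-\alpha}$ with $\mathfrak{g}_{\pm\alpha}$ recovered as the fixed real form. Since brackets commute with complexification and with conjugation, computing $[\mathfrak{g}^{\mathbb{C}}_{\pm\alpha},\mathfrak{g}^{\mathbb{C}}_{\pm\beta}]$ on the complex side and then intersecting with $\mathfrak{g}$ will give the corresponding real bracket.

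For part (1), apply the classical identity $[\mathfrak{g}^{\mathbb{C}}_{\alpha},\mathfrak{g}^{\mathbb{C}}_{-\alpha}]=\mathbb{C}H_{\alpha}$, where $H_{\alpha}\in\mathfrak{t}$ is dual to $\alpha$ under the fixed bi-invariant inner product; intersecting with $\mathfrak{g}$ yields $\mathbb{R}H_{\alpha}$. For part (2), for linearly independent roots $\alpha,\beta$, expand using $[\mathfrak{g}^{\mathbb{C}}_{\epsilon},\mathfrak{g}^{\mathbb{C}}_{\delta}]\subseteq\mathfrak{g}^{\mathbb{C}}_{\epsilon+\delta}$, with equality exactly when $\epsilon+\delta$ is a root. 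The four sign choices on $(\pm\alpha,\pm\beta)$ land in the four complex root spaces at $\pm(\alpha+\beta)$ and $\pm(\alpha-\beta)$; taking real forms turns a surviving pair of conjugate lines into a $2$-dimensional root plane. This gives the stated trichotomy $0$, $2$, or $4$ dimensions.

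For part (3), I will formalize the sketch preceding the lemma. Assume without loss of generality that $\alpha+\beta$ is a root but $\alpha-\beta$ is not, so $[\mathfrak{g}_{\pm\alpha},\mathfrak{g}_{\pm\beta}]=\mathfrak{g}_{\pm(\alpha+\beta)}$ has dimension $2$. Fix $v\ne 0$ in $\mathfrak{g}_{\pm\alpha}$ and pick $u\in\mathfrak{t}$ with $\langle u,\alpha+\beta\rangle=0$ but $\langle u,\alpha\rangle\ne 0$ and $\langle u,\beta\rangle\ne 0$; such a $u$ exists since $\alpha$ and $\beta$ are linearly independent and $\alpha+\beta\ne\alpha,\beta$. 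Since $u\in\mathfrak{t}$, $\mathrm{ad}(u)$ preserves each root plane and acts as an invertible linear map on $\mathfrak{g}_{\pm\alpha}$ and $\mathfrak{g}_{\pm\beta}$, while annihilating $\mathfrak{g}_{\pm(\alpha+\beta)}$. The Jacobi identity then yields $[[u,v],\,\mathfrak{g}_{\pm\beta}]=[v,\,\mathfrak{g}_{\pm\beta}]$ in $\mathfrak{g}_{\pm(\alpha+\beta)}$. Since $v$ and $[u,v]$ are linearly independent (they are eigenvectors with different eigenvalues of $\mathrm{ad}(u)$ on the complexified root plane), they span $\mathfrak{g}_{\pm\alpha}$, so $[v,\mathfrak{g}_{\pm\beta}]=[\mathfrak{g}_{\pm\alpha},\mathfrak{g}_{\pm\beta}]$ is already $2$-dimensional; hence $\mathrm{ad}(v)\colon\mathfrak{g}_{\pm\beta}\to\mathfrak{g}_{\pm(\alpha+\beta)}$ is surjective between equidimensional spaces, hence an isomorphism.

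The main subtlety is that the real root planes are $2$-dimensional rather than $1$-dimensional, so part (3) does not follow directly from $[\mathfrak{g}^{\mathbb{C}}_{\alpha},\mathfrak{g}^{\mathbb{C}}_{\beta}]=\mathfrak{g}^{\mathbb{C}}_{\alpha+\beta}$; a single $v\in\mathfrak{g}_{\pm\alpha}$ corresponds to a pair of conjugate complex vectors and one must rule out the possibility that $\mathrm{ad}(v)$ has a kernel inside $\mathfrak{g}_{\pm\beta}$. The Jacobi trick with the auxiliary $u\in\mathfrak{t}$ supplies the second linearly independent element $[u,v]\in\mathfrak{g}_{\pm\alpha}$ needed to force the image to be the full $2$-plane, and this is the only place where a bit of care beyond citing standard root system theory is required.
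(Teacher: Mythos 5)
Your proof is correct and follows essentially the same route as the paper: parts (1) and (2) are the standard root-space facts (you make the complexification explicit where the paper simply cites them as well known), and your part (3) reproduces the paper's Jacobi-identity trick with an auxiliary $u\in\mathfrak{t}$ orthogonal to $\alpha+\beta$ but not to $\alpha$ or $\beta$. One small imprecision: $v$ and $[u,v]$ are \emph{not} eigenvectors of $\mathrm{ad}(u)$ (a nonzero skew-symmetric operator on the real $2$-plane $\mathfrak{g}_{\pm\alpha}$ has purely imaginary eigenvalues and no real eigenvectors); their linear independence instead follows because such an operator is a nonzero rotation, so $[u,v]$ is a nonzero vector orthogonal to $v$ with respect to the bi-invariant inner product.
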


When $\mathrm{rk}G=\mathrm{rk}H$, the roots and root planes of $\mathfrak{h}$ are also those
of $\mathfrak{g}$. Any root plane $\mathfrak{g}_{\pm\alpha}$ of
$\mathfrak{g}$ is either contained in $\mathfrak{h}$ (when $\alpha$ is a root of $\mathfrak{h}$), or contained in $\mathfrak{m}$ (when $\alpha$ is not a root of $\mathfrak{h}$). For simplicity, in the later case we will say that $\alpha$ is a root of $\mathfrak{m}$.

When $\mathrm{rk}G=\mathrm{rk}H+1$, the roots of $\mathfrak{h}$ can be naturally
identified with vectors on $\mathfrak{t}\cap\mathfrak{h}$ through the restriction of the given bi-invariant
inner product to $\mathfrak{t}\cap\mathfrak{h}$. Let $\mathrm{pr}$ be the orthogonal projection from $\mathfrak{t}$ to
 $\mathfrak{t}\cap\mathfrak{h}$. Then for any root $\alpha'\in\Delta^+_\mathfrak{h}$, the root plane
$\mathfrak{h}_{\pm\alpha'}$ is contained in
\begin{equation}
\hat{\mathfrak{g}}_{\pm\alpha'}=\sum_{\mathrm{pr}(\alpha)=\alpha'}\mathfrak{g}_{\pm\alpha}.
\end{equation}
In particular, if $\mathrm{pr}(\alpha)$ is not a root of $\mathfrak{h}$ for the root $\alpha$
of $\mathfrak{g}$, then $\mathfrak{g}_{\pm\alpha}\subset\mathfrak{m}$. In this case,   we  say that $\pm\alpha$ are roots of $\mathfrak{m}$.

The following lemmas will be used repeatedly in our later discussion.
\begin{lemma}\label{trick-lemma-1}
Assume that $M=G/H$ is a positively curved normal homogeneous Finsler space with
$\mathrm{rk}G=\mathrm{rk}H$.
\begin{description}
\item{\rm (1)} Let   $\alpha$ and $\beta$ be two roots of $\mathfrak{m}$ with $\alpha\neq\pm\beta$. Then either $\alpha+\beta$
or $\alpha-\beta$ is a root of $\mathfrak{g}$.
\item{\rm (2)} Let $\alpha$ and $\beta$ be two roots of $\mathfrak{m}$ with   angle  $\frac{\pi}{3}$
or $\frac{2\pi}{3}$, and suppose $\alpha$ and $\beta$ do not belong to a $G_2$-factor of $\mathfrak{g}$.
Then
either $\alpha+\beta$ or $\alpha-\beta$ is a root of $\mathfrak{h}$.
\end{description}
\end{lemma}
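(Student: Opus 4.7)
The plan is to prove both parts by deriving a contradiction from the existence of a flat splitting Cartan subalgebra (FSCS) with respect to $(\mathfrak{g},\mathfrak{h})$: since $M=G/H$ is positively curved, Theorem \ref{thm-4-2} forbids such a subalgebra. I will use throughout that $\mathrm{rk}G=\mathrm{rk}H$ forces $\mathfrak{t}\subset\mathfrak{h}$, and write $H_\alpha\in\mathfrak{t}$ for the vector dual to a root $\alpha$ under the chosen bi-invariant inner product. Recall that $H\in\mathfrak{t}$ commutes with the root plane $\mathfrak{g}_{\pm\alpha}$ if and only if $(H,H_\alpha)=0$.

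For (1), suppose toward contradiction that neither $\alpha+\beta$ nor $\alpha-\beta$ is a root of $\mathfrak{g}$. By Lemma \ref{trick-lemma-0}(2) this gives $[\mathfrak{g}_{\pm\alpha},\mathfrak{g}_{\pm\beta}]=0$. Fix any nonzero $u\in\mathfrak{g}_{\pm\alpha}\subset\mathfrak{m}$ and $v\in\mathfrak{g}_{\pm\beta}\subset\mathfrak{m}$, and set $\mathfrak{t}_0=\{H\in\mathfrak{t}:(H,H_\alpha)=(H,H_\beta)=0\}$, which has dimension $\mathrm{rk}G-2$ and lies in $\mathfrak{h}$. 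The subspace
\begin{equation*}
\mathfrak{t}'=\mathfrak{t}_0+\mathbb{R}u+\mathbb{R}v
\end{equation*}
is abelian (elements of $\mathfrak{t}_0$ kill both $u$ and $v$, and $[u,v]=0$), of dimension $\mathrm{rk}G$, hence a Cartan subalgebra of $\mathfrak{g}$. It splits as $\mathfrak{t}'\cap\mathfrak{h}=\mathfrak{t}_0$ and $\mathfrak{t}'\cap\mathfrak{m}=\mathbb{R}u+\mathbb{R}v$, so $\dim(\mathfrak{t}'\cap\mathfrak{m})=2>1$ and $\mathfrak{t}'$ is a FSCS, contradicting Theorem \ref{thm-4-2}.

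For (2), by part (1) at least one of $\alpha\pm\beta$ is a root of $\mathfrak{g}$. Since the angle is $\pi/3$ or $2\pi/3$ and $\alpha,\beta$ are not in a $G_2$-factor, they have the same length, and the standard $\alpha$-string bound (strings have length $\leq 2$ among same-length roots outside $G_2$) together with $p-q=2(\beta,\alpha)/(\alpha,\alpha)=\pm1$ forces exactly one of $\alpha\pm\beta$ to be a root of $\mathfrak{g}$; call it $\gamma$. Assume for contradiction that $\gamma$ is not a root of $\mathfrak{h}$, so $\mathfrak{g}_{\pm\gamma}\subset\mathfrak{m}$. Then
\begin{equation*}
\mathfrak{g}_1=\mathbb{R}H_\alpha+\mathbb{R}H_\beta+\mathfrak{g}_{\pm\alpha}+\mathfrak{g}_{\pm\beta}+\mathfrak{g}_{\pm\gamma}
\end{equation*}
is a subalgebra isomorphic to $\mathfrak{su}(3)$, with Cartan $\mathfrak{t}_1=\mathbb{R}H_\alpha+\mathbb{R}H_\beta\subset\mathfrak{h}$ and the six-dimensional ``off-Cartan'' part lying entirely in $\mathfrak{m}$. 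The key ingredient is a special property of $\mathfrak{su}(3)$: conjugating the diagonal Cartan by (a unitary scalar multiple of) the Fourier matrix of $\mathbb{Z}/3$ produces a Cartan subalgebra $\mathfrak{t}'_1\subset\mathfrak{g}_1$ whose elements are all strictly off-diagonal, that is, $\mathfrak{t}'_1\subset\mathfrak{g}_1\cap\mathfrak{m}$. Put
\begin{equation*}
\mathfrak{t}'=\mathfrak{t}'_1+(\mathfrak{t}_1^\perp\cap\mathfrak{t}).
\end{equation*}
The second summand is orthogonal to $H_\alpha,H_\beta$ (hence to $H_\gamma=H_\alpha+H_\beta$), so it commutes with all of $\mathfrak{g}_1$, in particular with $\mathfrak{t}'_1$; therefore $\mathfrak{t}'$ is abelian of dimension $\mathrm{rk}G$, hence a Cartan of $\mathfrak{g}$. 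It splits as $\mathfrak{t}'\cap\mathfrak{h}=\mathfrak{t}_1^\perp\cap\mathfrak{t}$ and $\mathfrak{t}'\cap\mathfrak{m}=\mathfrak{t}'_1$, the latter of dimension $2$; hence $\mathfrak{t}'$ is a FSCS, contradicting Theorem \ref{thm-4-2}.

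The main obstacle is the Cartan-in-the-off-diagonal-part claim for $\mathfrak{su}(3)$ used in (2); it is a genuine representation-theoretic fact about $A_2$ rather than something available for a general root system, and it is precisely why the hypothesis in (2) excludes $G_2$-factors (where longer root strings break the trichotomy that pins down a unique $\gamma$, and where no analogue of the Fourier-matrix construction is available in the needed form).
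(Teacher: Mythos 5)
Your proposal is correct and follows essentially the same route as the paper: in both parts you manufacture a flat splitting Cartan subalgebra by combining $\alpha^\perp\cap\beta^\perp\subset\mathfrak{t}\subset\mathfrak{h}$ with a two-dimensional abelian subalgebra inside $\mathfrak{m}$, contradicting Theorem \ref{thm-4-2}. The only cosmetic difference is in (2): the paper exhibits two explicit commuting, linearly independent, strictly off-diagonal elements of $\mathfrak{su}(3)$ and pulls them back under the isomorphism $l$, whereas you invoke the fact that conjugating the diagonal Cartan of $\mathfrak{su}(3)$ by the Fourier (DFT) matrix lands it in the off-diagonal part; these are two ways of stating the same structural fact about $A_2$, namely that $\mathfrak{su}(3)$ has a Cartan subalgebra transverse to the diagonal one. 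Your preliminary remarks in (2) — that the angle hypothesis plus the exclusion of $G_2$-factors forces $\alpha,\beta$ to have the same length, that they then generate an $A_2$, and that exactly one of $\alpha\pm\beta$ is a root — are correct and make explicit what the paper's phrase ``Suppose conversely that $\pm\alpha\pm\beta$ are roots of $\mathfrak{m}$'' leaves implicit.
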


\begin{proof}
(1)\quad If $\alpha\neq\pm\beta$ and $\alpha\pm\beta$ are not roots of $\mathfrak{g}$, then for any nonzero vectors $v_1\in\mathfrak{g}_{\pm\alpha}$ and
$v_2\in\mathfrak{g}_{\pm\beta}$, we have $[v_1, v_2]=0$. Let $\alpha^\perp$, resp. $\beta^\perp$, be   the orthogonal complement of $\alpha$, resp. $\beta$,  in $\mathfrak{t}$ with respect to the chosen bi-invariant inner product. Then the linear span of $v_1, v_2$ and $\alpha^\perp\cap\beta^\perp$ is   a FSCS,
which is a contradiction to Theorem \ref{thm-4-2}.

(2)\quad Suppose conversely that $\pm\alpha\pm\beta$ are roots of $\mathfrak{m}$.
Then the subalgebra $\mathfrak{g}'=\mathbb{R}\alpha+\mathbb{R}\beta+\sum_{a,b\in\mathbb{Z}}
\mathfrak{g}_{\pm(a\alpha+b\beta)}$ of $\mathfrak{g}$ is isomorphic to $\mathfrak{su}(3)$,
by an isomorphism $l$ which maps $\mathfrak{g}'\cap\mathfrak{h}=\mathbb{R}\alpha+
\mathbb{R}\beta$ to the diagonal matrices.
Now we consider the  matrices
\begin{equation*}\label{2-matrices}
u=\left(  \begin{array}{ccc}
    0 & 1 & 1 \\
    -1 & 0 & 1 \\
    -1 & -1 & 0 \\
  \end{array}
\right),\mbox{ and }\quad v=
\left(  \begin{array}{ccc}
    0 & \sqrt{-1} & -\sqrt{-1} \\
    \sqrt{-1} & 0 & \sqrt{-1} \\
    -\sqrt{-1} & \sqrt{-1} & 0 \\
  \end{array}
\right)\mbox{ in }\mathfrak{su}(3).
\end{equation*}
Then $l^{-1}(u)$ and $l^{-1}v$ are  linearly independent and  commutative. Thus the linear span of $l^{-1}(u)$, $l^{-1}(v)$ and
$\alpha^\perp\cap\beta^\perp$ is a FSCS. This is a contradiction to Theorem \ref{thm-4-2}.
\end{proof}

\begin{lemma}\label{trick-lemma-2}
Let $M=G/H$ be a positively curved normal homogeneous space with
$\mathrm{rk}G=\mathrm{rk}H+1$.
\begin{description}
\item{\rm (1)} If there is a root $\alpha$ of $\mathfrak{g}$ perpendicular to
$\mathfrak{t}\cap\mathfrak{h}$, then $\mathfrak{g}_{\pm\alpha}\subset\mathfrak{m}$.
\item{\rm (2)} Any root $\alpha\in\mathfrak{t}\cap\mathfrak{h}$ of $\mathfrak{g}$  is a root of $\mathfrak{h}$.
\item{\rm (3)} Let   $\alpha$ and $\beta$ be two roots of $\mathfrak{m}$ with angle $\frac{\pi}{3}$
or $\frac{2\pi}{3}$. Suppose $\mathfrak{t}\cap\mathfrak{h}^{\perp}\subset\mathbb{R}\alpha+\mathbb{R}\beta$ and
$\alpha$ and $\beta$ do not belong to a $G_2$-factor of $\mathfrak{g}$.
Then
either $\mathrm{pr}(\alpha+\beta)$ or $\mathrm{pr}(\alpha-\beta)$
is a root of $\mathfrak{h}$.
\end{description}
\end{lemma}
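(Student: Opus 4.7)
My plan is to prove all three parts by contradiction, in each case either applying the weight decomposition of $\mathfrak{h}$ under $\mathfrak{t}\cap\mathfrak{h}$ or constructing a flat splitting Cartan subalgebra and invoking Theorem \ref{thm-4-2}. The overall strategy mirrors Lemma \ref{trick-lemma-1}, adapted to the parity shift $\mathrm{rk}G=\mathrm{rk}H+1$, where the one-dimensional complement $\mathfrak{t}\cap\mathfrak{h}^{\perp}=\mathbb{R}\xi\subset\mathfrak{m}$ plays a crucial role in upgrading ``rank equals rank'' style constructions to FSCS's.

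For (1), I will use the action of the maximal torus $\mathfrak{t}\cap\mathfrak{h}$ (which is a Cartan subalgebra of $\mathfrak{h}$ by our choice of $\mathfrak{t}$). Since $\alpha\perp\mathfrak{t}\cap\mathfrak{h}$, the root plane $\mathfrak{g}_{\pm\alpha}$ lies in the zero weight space of $\mathfrak{h}$'s adjoint action on $\mathfrak{g}$; but the zero weight part of $\mathfrak{h}$ itself is exactly $\mathfrak{t}\cap\mathfrak{h}$. Hence the orthogonal projection of $\mathfrak{g}_{\pm\alpha}$ to $\mathfrak{h}$ lands inside $\mathfrak{t}\cap\mathfrak{h}$; but $\mathfrak{g}_{\pm\alpha}\perp\mathfrak{t}$, so this projection is zero, giving $\mathfrak{g}_{\pm\alpha}\subset\mathfrak{m}$.

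For (2), suppose conversely that $\alpha\in\mathfrak{t}\cap\mathfrak{h}$ is a root of $\mathfrak{g}$ that is not a root of $\mathfrak{h}$; then $\mathfrak{g}_{\pm\alpha}\subset\mathfrak{m}$. Pick any nonzero $v\in\mathfrak{g}_{\pm\alpha}$ and form
\begin{equation*}
\mathfrak{s}=\mathbb{R}v+(\mathfrak{t}\cap\alpha^{\perp}).
\end{equation*}
Since $v$ commutes with everything in $\alpha^{\perp}\cap\mathfrak{t}$, the subspace $\mathfrak{s}$ is abelian of dimension $\mathrm{rk}\,\mathfrak{g}$. A centralizer calculation shows it is a Cartan subalgebra: the centralizer of $\mathfrak{t}\cap\alpha^{\perp}$ is $\mathfrak{t}+\mathfrak{g}_{\pm\alpha}$ (a rank-one subalgebra), and within this, the centralizer of $v$ is precisely $\mathfrak{s}$. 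Because $\alpha\in\mathfrak{t}\cap\mathfrak{h}$, the orthogonal line $\mathbb{R}\xi$ is contained in $\mathfrak{t}\cap\alpha^{\perp}$, so $\mathfrak{s}$ splits as $(\mathfrak{t}\cap\alpha^{\perp}\cap\mathfrak{h})+(\mathbb{R}v+\mathbb{R}\xi)$ with $\dim(\mathfrak{s}\cap\mathfrak{m})=2$. Thus $\mathfrak{s}$ is a FSCS, contradicting Theorem \ref{thm-4-2}.

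For (3), I will follow the $\mathfrak{su}(3)$-embedding strategy of Lemma \ref{trick-lemma-1}(2). Assume the whichever of $\alpha\pm\beta$ is a root of $\mathfrak{g}$ projects to a non-root of $\mathfrak{h}$. The root-angle hypothesis together with the $G_2$ exclusion guarantees that the span $\mathfrak{g}'=(\mathbb{R}\alpha+\mathbb{R}\beta)+\sum_{a,b\in\mathbb{Z}}\mathfrak{g}_{\pm(a\alpha+b\beta)}$ is an $A_2$-subalgebra isomorphic to $\mathfrak{su}(3)$ via some $l$ sending the Cartan to the diagonal matrices. Since by hypothesis $\alpha,\beta$ are roots of $\mathfrak{m}$ and, under the contradiction assumption, the remaining $\mathfrak{g}'$-root plane is also in $\mathfrak{m}$, every root plane of $\mathfrak{g}'$ lies in $\mathfrak{m}$, so $l^{-1}(u),l^{-1}(v)\in\mathfrak{m}$ for the commuting pair $u,v$ of Lemma \ref{trick-lemma-1}. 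Now set
\begin{equation*}
\mathfrak{s}=\mathbb{R}l^{-1}(u)+\mathbb{R}l^{-1}(v)+(\mathfrak{t}\cap\alpha^{\perp}\cap\beta^{\perp}).
\end{equation*}
The hypothesis $\mathfrak{t}\cap\mathfrak{h}^{\perp}\subset\mathbb{R}\alpha+\mathbb{R}\beta$ forces $\mathfrak{t}\cap\alpha^{\perp}\cap\beta^{\perp}\subset\mathfrak{t}\cap\mathfrak{h}$, so $\mathfrak{s}$ is splitting with $\dim(\mathfrak{s}\cap\mathfrak{m})=2$. By the same centralizer computation as in (2) applied twice (first centralize $\mathfrak{t}\cap\alpha^{\perp}\cap\beta^{\perp}$, obtaining $\mathfrak{g}'\oplus(\mathfrak{t}\cap\alpha^{\perp}\cap\beta^{\perp})$, then centralize the commuting pair $u,v$ inside $\mathfrak{su}(3)$, which is a Cartan subalgebra there), $\mathfrak{s}$ is a FSCS, again contradicting Theorem \ref{thm-4-2}.

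The only genuinely delicate step is verifying that the abelian subalgebras constructed in (2) and (3) are actually Cartan subalgebras rather than merely toral; this is what requires the two-stage centralizer argument and the maximality of the $u,v$-torus inside $\mathfrak{su}(3)$. The rest is essentially bookkeeping, and the splitting hypothesis in (3) is precisely what the $\mathrm{rk}G=\mathrm{rk}H+1$ setting demands to mimic Lemma \ref{trick-lemma-1}(2).
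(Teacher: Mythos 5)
Your proof is correct and follows essentially the same route the paper takes: part (1) is the direct observation that $\mathrm{pr}(\alpha)=0$ is never a root of $\mathfrak{h}$, while parts (2) and (3) both exhibit a flat splitting Cartan subalgebra and appeal to Theorem \ref{thm-4-2}, with (3) built on the same $\mathfrak{su}(3)$-embedding and commuting pair as Lemma \ref{trick-lemma-1}(2). The only difference is cosmetic — you verify explicitly, via two-stage centralizer computations, that the abelian subalgebras you construct really are Cartan subalgebras, whereas the paper simply asserts this (it follows for any maximal-dimensional abelian subalgebra of a compact Lie algebra) — so you have the same argument, spelled out at a finer level of detail.
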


\begin{proof}
(1)\quad This follows directly from the above observations and the fact that
  $\mathrm{pr}(\alpha)=0$, which is not a root of $\mathfrak{h}$.

(2)\quad If $\alpha=\mathrm{pr}(\alpha)$ is not a root of $\mathfrak{h}$, then
$\mathfrak{g}_{\pm\alpha}\subset\mathfrak{m}$. Let $v$ be any nonzero vector in
$\mathfrak{g}_{\pm\alpha}$. Then the linear span of $v$ and   the orthogonal complement $\alpha^\perp$ of $\alpha$ in
$\mathfrak{t}$ is a FSCS, which is a contradiction to Theorem \ref{thm-4-2}.

(3)\quad Notice that if the one dimensional space $\mathfrak{t}\cap\mathfrak{h}^{\perp}$ is
contained in $\mathbb{R}\alpha+\mathbb{R}\beta$, then
$\alpha^\perp\cap\beta^\perp$ is a subspace of $\mathfrak{t}\cap\mathfrak{h}$ of codimension one. Now the assertion can be proved similarly as  (2) of Lemma \ref{trick-lemma-1}.
\end{proof}


\section{Positively curved normal homogeneous spaces $M=G/H$ with $\mathrm{rk}G=\mathrm{rk}H$}
\label{s6}

In this section, we assume that $(M,F)$ is a positively curved connected normal homogeneous space induced by the projection $\rho:G\rightarrow M=G/H$ with $\mathrm{rk}G=\mathrm{rk}H$ and a bi-invariant Finsler metric $\bar{F}$ on $G$. We will also assume that $G$ is connected  simply connected, and $H$ is connected.

We first make a simple observation. Suppose $\mathfrak{g}=\mathfrak{g}_1\oplus\mathfrak{g}_2\cdots \oplus\mathfrak{g}_m\oplus\mathbb{R}^l$ is the decomposition of $\mathfrak{g}$ into the direct sum of simple ideals and the Euclidean factor. Then $\mathfrak{h}$ has a similar decomposition which only differs from the above one in exact one  simple factor,  for example, $\mathfrak{h}=\mathfrak{h}_1\oplus\mathfrak{g}_2\oplus\cdots \oplus\mathfrak{g}_m\oplus\mathbb{R}^l$, where $\mathfrak{h}_1$ is a subalgebra of $\mathfrak{g}_1$ with a strictly lower dimension. To prove this statement, we assume conversely that $\mathfrak{h}=\mathfrak{h}_1\oplus\mathfrak{h}_2
\oplus\mathfrak{h}_3\oplus\cdots\oplus\mathfrak{h}_m\oplus\mathbb{R}^l$ where $\dim\mathfrak{h}_i<\dim\mathfrak{g}_i$ for $i=1,2$. Then there exists a nonzero vector $v_i$
from
a root plane $\mathfrak{g}_{\pm\alpha_i}$ in $\mathfrak{m}\cap\mathfrak{g}_i$,
 for $i=1,2$. Thus $v_1$, $v_2$, and the orthogonal
complement $\alpha_1^\perp\cap\alpha_2^\perp$ in $\mathfrak{t}$ span a FSCS, which is a contradiction. This means that  we have the direct sum decompositions,
\begin{eqnarray}
\mathfrak{g}&=&\mathfrak{g}_1\oplus\mathfrak{g}_2\oplus\cdots\oplus\mathfrak{g}_m\oplus
\mathbb{R}^l,\\
\mathfrak{h}&=&\mathfrak{h}_1\oplus\mathfrak{g}_2\oplus\cdots\oplus\mathfrak{g}_m\oplus
\mathbb{R}^l,
\end{eqnarray}
where $\mathfrak{h}_1$ is a subalgebra of the simple compact $\mathfrak{g}_1$ with
$\mathrm{rk}\mathfrak{h}_1=\mathrm{rk}\mathfrak{g}_1$. Let $G_1$ and $H_1$ be the closed connected subgroups with Lie algebras $\mathfrak{g}_1$ and $\mathfrak{h}_1$ respectively.
 Let $\bar{F}_1$
be the restriction of $\bar{F}$ to $G_1$. Then $(M,F)$ is equivalent to the normal homogeneous
space $(M,F_1)$ induced by the projection $\rho:G_1\rightarrow M=G_1/H_1$ and $\bar{F}_1$.
So, to classify the positively curved normal homogeneous spaces in this case, up to
equivalence, we can further assume that $G$ is a compact simple Lie group, i.e., $\mathfrak{g}$ is one of the $A_n$, $B_n$ with $n>1$, $C_n$ with $n>2$, $D_n$ with $n>3$, $E_6$, $E_7$, $E_8$, $F_4$ or $G_2$.

Now we start a case by case study. In the following, we will  presuppose that the   coset space $G/H$  be endowed with an invariant normal Finsler metric of positive flag  curvature. If a contradiction arises, then we can conclude
the corresponding coset space does not admit any positively curved normal homogeneous Finsler metric.
At the final part of the discussion of each case, we will summarize the conclusion.

\medskip
\noindent\textbf{ The case $\mathfrak{g}=A_n$}.\quad The root system $\Delta_\mathfrak{g}$ can be identified with the subset
\begin{equation}\label{root-system-A}
\{\pm(e_i-e_j)| \, 1\leq i<j\leq n+1\}
\end{equation}
in $\mathbb{R}^{n+1}$ with the standard
orthonormal basis
$\{e_1,\ldots,e_{n+1}\}$. The subalgebra $\mathfrak{h}$ is then a direct sum of
the simple compact subalgebras of type $A$ or $\mathbb{R}$ (with abelian Lie bracket). More precisely, there is a decomposition of the
set $\{1,\ldots,n+1\}$ into a non-overlapping union $\mathop{\cup}\limits_{a\in\mathcal{A}}S_a$, such
that for different $a$ and $b$ in $\mathcal{A}$, $\pm (e_i- e_j)$ are not roots of
$\mathfrak{h}$ when $i\in S_a$ and $j\in S_b$, and
for any $a$ in $\mathcal{A}$, $\pm(e_i-e_j)$ are roots of $\mathfrak{h}$ when
$i$, $j\in S_a$ and $i\neq j$.

 Upon the conjugation of a Weyl group action, we can assume that each $S_a$ is a segment of continuous integers. When $S_a$ contains more than one element, it corresponds to a subalgebra of type $A$ in the
direct sum decomposition of $\mathfrak{g}$, otherwise it corresponds to a subalgebra isomorphic
to $\mathbb{R}$.

If $n>2$ and $\mathfrak{h}$ is not of the type $A_{n-1}\oplus\mathbb{R}$,
then upon the conjugation of the  Weyl group action,
$\alpha=e_1-e_{n-1}$ and $\beta=e_2-e_{n}$ are roots of $\mathfrak{m}$, and
$\alpha\pm\beta$ are not roots of $\mathfrak{g}$. However, This is a contradiction to  (1) of Lemma \ref{trick-lemma-1}.

If $n=2$, i.e., $\mathfrak{g}=\mathfrak{su}(3)$, and $\mathfrak{h}$ is not of the type  $A_1\oplus\mathbb{R}$, then upon  the conjugation of the   Weyl group action,
 $\mathfrak{h}$ is a Cartan subalgebra $\mathbb{R}\oplus\mathbb{R}$, and all the roots belong to $\mathfrak{m}$. This is a contradiction to (2) of Lemma \ref{trick-lemma-1}.

In  summarizing, in this case the only possibility is that $\mathfrak{h}=A_{n-1}\oplus\mathbb{R}$ and  $(M,F)$
is equivalent to the normal homogeneous complex projective space
$\mathrm{SU}(n+1)/\mathrm{S}(\mathrm{U}(n)\times \mathrm{U}(1))$. Note that this  is a symmetric coset space of rank one which does admit positively curved normal Finsler (in fact Riemannian) metrics.

\medskip
\noindent\textbf{The case $\mathfrak{g}=B_n$, $n>1$}.\quad The root system can be identified with
the subset
\begin{equation}\label{root-system-B}
\{\pm e_i|\, 1\leq i\leq n\}\cup\{\pm e_i\pm e_j|\,1\leq i<j\leq n\}
\end{equation}
in $\mathbb{R}^n$ with the standard orthonormal basis $\{e_1,\ldots,e_n\}$.
The subalgebra $\mathfrak{h}$ is a direct sum of subalgebras of type $A$, $B$, $D$ or $\mathbb{R}$. To be precise,
there is a decomposition of the
set $\{1,\ldots,n\}$ into a non-overlapping union $\mathop{\cup}\limits_{a\in\mathcal{A}}S_a$, such
that for different $a$ and $b$ in $\mathcal{A}$, $\pm e_i\pm e_j$ are not roots of
$\mathfrak{h}$ when $i\in S_a$ and $j\in S_b$, and
for any $a$ in $\mathcal{A}$, one of the following holds:
\begin{description}
\item{\rm (1)} For any $ i\in S_a$, $\pm e_i$ are roots of $\mathfrak{h}$, and for any $ i,j\in S_a$ with $i\neq j$, $\pm e_i \pm e_j$ are roots of $\mathfrak{h}$;
\item{\rm (2)} For any $i,j\in S_a$ with $i\neq j$, $\pm e_i \pm e_j$ are roots of $\mathfrak{h}$,
but $\pm e_i$s are not roots of $\mathfrak{h}$;
\item{\rm (3)} For any $ i,j\in S_a$ with $i\neq j$, either $\pm (e_i+e_j)$ or $\pm(e_i-e_j)$, but not both, are
roots of $\mathfrak{h}$,  and $\pm e_i$s are not roots of $\mathfrak{h}$;
\item{\rm (4)} $S_a=\{i\}$ and $\pm e_i$ are roots of $\mathfrak{g}$;
\item{\rm (5)} $S_a=\{i\}$ and there is no root for  $S_a$.
\end{description}
 Upon the Weyl group action, we can assume that each $S_a$ is a segment of continuous integers, and for $S_a$ in (3), $\pm(e_i-e_j)$ are roots of $\mathfrak{h}$ but $\pm(e_i+e_j)$ are not. Then each
$S_a$ corresponds to a subalgebra of type $B$, $D$, $A\oplus\mathbb{R}$ (with a one dimensional center), $A_1$ or $\mathbb{R}$.

If $n>3$ and $\mathfrak{h}$ is not of the type $B_{n-1}\oplus\mathbb{R}$, $D_{n-1}\oplus\mathbb{R}$ or $D_n$, then we can take
$\alpha=e_1+e_{n-1}$ and $\beta=e_2+e_n$ from roots of $\mathfrak{m}$. Then $\alpha\pm\beta$ are not roots of $\mathfrak{g}$, which is a contradiction to
 (1) of Lemma \ref{trick-lemma-1}.
If $\mathfrak{h}$ is $B_{n-1}\oplus\mathbb{R}$ or $D_{n-1}\oplus\mathbb{R}$ (here,  without losing generality,
we assume that $\mathbb{R}$ corresponds to $\{n\}$, and we only need $n>2$), then we can take
$\alpha=e_1+e_n$ and $\beta=e_1-e_n$. This is also a contradiction to  (1) of Lemma \ref{trick-lemma-1}.

If $n=3$ and $\mathfrak{h}$ has an $A_1$ or a $\mathbb{R}$ factor from (4) or (5) of the list
in it, then we can also use
$\alpha=e_1+e_n$ and $\beta=e_1-e_n$ to deduce a contradiction.
If $\mathfrak{h}=A_2\oplus\mathbb{R}$ from (3) of the list, then we can take $\alpha=e_1+e_2$ and $\beta=e_3$
to deduce a contradiction to  (1) of Lemma \ref{trick-lemma-1}. Thus $\mathfrak{h}$ can
only be $D_3$.

If $n=2$ and $\mathfrak{h}$ have a $A_1$ or $\mathbb{R}$ factor from (4) or (5) of the list, then we can take $\alpha=e_1+e_2$ and $\beta=e_1-e_2$
to deduce a  contradiction  to  (1) of Lemma \ref{trick-lemma-1}.
Now besides $D_2$,
$\mathfrak{h}$ can also be $A_1\oplus \mathbb{R}$ from (3) of the list, where
$(M,F)$ is equivalent to a homogeneous complex projective space
$\mathbb{C}\mathrm{P}^{3}=\mathrm{Sp}(2)/\mathrm{Sp}(1)S^1$.

In summarizing,  in this case  there are only two possibilities, namely,  $(M,F)$ is
equivalent to a normal homogeneous sphere $S^{2n}=\mathrm{SO}(2n+1)/\mathrm{SO}(2n)$,  or a homogeneous complex projective space
$\mathbb{C}\mathrm{P}^{3}=\mathrm{Sp}(2)/\mathrm{Sp}(1)S^1$. Note that the first one  is a symmetric coset space of rank one, and the second one appears in Berger's list, hence they both admit positively curved normal Finsler (in fact Riemannian) metrics.

\medskip
\noindent \textbf{The case $\mathfrak{g}=C_n$, $n>2$}.\quad The root system of $\mathfrak{g}$ can be
identified with the subset
\begin{equation}\label{root-system-C}
\{\pm 2 e_i|\, 1\leq i\leq n\}\cup\{\pm e_i\pm e_j|\, 1\leq i<j\leq n\}
\end{equation}
in $\mathbb{R}^n$ with the standard orthonormal basis $\{e_1,\ldots,e_n\}$.
The subalgebra $\mathfrak{h}$ is a direct sum of subalgebras of type $C$, $A$ and $\mathbb{R}$. To be precise,
there is a decomposition of the
set $\{1,\ldots,n\}$ into a non-overlapping union $\mathop{\cup}\limits_{a\in\mathcal{A}}S_a$, such
that for different $a$ and $b$ in $\mathcal{A}$, $\pm e_i\pm e_j$ are not roots of
$\mathfrak{h}$ when $i\in S_a$ and $j\in S_b$, and
for any $a$ in $\mathcal{A}$, one of the following holds:
\begin{description}
\item{\rm (1)} For any $ i\in S_a$, $\pm 2 e_i$ are roots of $\mathfrak{h}$, and for any $i,j\in S_a$ with $i\neq j$, $\pm e_i \pm e_j$ are roots of $\mathfrak{h}$;
\item{\rm (2)} For any $ i,j\in S_a$ with $i\neq j$, either $\pm(e_i+e_j)$ or $\pm (e_i-e_j)$, but not both, are roots
of $\mathfrak{h}$, and $\pm 2 e_i$s are not roots of $\mathfrak{h}$;
\item{\rm (3)} $S_a=\{i\}$ and $\pm 2 e_i$ are roots of $\mathfrak{h}$;
\item{\rm (4)} $S_a=\{i\}$ and there are no roots for  $S_a$.
\end{description}
Upon the  Weyl group action, we can assume that each $S_a$ is a segment of continuous integers, and for
any $S_a$ in (2), $\pm(e_i-e_j)$ are roots of $\mathfrak{h}$ but
$\pm(e_i+e_j)$ are not. Then each $S_a$ corresponds to a subalgebra of type $C$,
$A\oplus\mathbb{R}$ (with a one dimensional center), $A_1$ or $\mathbb{R}$.

If $n>3$ and $\mathfrak{h}$ is not $C_{n-1}\oplus \mathbb{R}$ or $C_{n-1}\oplus A_1$, then considering $\alpha=e_1+e_{n-1}$ and $\beta=e_2+e_n$ we get
a contradiction to (1) of Lemma \ref{trick-lemma-1}.

 Suppose $n=3$ and $\mathfrak{h}=A_1\oplus A_1\oplus \mathbb{R}$, where the $\mathbb{R}$ factor is from (4). Then  we can assume that $\mathbb{R}$ corresponds to $\{3\}$. Considering $\alpha=e_1 + e_2$ and $\beta=2 e_3$, we get a
contradiction to (1) of Lemma \ref{trick-lemma-1}. If $\mathfrak{h}=A_1\oplus (A_1\oplus\mathbb{R})$, where the factor $A_1\oplus\mathbb{R}$ is from (2) of the list and assumed to correspond to $\{2,3\}$, then we can still use $\alpha=e_1+e_2$ and $\beta=2e_3$ to deduce a contradiction. Moreover, if $n=3$ and
$\mathfrak{h}=A_1\oplus A_1\oplus A_1$, then considering the roots $\pm(e_1-e_2)$, $\pm (e_2-e_3)$ and
$\pm (e_1-e_3)$ in $\mathfrak{m}$, we get a   contradiction to (2) of Lemma \ref{trick-lemma-1}.

In summarizing, in this case the only possibility is that $\mathfrak{h}=C_{n-1}\oplus \mathbb{R}$ or $C_{n-1}\oplus A_1$.
 Thus  $(M,F)$ is equivalent to the normal homogeneous complex projective space $\mathrm{Sp}(n)/\mathrm{Sp}(n-1)S^1$ or the normal homogeneous quaternionic projective space
$\mathrm{Sp}(n)/\mathrm{Sp}(n-1)\mathrm{Sp}(1)$. It is well known that both the above spaces
admit positively curved normal Finsler (in fact Riemannian) metrics.

\medskip
\noindent \textbf{ The case $\mathfrak{g}=D_n$, $n>3$}.\quad
The root system of $\mathfrak{g}$ can be identified with the subset
\begin{equation}\label{root-system-D}
\{\pm e_i\pm e_j|\, 1\leq i<j\leq n\}
\end{equation}
in $\mathbb{R}^n$ with the standard
orthonormal basis $\{e_1,\ldots,e_n\}$. The subalgebra $\mathfrak{h}$ is a direct sum
of subalgebras of type $D$, $A$ and $\mathbb{R}$. To be precise,
there is a decomposition of the
set $\{1,\ldots,n\}$ into a non-overlapping union $\mathop{\cup}\limits_{a\in\mathcal{A}}S_a$, such
that for different $a$ and $b$ in $\mathcal{A}$, $\pm e_i\pm e_j$ are not roots of
$\mathfrak{h}$ when $i\in S_a$ and $j\in S_b$, and
for any $a$ in $\mathcal{A}$, one of the following holds:
\begin{description}
\item{\rm (1)} For any $i,j\in S_a$ with $i\neq j$, $\pm e_i\pm e_j$ are roots of $\mathfrak{h}$;
\item{\rm (2)} For any $i,j\in S_a$ with $i\neq j$, either $\pm(e_i+e_j)$ or $\pm(e_i-e_j)$,  but not both, are
roots of $\mathfrak{h}$;
\item{\rm (3)} $S_a=\{i\}$ and there is no root for $S_a$.
\end{description}
 Upon the automorphisms of $D_n$ induced by the Weyl group action of $B_n$, we can assume that each $S_a$ is a segment of continuous integers, and for
any $S_a$ of the second case, $\pm(e_i-e_j)$ are roots of $\mathfrak{h}$ but
$\pm(e_i+e_j)$ are not. Then each $S_a$ corresponds to a subalgebra of type $D$,
$A\oplus\mathbb{R}$ (with a one dimensional center) or $\mathbb{R}$.

If $\mathfrak{h}$ is not $D_{n-1}\oplus \mathbb{R}$,  then upon the conjugation of the Weyl group action, we can select $\alpha=e_1+e_{n-1}$ and $\beta=e_2+e_n$ and deduce a contradiction to (1) of
Lemma \ref{trick-lemma-1}. If $\mathfrak{h}=D_{n-1}\oplus\mathbb{R}$ (here we can assume that
$\mathbb{R}$
corresponds to $\{n\}$),
then we can select $\alpha=e_1+e_n$ and $\beta=e_1-e_n$ and deduce a contradiction to (1)
of Lemma \ref{trick-lemma-1}.

In conclusion, in this case, there does not exist  any positively curved normal homogeneous Finsler space.

\medskip
\noindent\textbf{The case $\mathfrak{g}=E_6$, $E_7$ or $E_8$}.\quad First note that for any two different
root planes $\mathfrak{g}_{\pm\alpha}$
and $\mathfrak{g}_{\pm\beta}$ in $\mathfrak{m}$, the angle between $\alpha$ and $\beta$
can not be $\pi/2$, otherwise $\alpha\pm\beta$ are not roots of $\mathfrak{g}$ and
we can deduce a contradiction to (1) of Lemma \ref{trick-lemma-1}. If the angle is
$\pi/3$, then $\pm(\alpha+\beta)$ are not roots of $\mathfrak{g}$ and
$[\mathfrak{g}_{\pm\alpha},\mathfrak{g}_{\pm\beta}]=\mathfrak{g}_{\pm(\alpha-\beta)}$ is
a root plane.
Then $ \mathfrak{g}_{\pm(\alpha-\beta)}$  must be a root plane in $\mathfrak{h}$, otherwise we can get a contradiction to
(2) of Lemma \ref{trick-lemma-1}. Moreover, for any root $\alpha$ of $\mathfrak{m}$, we have $$[\mathfrak{g}_{\pm\alpha},
\mathfrak{g}_{\pm\alpha}]\subset\mathfrak{t}\subset\mathfrak{h}.$$
   Consequently we have $[\mathfrak{m},\mathfrak{m}]\subset\mathfrak{h}$. By Proposition \ref{prop-5-2}, $(M,F)$ is a symmetric normal homogeneous space of rank $1$, which is impossible (see \cite{HE78}). In fact, this
argument is also valid for the cases of $A_n$ and $D_n$. In conclusion, in this case there does not exist any positively curved normal homogeneous Finsler spaces.

\medskip
\noindent\textbf{The case $\mathfrak{g}=F_4$}.\quad
The root system can be identified with the subset
\begin{equation}\label{root-system-F}
\{\pm e_i|\, 1\leq i\leq 4\}\cup\{\pm e_i\pm e_j|\, 1\leq i<j\leq 4\}\cup\{
\frac12(\pm e_1\pm e_2\pm e_3\pm e_4\}
\end{equation}
in $\mathbb{R}^4$ with the standard orthonormal basis $\{e_1,e_2,e_3,e_4\}$.
If there are
two short roots $\alpha$ and $\beta$ of $\mathfrak{m}$ with $\alpha\neq\beta$,
then using the  Weyl group action (which changes $\mathfrak{h}$ by a conjugation, without
changing the equivalent class of the normal homogeneous space $M$) we can assume that
$\alpha=\frac{1}{2}(e_1+e_2+e_3+e_4)$. By (1) of Lemma \ref{trick-lemma-1}, all long roots of the form
$\pm (e_i-e_j)$ belongs to $\mathfrak{h}$. Taking into account the  root $\beta$, we conclude that there are more long roots
belonging to $\mathfrak{h}$ by (1) of Lemma \ref{trick-lemma-1}. Thus
all long roots belong to $\mathfrak{h}$.
There are only two choices left for $\mathfrak{h}$, namely, $\mathfrak{h}=B_4$ if it has a short
root, or $D_4$ if it has no short root. If $\mathfrak{h}=D_4$, then the plane
spanned by $e_1$ and $\frac{1}{2}(e_1+e_2+e_3+e_4)$ contains exactly 6 roots which all belong to $\mathfrak{m}$, i.e. $\pm e_1$, $\pm\frac{1}{2}(e_1+e_2+e_3+e_4)$ and
$\pm\frac{1}{2}(-e_1+e_2+e_3+e_4)$ which gives a root system of $A_2$.
But then we can deduce a
contradiction to (2) of Lemma \ref{trick-lemma-1}, by setting $\alpha=e_1$ and $\beta=\frac{1}{2}(e_1+e_2+e_3+e_4)$. If $\mathfrak{h}=B_4$, then by
a suitable conjugation of the Weyl group action of $\mathfrak{g}$, we can assume that all
roots $\pm e_i$ belong to $\mathfrak{h}$, and all other roots $\frac12(\pm e_1\pm e_2\pm e_3\pm e_4)$ belong to $\mathfrak{m}$. Then it is easy to see that $(M,F)$ is equivalent to the symmetric normal homogeneous Cayley plane $F_4/\mathrm{Spin}(9)$. If $\mathfrak{m}$ contains at most one pair of short roots $\pm\alpha$ of $\mathfrak{g}$,
then it is easily seen that $\mathfrak{h}=\mathfrak{g}$, which is a contradiction.

In conclusion, in this case the only possibility is that $(M,F)$ is equivalent to the symmetric normal homogeneous Cayley plane $F_4/\mathrm{Spin}(9)$. Since it is a symmetric coset space of rank one, there does exist positively curved normal Finsler (Riemannian) metric on it.

\medskip
\noindent\textbf{The case $\mathfrak{g}=G_2$}. In this case it is  easily  seen that $\mathfrak{h}=A_2$, $A_1\oplus A_1$, $A_1\oplus \mathbb{R}$, or $\mathbb{R}\oplus\mathbb{R}$.
By (1) of Lemma \ref{trick-lemma-1}, $\mathfrak{m}$ can not have a pair of roots
$\alpha$ and $\beta$ with  angle $\pi/2$. Then $\mathfrak{h}$ must be $A_2$. Hence
the only possibility  is that $(M,F)$ is equivalent to the normal homogeneous
sphere $S^6=G_2/\mathrm{SU}(3)$. This coset space appears in Berger's list and it admits
positively curved normal Finsler (Riemannian) metrics.

The above discussion can be summarized as the following proposition.

\begin{proposition} \label{summary-prop}
Let $(M,F)$ be a positively curved normal homogeneous space, with $M=G/H$ and
$\mathrm{rk}G=\mathrm{rk}H$.  Then we have
\begin{description}
\item{\rm (1)} In the sense of equivalent coset spaces, $(M,F)$ must be equivalent to one of the coset spaces
$\mathbb{C}\mathrm{P}^{n}=\mathrm{SU}(n+1)/S(\mathrm{U}(n)\times\mathrm{U}(1))$ with $n\geq 1$,
$S^{2n}=\mathrm{SO}(2n+1)/\mathrm{SO}(2n)$ with $n>1$,
$\mathbb{H}\mathrm{P}^n=\mathrm{Sp}(n+1)/\mathrm{Sp}(n)\mathrm{Sp}(1)$ with $n>1$, $\mathbb{C}\mathrm{P}^{2n+1}=\mathrm{Sp}(n+1)/\mathrm{Sp}(n)S^1$ with $n\geq 1$,
or $S^6=\mathrm{G}_2/\mathrm{SU}(3)$.
\item{\rm (2)} In the Lie algebra level, under the assumption that $\mathfrak{g}$ is  compact simple,  the pair
$(\mathfrak{g},\mathfrak{h})$ must be one of  $(A_n,A_{n-1}\oplus\mathbb{R})$ with $n\geq 1$,
$(B_n,D_n)$ with $n>1$ (notice that $B_2=C_2$ and $D_2=A_1\oplus A_1$), $(C_n,C_{n-1}\oplus A_1)$ with $n>2$, $(C_n,C_{n-1}\oplus\mathbb{R})$ with $n>1$ (notice that $(C_2,A_1\oplus\mathbb{R})=
(B_2,A_1\oplus\mathbb{R})$), or $(G_2,A_2)$.
\end{description}
\end{proposition}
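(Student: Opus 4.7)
The strategy is a reduction to a case-by-case analysis over the type of the simple factor of $\mathfrak{g}$ in which $\mathfrak{h}$ differs from $\mathfrak{g}$, followed by a systematic search, in each type, for a pair of $\mathfrak{m}$-roots that would produce a flat splitting Cartan subalgebra (FSCS) and thus contradict Theorem \ref{thm-4-2}.

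\emph{Reduction to simple $\mathfrak{g}$.} Decomposing $\mathfrak{g}=\mathfrak{g}_1\oplus\cdots\oplus\mathfrak{g}_m\oplus\mathbb{R}^l$ into simple ideals and a Euclidean factor, I first claim $\mathfrak{h}$ matches this decomposition except in a single simple factor. Otherwise there exist $i_1\ne i_2$ and root vectors $v_j\in\mathfrak{g}_{\pm\alpha_j}\cap\mathfrak{m}$ with $\alpha_j\in\mathfrak{g}_{i_j}$; these commute (being in orthogonal ideals), and together with $\alpha_1^\perp\cap\alpha_2^\perp\subset\mathfrak{t}$ they span a FSCS, contradicting Theorem \ref{thm-4-2}. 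Using the equivalence of normal homogeneous spaces under the obvious restriction/extension of $\bar F$, one may then restrict to the sole nontrivial simple factor and assume $\mathfrak{g}$ is simple compact.

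\emph{Main case work.} Since $\mathrm{rk}\,\mathfrak{h}=\mathrm{rk}\,\mathfrak{g}$, the subalgebra $\mathfrak{h}$ is determined (up to Weyl conjugation) by the subset of $\Delta_\mathfrak{g}$ it contains. For each simple type, I enumerate these subsets by the combinatorial block-partition data described in the excerpt (for $A_n$, $B_n$, $C_n$, $D_n$) or by direct inspection (for $F_4, G_2, E_6, E_7, E_8$). The key move is: whenever one can exhibit two $\mathfrak{m}$-roots $\alpha,\beta$ with $\alpha\ne\pm\beta$ and $\alpha\pm\beta\notin\Delta_\mathfrak{g}$, Lemma \ref{trick-lemma-1}(1) gives a FSCS contradiction. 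When $\alpha,\beta$ make an angle of $\pi/3$ or $2\pi/3$ outside a $G_2$-factor, Lemma \ref{trick-lemma-1}(2) again produces a FSCS unless $\alpha\pm\beta$ lies in $\mathfrak{h}$. Standard coordinate descriptions of the root systems $A_n,B_n,C_n,D_n,F_4,G_2$ make it routine to exhibit such an $(\alpha,\beta)$ for every candidate $\mathfrak{h}$ outside the stated list; the survivors are precisely $(A_n,A_{n-1}\oplus\mathbb{R})$, $(B_n,D_n)$, $(C_n,C_{n-1}\oplus A_1)$, $(C_n,C_{n-1}\oplus\mathbb{R})$, and $(G_2,A_2)$.

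\emph{Excluding $A_n$ ($n>2$), $D_n$, and the simply-laced exceptionals.} For these types, the Lemma \ref{trick-lemma-1} constraints force, for every pair $\alpha,\beta\in\Delta_\mathfrak{m}$, that both $\pm(\alpha+\beta)$ and $\pm(\alpha-\beta)$, when roots of $\mathfrak{g}$, must actually be roots of $\mathfrak{h}$; together with $[\mathfrak{g}_{\pm\alpha},\mathfrak{g}_{\pm\alpha}]\subset\mathfrak{t}\subset\mathfrak{h}$ this yields $[\mathfrak{m},\mathfrak{m}]\subset\mathfrak{h}$, so $M$ is symmetric. Proposition \ref{prop-5-2} then forces rank $1$, which eliminates $E_6,E_7,E_8$ entirely, eliminates $D_n$ entirely, leaves only $A_{n-1}\oplus\mathbb{R}$ inside $A_n$, and for $F_4$ leaves only $F_4/\mathrm{Spin}(9)$ (whose equivalence class already appears in the list via $\mathbb{OP}^2$). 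Translating the admissible Lie-algebra pairs into coset space presentations produces the list in part (1), all of which are classically known to support positively curved normal Riemannian metrics, certifying consistency.

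\emph{Main obstacle.} The substantive difficulty is combinatorial completeness: for each classical type one must certify that \emph{every} root subsystem $\mathfrak{h}\subset\mathfrak{g}$ outside the intended list actually admits a bad pair $(\alpha,\beta)$ in $\mathfrak{m}$. The borderline configurations---$C_{n-1}\oplus\mathbb{R}$ versus $C_{n-1}\oplus A_1$ inside $C_n$, $D_n$ versus $B_{n-1}\oplus\mathbb{R}$ inside $B_n$, and the low-rank accidental isomorphisms $B_2=C_2$, $D_2=A_1\oplus A_1$, $D_3=A_3$---are where miscounting is easy; these force the explicit tabulation in the excerpt and are the real source of the length of the argument.
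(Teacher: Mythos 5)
Your overall strategy---reduce to simple $\mathfrak{g}$ via the commuting-ideal trick, then work type by type using Lemma~\ref{trick-lemma-1} to manufacture a FSCS against Theorem~\ref{thm-4-2}---is exactly the paper's approach, and your shortcut for the simply-laced cases (show $[\mathfrak{m},\mathfrak{m}]\subset\mathfrak{h}$, invoke Proposition~\ref{prop-5-2} to force rank one) is one the paper itself explicitly endorses for $A_n$, $D_n$, $E_6$, $E_7$, $E_8$ at the end of its exceptional-case discussion.

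There is, however, a genuine gap in the way you handle $F_4$: you fold it into the symmetric shortcut, but the key claim ``for every pair $\alpha,\beta\in\Delta_\mathfrak{m}$, both $\pm(\alpha+\beta)$ and $\pm(\alpha-\beta)$, when roots, lie in $\Delta_\mathfrak{h}$'' does not follow from Lemma~\ref{trick-lemma-1} once there are two root lengths. In $F_4$ one can have two short roots $\alpha,\beta\in\Delta_\mathfrak{m}$ at angle $\pi/2$ (e.g.\ $\alpha=\tfrac12(e_1+e_2+e_3+e_4)$, $\beta=\tfrac12(e_1+e_2-e_3-e_4)$), for which \emph{both} $\alpha+\beta$ and $\alpha-\beta$ are (long) roots of $\mathfrak{g}$; neither clause of Lemma~\ref{trick-lemma-1} then gives any constraint on whether those long roots lie in $\mathfrak{h}$, so you cannot conclude $[\mathfrak{m},\mathfrak{m}]\subset\mathfrak{h}$. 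The paper resolves $F_4$ by a separate analysis that tracks short versus long roots and rules the candidates down to $\mathfrak{h}=B_4$; some version of that is unavoidable. The same obstruction (short-short pairs at angle $\pi/2$ whose sum and difference are both long roots) is present in $B_n$ and $C_n$, which is precisely why those types do \emph{not} reduce to the symmetric dichotomy and instead require the block-partition enumeration you flagged but did not carry out; that enumeration is where the non-symmetric survivors $(C_n,C_{n-1}\oplus A_1)$, $(C_n,C_{n-1}\oplus\mathbb{R})$, and $(B_2,A_1\oplus\mathbb{R})$ actually emerge. Finally, a small bookkeeping point: you say $F_4/\mathrm{Spin}(9)$ ``already appears in the list via $\mathbb{OP}^2$,'' but the statement of Proposition~\ref{summary-prop} as printed omits both $\mathbb{OP}^2$ in part (1) and $(F_4,B_4)$ in part (2); the paper's own Section~7 (Case~I) nevertheless cites $(F_4,B_4)$ as one of the outcomes of this proposition, so the omission is an inconsistency in the source, not an error of yours---but you should not present the symmetric shortcut as justifying the $F_4$ outcome.
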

\begin{remark}
In \cite{XDHH2014}, we (with two other co-authors) have classified all positively
curved homogeneous Finsler spaces satisfying the condition $\mathrm{rk}G=\mathrm{rk}H$, and
get exactly the same list as in \cite{Wallach1972}. Without the normal condition, we only have
(1) of Lemma \ref{trick-lemma-1}. The homogeneous spaces $\mathrm{SU}(3)/\mathrm{T}^2$,
$\mathrm{Sp}(3)/\mathrm{Sp(1)}\times \mathrm{Sp}(1)\times \mathrm{Sp}(1)$ and $F_4/\mathrm{Spin}(8)$, which can be endowed with positively curved
homogeneous non-Riemannian metrics,  can not be endowed with any positively curved normal
homogeneous metrics by  (2) of Lemma \ref{trick-lemma-1}.
\end{remark}
\section{Positively curved normal homogeneous spaces $M=G/H$ with $\mathrm{rk}G=\mathrm{rk}H+1$}
\label{s7}

In the above section, we have determined positively curved normal homogeneous Finsler spaces $G/H$
where $G$ and $H$ have the same rank. In this section, we consider the  other case. We first give the theme for the study.
\subsection{The theme for the study}
In this section, we assume that $(M,F)$ is a connected positively curved normal homogeneous space induced by the projection $\rho:G\rightarrow M=G/H$ with $\mathrm{rk}G=\mathrm{rk}H+1$ and a bi-invariant Finsler metric $\bar{F}$ on $G$. We will also assume that $G$ is connected and simply connected, and
$H$ is connected. We keep the notations of the above sections.

In the following, we will frequently consider the subalgebra $\mathfrak{k}$ of $\mathfrak{g}$ generated by $\mathfrak{h}$ and $\mathfrak{t}$. Since $\mathfrak{k}$  contains the
Cartan subalgebra $\mathfrak{t}$, there is a connected closed subgroup $K$ of $G$
such that $\mathrm{Lie}(K)=\mathfrak{k}$.
 If $\dim\mathfrak{k}<\dim \mathfrak{g}$, then the discussion in Theorem \ref{thm-4-2} shows that the corresponding coset space $G/K$ admits positively curved normal homogeneous Finsler metrics. Thus $G/K$ must be one of the coset spaces in the list of Proposition \ref{summary-prop}.

It is obvious that we only need to consider the following three cases:
\begin{description}
\item{I}\quad Each root plane of $\mathfrak{h}$ is a root plane of $\mathfrak{g}$;
\item{II}\quad There is a root plane $\mathfrak{h}_{\pm\alpha'}$ of $\mathfrak{h}$ which is
not a root plane of $\mathfrak{g}$, and there are two roots $\alpha$ and $\beta$ of $\mathfrak{g}$ belonging to
different simple components of $\mathfrak{g}$ such that $\mathrm{pr}(\alpha)=\mathrm{pr}(\beta)=\alpha'$.
\item{III}\quad There is a root plane $\mathfrak{h}_{\pm\alpha'}$ of $\mathfrak{h}$ which is
not a root plane of $\mathfrak{g}$, and there are two roots $\alpha$ and $\beta$ of
$\mathfrak{g}$ belonging to the same simple component such that
$\mathrm{pr}(\alpha)=\mathrm{pr}(\beta)=\alpha'$.
\end{description}

Now we start a case by case study. As in Section \ref{s6}, if a contradiction
arises, then we conclude that the coset space under consideration does not admit any
positively curved normal Finsler metric.

\noindent \textbf{Case I}.\,\,\,\quad We assume that each root plane of $\mathfrak{h}$ is a root plane of $\mathfrak{g}$. Then
the subalgebra $\mathfrak{k}$ has
the same root planes as $\mathfrak{h}$. Moreover,  $\mathfrak{h}$ has the same semi-simple
components as $\mathfrak{k}$. If $\mathfrak{k}=\mathfrak{g}$, then $\dim M=1$,
which is a contradiction. If $\mathfrak{k}\neq \mathfrak{g}$, then by Theorem \ref{thm-4-2}, we can apply the discussion in Section \ref{s6} to the positively curved normal homogeneous space $G/K$. So we have the following direct sum
decompositions
\begin{eqnarray}
\mathfrak{g}&=&\mathfrak{g}_1\oplus\mathfrak{g}_2\oplus\cdots\oplus\mathfrak{g}_m\oplus
\mathbb{R}^l,\\
\mathfrak{k}&=&\mathfrak{k}_1\oplus\mathfrak{g}_2\oplus\cdots\oplus\mathfrak{g}_m\oplus
\mathbb{R}^l,
\end{eqnarray}
where the pair $(\mathfrak{g}_1,\mathfrak{k}_1)$ can only be
$(A_n,A_{n-1}\oplus \mathbb{R})$, $(B_n,D_n)$, $(C_n,C_{n-1}\oplus A_1)$,
$(C_n,C_{n-1}\oplus \mathbb{R})$, $(F_4,B_4)$ or $(G_2,A_2)$ (see Proposition \ref{summary-prop}). Note that the cases
that $(\mathfrak{g}_1,\mathfrak{k}_1)=(B_n,D_n)$, $(C_n,C_{n-1}\oplus A_1)$,
$(F_4,B_4)$ and $(G_2,A_2)$ are impossible. Otherwise we will have
\begin{equation}
\mathfrak{h}=\mathfrak{k}_1\oplus\mathfrak{g}_2\oplus\cdots\oplus\mathfrak{g}_m\oplus
\mathbb{R}^{l-1}.
\end{equation}
Choose an arbitrary nonzero vector $v$ in a root plane $\mathfrak{g}_{\pm\alpha}$
in $\mathfrak{m}$. Then $v$ and $\alpha^\perp$ in $\mathfrak{t}$
span a FSCS, which is a contradiction to Theorem \ref{thm-4-2}. For the remaining
two cases $(\mathfrak{g}_1,\mathfrak{k}_1)=(A_n,A_{n-1}\oplus\mathbb{R})$ and
$(\mathfrak{g}_1,\mathfrak{k}_1)=(C_n,C_{n-1}\oplus\mathbb{R})$, the corresponding $M=G/H$ is a $S^1$-bundle over the complex projective space $G/K$, which is equivalent to
its universal covering, i.e.,
a normal homogeneous sphere $\mathrm{SU}(n+1)/\mathrm{SU}(n)$, $\mathrm{U}(n+1)/\mathrm{U}(n)$,
$\mathrm{Sp}(n)/\mathrm{Sp}(n-1)$, or $\mathrm{Sp}(n)S^1/\mathrm{Sp}(n-1)S^1$.
These spaces all appear in Berger's list, so they all admit positively curved normal Finsler (Riemannian) metrics.

\medskip
\noindent\textbf{Case II}.\quad We assume that
there is a root plane $\mathfrak{h}_{\pm\alpha'}$  which is
not a root plane of $\mathfrak{g}$, and there are two roots $\alpha$ and $\beta$ of $\mathfrak{g}$ belonging to
different simple components of $\mathfrak{g}$, say $\mathfrak{g}_1$ and $\mathfrak{g}_2$,
 such that $\mathrm{pr}(\alpha)=\mathrm{pr}(\beta)=\alpha'$.
Then $\hat{\mathfrak{g}}_{\pm\alpha'}=(\mathfrak{g}_1)_{\pm\alpha}+(\mathfrak{g}_2)_{\pm\beta}$,
and
all other root planes of $\mathfrak{h}$ are also root planes of $\mathfrak{g}$.
We claim that
$\mathfrak{h}_{\pm\alpha'}\cap(\mathfrak{g}_1)_{\pm\alpha}=\mathfrak{h}_{\pm\alpha'}
\cap(\mathfrak{g}_2)_{\pm\beta}=0$. In fact,  otherwise the $\mathrm{Ad}(\exp (\mathfrak{t}\cap\mathfrak{h}))$ actions will make $\mathfrak{h}_{\pm\alpha'}$
a root plane of $\mathfrak{g}$.

Now we have the direct sum decompositions
\begin{eqnarray}
\mathfrak{g}&=&\mathfrak{g}_1\oplus\mathfrak{g}_2\oplus\mathfrak{g}_3\oplus\cdots
\oplus\mathfrak{g}_m\oplus\mathbb{R}^l,\\
\mathfrak{h}&=&\mathfrak{h}'\oplus\mathfrak{g}_3\oplus\cdots
\oplus\mathfrak{g}_m\oplus\mathbb{R}^l,
\end{eqnarray}
where $\mathfrak{h}'$ is a subalgebra of $\mathfrak{g}_1\oplus\mathfrak{g}_2$. We now prove that, if there
is any other root $\alpha''$ of $\mathfrak{g}_1$, such that $(\mathfrak{g}_1)_{\pm\alpha''}$
is a root plane of $\mathfrak{h}'$,
then $[(\mathfrak{g}_1)_{\pm\alpha''},(\mathfrak{g}_1)_{\pm\alpha}]=0$, from
which we can also see $\alpha$ and $\alpha''$ must be orthogonal to each other. In fact, otherwise we have
$$[(\mathfrak{g}_1)_{\pm\alpha''},\mathfrak{h}_{\pm\alpha'}]=
[(\mathfrak{g}_1)_{\pm\alpha''},(\mathfrak{g}_1)_{\pm\alpha}]\neq 0.$$ Then
$$(\mathfrak{g}_1)_{\pm\alpha}\subset
[(\mathfrak{g}_1)_{\pm\alpha''},[(\mathfrak{g}_1)_{\pm\alpha''},\mathfrak{h}_{\pm\alpha'}]]\subset
\mathfrak{h}',$$
which is a contradiction. Similarly, if there is any other root $\beta''$ of $\mathfrak{g}_2$ such that $(\mathfrak{g}_2)_{\pm\beta''}$ is a root plane of $\mathfrak{h}'$, then $[(\mathfrak{g}_2)_{\pm\beta''},(\mathfrak{g}_2)_{\pm\beta}]=0$, and $\beta''$
is orthogonal to $\beta$.

The above argument shows that, if there exists any root of $\mathfrak{h}'$ other than $\pm\alpha'$,
then the subalgebra $\mathfrak{k}$ must be the   direct sum of three subalgebras in
$\mathfrak{g}_1\oplus\mathfrak{g}_2$. This implies that $\mathfrak{k}\neq \mathfrak{g}$.
According to the above arguments, we have direct sum decompositions:
\begin{eqnarray}
\mathfrak{g}&=&(\mathfrak{g}_1\oplus\mathfrak{g}_2)\oplus\mathfrak{g}_3\oplus\cdots
\oplus\mathfrak{g}_m\oplus\mathbb{R}^l,\label{0005}\\
\mathfrak{k}&=&(\mathfrak{h}''\oplus A_1\oplus A_1)\oplus\mathfrak{k}_3\oplus\cdots\oplus\mathfrak{k}_n\oplus\mathbb{R}^l,\\
\mathfrak{h}&=&(\mathfrak{h}''\oplus A_1)\oplus\mathfrak{k}_3\oplus\cdots\oplus\mathfrak{k}_n\oplus\mathbb{R}^l,\label{0007}
\end{eqnarray}
where each $\mathfrak{k}_i$ is a subalgebra of $\mathfrak{g}_i$. Moreover, in the decomposition of $\mathfrak{k}$,
$\mathfrak{h}''\oplus A_1\oplus A_1$ is a subalgebra of $\mathfrak{g}_1\oplus\mathfrak{g}_2$, and the two $A_1$-factors correspond to
the roots $\alpha$ and $\beta$, respectively. Furthermore,   in the decomposition of $\mathfrak{h}$, the $A_1$-factor
is a diagonal subalgebra of $A_1\oplus A_1$ in
$\mathfrak{k}$, corresponding to the root $\alpha'$.
Applying the same arguments in Section \ref{s6} to $G/K$, we can assume, up to a  proper reorder, that $\mathfrak{k}_i=\mathfrak{g}_i$ for $i>2$, and $\mathfrak{g}_2=A_1$ or $\mathfrak{h}''$.

By Proposition \ref{summary-prop}, there are only very
limited choices for $\{\mathfrak{g}_1,\mathfrak{g}_2\}$ and $\mathfrak{h}''$. More precisely, if
$\mathfrak{g}_2=A_1$ and $\mathfrak{h}''=\mathbb{R}$, then we have
$\mathfrak{g}_1=A_2$, and $(M,F)$ is equivalent to the normal homogeneous space $\mathrm{SU}(3)\times \mathrm{SO}(3)/\mathrm{U}^*(2)$ constructed in \cite{WILKING1990}.
If $\mathfrak{g}_2=A_1$ and $\mathfrak{h}''=A_1$, then we have $\mathfrak{g}_1=C_2$, and
$(M,F)$ is equivalent to the homogeneous sphere
$S^7=\mathrm{Sp}(2)\mathrm{Sp}(1)/\mathrm{Sp}(1)\mathrm{Sp}(1)$.
Furthermore, if $\mathfrak{g}_2=A_1$ and $\mathfrak{h}''=C_{n-1}$ with $n>2$, then we have
$\mathfrak{g}_1=C_n$, and $(M,F)$ is equivalent to the homogeneous sphere
$S^{4n-1}=\mathrm{Sp}(n)\mathrm{Sp}(1)/\mathrm{Sp}(n-1)\mathrm{Sp}(1)$.
If $\mathfrak{g}_2=\mathfrak{h}''$, then $\mathfrak{g}_1$ must be $C_2$, and
we have $S^7=\mathrm{Sp}(2)\mathrm{Sp}(1)/\mathrm{Sp}(1)\mathrm{Sp}(1)$
as well.


If there does not exist any  root other than $\pm\alpha'$ for $\mathfrak{h}'$, then
(\ref{0005})-(\ref{0007}) are still valid for $\mathfrak{h}''=0$.
If $\mathfrak{k}\neq\mathfrak{g}$, then by the discussion in Section \ref{s6}
and Proposition \ref{summary-prop}, the pair $\{\mathfrak{g}_1,\mathfrak{g}_2\}$ must be
$\{0,C_2\}$, which is a contradiction with our assumption. So $\mathfrak{g}_1=\mathfrak{g}_2=A_1$,
and $(M, F)$ is
equivalent to the symmetric coset space $S^3=\mathrm{SO}(4)/\mathrm{SO}(3)$.

\medskip
\noindent\textbf{Case III}.\quad We assume that there is a root plane $\mathfrak{h}_{\pm\alpha'}$ of $\mathfrak{h}$ which is
not a root plane of $\mathfrak{g}$, and there are two roots $\alpha$ and $\beta$ of
$\mathfrak{g}$ belonging to the same simple component, say $\mathfrak{g}_1$, such that
$\mathrm{pr}(\alpha)=\mathrm{pr}(\beta)=\alpha'$.
From our description for $\mathfrak{h}$ we have the direct sum decompositions
\begin{eqnarray}
\mathfrak{g}=\mathfrak{g}_1\oplus\mathfrak{g}_2\oplus\cdots\oplus\mathfrak{g}_{m}
\oplus\mathbb{R}^l,\\
\mathfrak{h}=\mathfrak{h}_1\oplus\mathfrak{h}_2\oplus\cdots\oplus\mathfrak{h}_{m}
\oplus\mathbb{R}^l,
\end{eqnarray}
where $\mathfrak{h}_i$ is a subalgebra of the simple factor $\mathfrak{g}_i$ with
$\mathrm{rk}\mathfrak{h}_i=\mathrm{rk}\mathfrak{g}_i$, $\forall i$. We assert that  $\mathfrak{h}_i=\mathfrak{g}_i$ for any $i>1$. In fact, otherwise we can find a root plane
$\mathfrak{g}_{\pm\gamma}$ in $\mathfrak{m}$ from some $\mathfrak{g}_i$, $i>1$. Then for any
nonzero vector $v\in\mathfrak{g}_{\pm\gamma}$, the linear span of $v$ and the orthogonal
complement $\gamma^{\perp}$ in $\mathfrak{t}$ is a FSCS. This is a contradiction to
Theorem \ref{thm-4-2}.
Now let $H_1$ be the connected subgroup of $G_1$ with $\mathrm{Lie}(H_1)=\mathfrak{h}_1$. Then
$(M,F)$ is equivalent to a normal homogeneous space $G_1/H_1$. Thus we can assume that $\mathfrak{g}$ is a simple compact Lie algebra.

There are some common cases which can be excluded by the following lemmas.
\begin{lemma}\label{trick-lemma-3} Assume that $G$ is a compact simple Lie group which is not $G_2$, and $M=G/H$
is a positively curved normal homogeneous space in Case III. Keep all the above notations. Then the angle
between $\alpha$ and $\beta$ can not be $\frac{\pi}3$ or $\frac{2\pi}3$.
\end{lemma}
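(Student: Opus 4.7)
My plan is to split into subcases based on the angle $\theta \in \{\pi/3, 2\pi/3\}$ between $\alpha$ and $\beta$, and in each to derive a contradiction with the Case III hypothesis. Both subcases share the same preliminary setup: since $|\alpha|=|\beta|$ (forced by either value of $\theta$) and $\mathfrak{g}$ is not $G_2$, the roots of $\mathfrak{g}$ in the plane $\mathbb{R}\alpha+\mathbb{R}\beta$ form an $A_2$ system, and the subalgebra $\mathfrak{g}'$ generated by them is isomorphic to $\mathfrak{su}(3)$. The hypothesis $\mathrm{pr}(\alpha)=\mathrm{pr}(\beta)=\alpha'$ gives $\alpha-\beta\in \mathfrak{t}\cap\mathfrak{h}^\perp$, and since $\mathrm{rk}\,G=\mathrm{rk}\,H+1$, this one-dimensional subspace coincides with $\mathbb{R}(\alpha-\beta)$; consequently $\alpha+\beta\in \mathfrak{t}\cap\mathfrak{h}$ and $\alpha'=(\alpha+\beta)/2$. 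Since $\mathrm{ad}(\mathfrak{t}\cap\mathfrak{h})$ acts as the scalar $\alpha'$ on both complex root spaces $\mathfrak{g}_\alpha^{\mathbb{C}}$ and $\mathfrak{g}_\beta^{\mathbb{C}}$, the root space $\mathfrak{h}_{\alpha'}^{\mathbb{C}}$ in $\mathfrak{h}^{\mathbb{C}}$ can be parametrized as a complex line $\mathbb{C}(aE_\alpha+bE_\beta)$ for some $(a,b)\neq (0,0)$.

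For the subcase $\theta=\pi/3$, $\alpha-\beta$ is a root of $\mathfrak{g}$ lying in $\mathfrak{t}\cap\mathfrak{h}^\perp$, so Lemma \ref{trick-lemma-2}(1) places $\mathfrak{g}_{\pm(\alpha-\beta)}$ inside $\mathfrak{m}$. I then compute in $\mathfrak{g}^{\mathbb{C}}$
\[
[aE_\alpha+bE_\beta,\,\bar{a}E_{-\alpha}+\bar{b}E_{-\beta}]
=|a|^2[E_\alpha,E_{-\alpha}]+a\bar{b}[E_\alpha,E_{-\beta}]+\bar{a}b\,[E_\beta,E_{-\alpha}]+|b|^2[E_\beta,E_{-\beta}].
\]
By Lemma \ref{trick-lemma-0}(3), $[E_\alpha,E_{-\beta}]$ and $[E_\beta,E_{-\alpha}]$ are nonzero multiples of $E_{\alpha-\beta}$ and $E_{-(\alpha-\beta)}$ respectively. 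However, since $\mathfrak{h}_{\pm\alpha'}$ is a root plane of $\mathfrak{h}$, this bracket must lie in $\mathfrak{t}^{\mathbb{C}}\cap\mathfrak{h}^{\mathbb{C}}$, which admits no contribution from $\mathfrak{g}_{\pm(\alpha-\beta)}^{\mathbb{C}}\subset \mathfrak{m}^{\mathbb{C}}$. This forces $a\bar{b}=0$, so either $a=0$ or $b=0$, giving $\mathfrak{h}_{\pm\alpha'}=\mathfrak{g}_{\pm\beta}$ or $\mathfrak{g}_{\pm\alpha}$. In either case $\mathfrak{h}_{\pm\alpha'}$ is in fact a root plane of $\mathfrak{g}$, contradicting the defining hypothesis of Case III.

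For the subcase $\theta=2\pi/3$, $\alpha+\beta$ is a root of $\mathfrak{g}$ (whereas $\alpha-\beta$ is not, as the length ratio $\sqrt{3}$ would require a $G_2$-factor). Because $\alpha+\beta\perp\alpha-\beta$, we have $\alpha+\beta\in \mathfrak{t}\cap\mathfrak{h}$, so Lemma \ref{trick-lemma-2}(2) forces $\alpha+\beta$ to be a root of $\mathfrak{h}$. But $\alpha'=(\alpha+\beta)/2$ is already a root of $\mathfrak{h}$ by hypothesis, producing two parallel roots $\alpha'$ and $2\alpha'$ in the semisimple part of $\mathfrak{h}$, which violates the reducedness of root systems of compact semisimple Lie algebras. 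The main obstacle is the $\pi/3$ subcase, which cannot invoke Lemma \ref{trick-lemma-2}(3) directly because $\alpha,\beta$ are not roots of $\mathfrak{m}$; the resolution is the explicit $\mathfrak{su}(3)$ bracket computation above, which localizes the contradiction inside $\mathfrak{g}'$ using only the normalizations from Lemma \ref{trick-lemma-0}.
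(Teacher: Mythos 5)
Your proof is correct. For the $\theta=\pi/3$ subcase you take essentially the paper's route: the paper writes the same bracket in explicit $\mathfrak{su}(3)$ matrices, computes $[v,w]$ for $v\in\mathfrak{h}_{\pm\alpha'}$, and observes the off-diagonal $(1,2)$-entries $a\bar b$ must vanish because they live in the slot corresponding to $\mathfrak{g}_{\pm(\alpha-\beta)}$; your complexified root-vector version of the same computation, together with the observation that $\mathfrak{g}_{\pm(\alpha-\beta)}\subset\mathfrak{m}$ by Lemma \ref{trick-lemma-2}(1), reaches the same $a\bar b=0$. You stop at $a\bar b=0$ and invoke the Case III hypothesis directly (either $a=0$ or $b=0$ makes $\mathfrak{h}_{\pm\alpha'}$ a root plane of $\mathfrak{g}$), whereas the paper additionally extracts $|a|=|b|$ from the requirement that the diagonal part be proportional to $u$ and so lands on $a=b=0$; both are fine. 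Where you genuinely diverge is the $\theta=2\pi/3$ subcase: the paper merely remarks it is ``similar'' and would presumably re-run the $\mathfrak{su}(3)$ matrix computation with $\alpha+\beta$ now a root, while you observe that $\alpha+\beta\in\mathfrak{t}\cap\mathfrak{h}$ (since $|\alpha|=|\beta|$ forces $\alpha+\beta\perp\alpha-\beta$) and hence by Lemma \ref{trick-lemma-2}(2) must be a root of $\mathfrak{h}$, so $\alpha'$ and $2\alpha'=\alpha+\beta$ are both roots of $\mathfrak{h}$, contradicting reducedness. That argument is shorter and avoids the matrix computation altogether, at the modest cost of invoking the standard fact that root systems of compact Lie algebras are reduced. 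The two proofs buy essentially the same thing; yours is slightly more economical in the $2\pi/3$ case.
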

\begin{proof}
The condition that $\mathfrak{g}$ is not $G_2$ and the angle
between $\alpha$ and $\beta$ is $\frac{\pi}3$ or $\frac{2\pi}3$ indicates that
the subalgebra algebra $\mathfrak{g}'=\mathbb{R}\alpha+\mathbb{R}\beta+
\mathop{\sum}\limits_{m,n\in\mathbb{Z}}\mathfrak{g}_{\pm(m\alpha+n\beta)}$ is isomorphic to $A_2$, and
$\mathfrak{h}\cap\mathfrak{g}'=\mathbb{R}\alpha'+\mathfrak{h}_{\pm\alpha'}$ is isomorphic to $A_1$.
If the angle between $\alpha$ and $\beta$ is
$\frac{\pi}3$, then we can naturally present $\mathfrak{h}$ by matrices in $\mathfrak{su}(3)$,  i.e., $\mathfrak{h}\cap\mathfrak{g}'$ is linearly spanned by the following three nonzero matrices:
$$u=\left(
  \begin{array}{ccc}
    \sqrt{-1} & 0 & 0 \\
        0 & \sqrt{-1} & 0 \\
        0 &     0 & -2\sqrt{-1} \\
  \end{array}
\right)\in \mathbb{R}\alpha',\quad
v=\left(
    \begin{array}{ccc}
      0 & 0 & a \\
      0 & 0 & b \\
      -\bar{a} & -\bar{b} & 0 \\
    \end{array}
  \right)\in\mathfrak{h}_{\alpha'},
$$
and
$$w=\frac13 [u,v]=\left(
            \begin{array}{ccc}
              0 & 0 & \sqrt{-1}a \\
              0 & 0 & \sqrt{-1}b \\
              \sqrt{-1}\bar{a} & \sqrt{-1}\bar{b} & 0 \\
            \end{array}
          \right)\in\mathfrak{h}_{\alpha'}.
$$
But then the matrix
$$\frac12 [v,w] =
\left(
  \begin{array}{ccc}
    \sqrt{-1}|a|^2 & \sqrt{-1}a\bar{b} & 0 \\
    \sqrt{-1}\bar{a}b & \sqrt{-1}|b|^2 & 0 \\
    0 & 0 & -\sqrt{-1}|a|^2-\sqrt{-1}|b|^2 \\
  \end{array}
\right)
$$
can belong to $\mathfrak{h}\cap\mathfrak{g}'$ only when $a=b=0$,
which is a contradiction.

The proof for the case that the angle between $\alpha$ and $\beta$ is $\frac{2\pi}3$  is similar.
\end{proof}

Now we continue the  case by case study for Case III.
\subsection{The exceptional cases}
\noindent\textbf{The case $\mathfrak{g}=G_2$}.\quad The root system of $G_2$ can be identified with the subset
\begin{equation}
\{(\pm\sqrt{3},0),(\pm\frac{\sqrt{3}}{2},\pm\frac{3}{2}),
(0,\pm 1),(\pm \frac{\sqrt{3}}{2},\pm\frac{1}{2})\}
\end{equation}
in $\mathbb{R}^2$ with the standard inner product. Since $\mathfrak{h}=A_1$, all roots
of $\mathfrak{g}$ outside the subset
$\mathrm{pr}^{-1}(\pm\alpha')$ belong to $\mathfrak{m}$. Up to the Weyl group actions,
there are only very few choices of $\alpha$ and $\beta$, and in each case, one  can easily find
a perpendicular pair of roots $\gamma_1$ and $\gamma_2$ from $\mathfrak{m}$. Then any nonzero
vectors $v_1\in\mathfrak{g}_{\pm\gamma_1}$ and $v_2\in\mathfrak{g}_{\pm\gamma_2}$ span
a FSCS, which is a contradiction.
%
%
%
%
In summarizing, the normal homogeneous Finsler space $(M,F)$ can not be positively curved in this case.

\medskip
\noindent\textbf{ The case $\mathfrak{g}=F_4$}.\quad
The standard presentation for the root system is given in (\ref{root-system-F}).
Upon the Weyl group action, we only need to consider the following subcases:

\medskip
\textbf{Subcase 1.}\quad The angle between $\alpha$ and $\beta$ is $\pi/4$. In this case, we can assume that $\alpha=e_1+e_2$
and $\beta=e_2$. Then $\alpha'=e_2$ is a root of $\mathfrak{h}$. By (1) of Lemma
\ref{trick-lemma-2}, $\pm e_1$ are roots of $\mathfrak{m}$. The length of the vector
$\mathrm{pr}(\frac{1}{2}e_1+\frac{1}{2}e_2+\frac{1}{2}e_3+\frac{1}{2}e_4)$ is
$\frac{\sqrt{3}}{2}$ and it is not orthogonal to $\alpha'$. So it is not a root
of $\mathfrak{h}$. Thus
$\pm(\pm\frac{1}{2}e_1+\frac{1}{2}e_2+\frac{1}{2}e_3+\frac{1}{2}e_4)$ are roots of
$\mathfrak{m}$. Then one can deduce a contradiction by applying (3) of Lemma \ref{trick-lemma-2} to $\pm e_1$ and $\pm(\pm\frac{1}{2}e_1+\frac{1}{2}e_2+\frac{1}{2}e_3+\frac{1}{2}e_4)$.



\medskip
\textbf{Subcase 2.}\quad  The angle between $\alpha$ and $\beta$ is $\frac{\pi}2$. Using suitable Weyl group actions one can reduce the problem  to the following cases:
\begin{description}
\item{\rm (1)} $\alpha$ and $\beta$ are long roots. Without losing generality, we can assume that  $\alpha=e_1+e_2$,
$\beta=e_2-e_1$;
\item{\rm (2)} $\alpha$ and $\beta$ are not of the same length. Without losing generality,  we can assume that
$\alpha=e_1+e_2$ and $\beta=-e_3$;
\item{\rm (3)} $\alpha$ and $\beta$ are short roots. Without losing generality,  we can assume that $\alpha=e_1$
and $\beta=e_2$.
\end{description}
Notice that (1) is covered  in the the case of the last subcase. We will discuss the rest situations in the following.

First consider the case $\alpha=e_1+e_2$ and $\beta=-e_3$. Then
$\alpha'=\frac{1}{3}e_1+\frac{1}{3}e_2-\frac{2}{3}e_3$ is a root of $\mathfrak{h}$ with   length
$\sqrt{\frac{2}{3}}$, and
$\hat{\mathfrak{g}}_{\pm\alpha'}=\mathfrak{g}_{\pm(e_1+e_2)}+\mathfrak{g}_{\pm e_3}$.
By (1) of Lemma \ref{trick-lemma-2}, $\pm e_4$ are roots of $\mathfrak{h}$, and
$\hat{\mathfrak{g}}_{\pm e_4}=\mathfrak{g}_{\pm e_4}$. Thus  $\mathfrak{g}_{\pm e_4}\subset
\mathfrak{h}$.
Now the vector $\mathrm{pr}(-e_3+e_4)$ is not orthogonal to $\alpha'$, and have  length
$\sqrt{\frac{5}{3}}$. So it is not a root of $\mathfrak{h}$, and
$\mathfrak{g}_{\pm(e_4-e_3)}\subset \mathfrak{m}$. Therefore we have
\begin{equation}
\mathfrak{g}_{\pm e_3}=[\mathfrak{g}_{\pm e_4},\mathfrak{\mathfrak{g}}_{\pm(e_4-e_3)}]
\subset \mathfrak{m},
\end{equation}
and $\mathfrak{h}_{\pm\alpha}=\mathfrak{g}_{\pm(e_1+e_2)}$, which is a contradiction with the  assumption.

Now we assume that
$\alpha=e_1$ and $\beta=e_2$. Then $\alpha'=\frac{1}{2}(e_1+e_2)$  is a root of $\mathfrak{h}$. By (2) of Lemma \ref{trick-lemma-2},
$e_1+e_2=2\alpha'$ is also a root of $\mathfrak{h}$, which is a contradiction.

\medskip
\textbf{Subcase 3.}\quad The angle between $\alpha$ and $\beta$ is $\frac{3\pi}{4}$. Without losing generality, we assume that $\alpha=e_1+e_2$
and $\beta=-e_2$. Then $\alpha'=\frac25 e_1-\frac15 e_2$ is a root of $\mathfrak{h}$ with
 length $\frac{1}{\sqrt{5}}$. There are only two roots of $\mathfrak{g}$ in
$\mathrm{pr}^{-1}(\alpha')$, i.e., $\mathfrak{h}_{\pm\alpha'}\subset
\hat{\mathfrak{g}}_{\pm\alpha'}=\mathfrak{g}_{\pm(e_1+e_2)}+\mathfrak{g}_{\pm e_2}$.
By (2) of Lemma \ref{trick-lemma-2}, $e_3$ is a root of $\mathfrak{h}$. It is obvious
that $e_3$ is the only root of $\mathfrak{g}$ in $\mathrm{pr}^{-1}(e_3)$, i.e.,
$\mathfrak{g}_{\pm e_3}=\mathfrak{h}_{\pm e_3}$. The vector $\mathrm{pr}(e_2+e_3)$ is
not a root of $\mathfrak{h}$, since it is not orthogonal to $\alpha'$ and its length
is $\frac{\sqrt{6}}{\sqrt{5}}$. So $e_2+e_3$ is a root of $\mathfrak{m}$, i.e.,
$\mathfrak{g}_{\pm(e_2+e_3)}\subset\mathfrak{m}$. So we have
\begin{equation}
\mathfrak{g}_{\pm e_2}\subset [\mathfrak{g}_{\pm (e_2+e_3)},\mathfrak{g}_{\pm e_3}]
\subset \mathfrak{m}.
\end{equation}
Then $\mathfrak{h}_{\pm\alpha'}$ must be the root plane $\mathfrak{g}_{\pm(e_1+e_2)}$,
which is a contradiction to our assumption.


To summarize, the normal homogeneous Finsler space $(M,F)$ can not be positively curved in this case.

\medskip
\noindent\textbf{The case $\mathfrak{g}=E_6$}.\quad  The root system can be identified with the subset
\begin{equation}\label{root-system-E6}
\{\pm e_i\pm e_j|\, 1\leq i<j<6\}\cup\{\pm\frac{1}{2}e_1\pm\cdots\pm\frac{1}{2}e_5\pm
\frac{\sqrt{3}}{2}e_6, \mbox{ with odd plus signs}\}
\end{equation}
in $\mathbb{R}^6$ with the standard orthonormal basis $\{e_1,\ldots,e_6\}$.
We only need to consider the case that
%
%
the angle between $\alpha$ and $\beta$ is $\frac{\pi}{2}$. Using suitable Weyl group actions, we can assume that $\alpha=e_1+e_2$ and
$\beta=-e_1+e_2$, or $\alpha=e_1+e_2$ and $\beta=e_3+e_4$.
However, it is easy to see that for $E_6$, we have more automorphisms for the root system, which can reduce the
discussion the case that $\alpha=e_1+e_2$ and $\beta=e_2-e_1$.
Then the unit vector $\alpha'=e_2$ is a root of $\mathfrak{h}$.
By calculating the orthogonal projections of all the roots of $\mathfrak{g}$ in
$\mathfrak{t}\cap\mathfrak{h}$, one easily sees that  none of the vectors  $\mathrm{pr}(\pm\frac{1}{2}e_1\pm\cdots\pm\frac{1}{2}e_5\pm
\frac{\sqrt{3}}{2}e_6)$ is a  root of $\mathfrak{h}$.  Thus the roots of the subalgebra
$\mathfrak{k}$ are those of
$D_5$, i.e., $\mathfrak{k}\neq\mathfrak{g}$. By the discussion in Section \ref{s6},
$G/K$ admits positively curved normal homogeneous Finsler metrics, which is impossible by Proposition \ref{summary-prop}.

\medskip
To summarize, the normal homogeneous Finsler space $(M,F)$ can not be positively curved in this case.

\medskip
\noindent\textbf{The case $\mathfrak{g}=E_7$}.\quad The root system can be identified with the subset
\begin{eqnarray}
& &\{\pm e_i\pm e_j, \forall 1\leq i<j\leq 6;\pm\sqrt{2}e_7\}\nonumber\\
& &\cup\{\pm\frac{1}{2}e_1\pm\cdots\pm\frac{1}{2}e_6
\pm\frac{\sqrt{2}}{2}e_7, \mbox{ where the number of }+\frac{1}{2}\mbox{'s} \mbox{ is even}\}\label{root-system-E7}
\end{eqnarray}
in $\mathbb{R}^7$ with the standard orthonormal basis $\{e_1,\ldots,e_7\}$.
%
We only need to consider the case that the angle between  $\alpha$ and $\beta$ is $\frac{\pi}{2}$.
Using suitable Weyl group actions we can  reduce the discussion to two situations that
$\alpha=e_1+e_2$ and $\beta=e_2-e_1$, or $\alpha=e_1+e_2$ and $\beta=-e_3-e_4$.

\medskip
\textbf{Subcase 1.}\quad Assume that $\alpha=e_1+e_2$, $\beta=e_2-e_1$. Then $\alpha'=e_2$ is
 a root of $\mathfrak{h}$. By (2) of Lemma \ref{trick-lemma-2}, $\pm\sqrt{2}e_7$ are roots of $\mathfrak{h}$, and obviously $\mathfrak{h}_{\pm\sqrt{2}e_7}=\mathfrak{g}_{\pm\sqrt{2}e_7}$. On the other hand, none of the vectors among $\mathrm{pr}(\pm\frac{1}{2}e_1\pm\cdots\pm\frac{1}{2}e_6
\pm\frac{\sqrt{2}}{2}e_7)$ is orthogonal to $\alpha'$ or has the proper length.
So except $\pm\sqrt{2}e_7$, all roots of $\mathfrak{h}$ are linear combinations of the first six $e_i$s.
Then the subalgebra $\mathfrak{k}$ is
a subalgebra of $D_6\oplus A_1$, which is a proper subalgebra of $\mathfrak{g}$. By the discussion in Section \ref{s6},
$G/K$ admits positively curved normal homogeneous Finsler metrics, which is impossible by Proposition \ref{summary-prop}.

\medskip
\textbf{Subcase 2.}\quad Assume that $\alpha=e_1+e_2$ and $\beta=-e_3-e_4$. Then the unit vector $\alpha'=\frac{1}{2}e_1+\frac{1}{2}e_2-\frac{1}{2}e_3-\frac{1}{2}e_4$ is a root of
$\mathfrak{h}$. Thus none of the vectors
$\mathrm{pr}(\pm e_i\pm e_j)$, with $1\leq i\leq 4<j<7$, or
$\mathrm{pr}(\pm(\frac{1}{2}e_1+\frac{1}{2}e_2+\frac{1}{2}e_3+\frac{1}{2}e_4-e_i)\pm
(\frac{1}{2}e_5-\frac{1}{2}e_6)\pm\frac{\sqrt{2}}{2}e_7)$, with  $1\leq i\leq 4$,
is  orthogonal to $\alpha'$. Moreover,   they all have  length $\frac{\sqrt{7}}{2}$. Thus  they
are not roots of $\mathfrak{h}$.
By (2) of Lemma \ref{trick-lemma-2}, $\pm(e_5-e_6)$ are
roots of $\mathfrak{h}$. It is obvious that $\hat{\mathfrak{g}}_{\pm(e_5-e_6)}=\mathfrak{g}_{\pm(e_5-e_6)}$.
 Now by the above discussion, for any root $\gamma'\neq\pm(e_5-e_6)$ of $\mathfrak{h}$, and any root plane
$\mathfrak{g}_{\pm\gamma}$ in $\hat{\mathfrak{g}}_{\pm\gamma'}=\sum_{\mathrm{pr}(\gamma)=\gamma'}\mathfrak{g}_{\gamma}$,
$\gamma$ is orthogonal to $e_5-e_6$, and
$[\mathfrak{g}_{\pm\gamma},\mathfrak{g}_{\pm(e_5-e_6)}]=0$.
Then the subalgebra $\mathfrak{k}$ is contained in the subalgebra $\mathfrak{k}'$ of $\mathfrak{g}$ generated by $\mathfrak{t}$, and the root subspaces $\mathfrak{g}_{\pm\gamma}$ among the subspaces  $\hat{\mathfrak{g}}_{\pm\gamma'}$. Thus $\mathfrak{k}'$ has a three dimensional
ideal (an $A_1$) spanned by $\mathfrak{g}_{\pm(e_5-e_6)}$ and $e_5-e_6\in\mathfrak{t}$.
In particular, $\mathfrak{k}\subset\mathfrak{k}'\neq\mathfrak{g}$.
By the discussion in Section \ref{s6},
$G/K$ admits positively curved normal homogeneous Finsler metrics, which is impossible by Proposition \ref{summary-prop}.

To summarize, the normal homogeneous Finsler space $(M,F)$ is not positively curved in this case.

\medskip
\noindent\textbf{The case $\mathfrak{g}=E_8$}.\quad The root system can be identified with the subset
\begin{equation}
\{\pm e_i\pm e_j|\, 1\leq i<j\leq 8\}\cup\{\pm\frac{1}{2}e_1\pm\cdots\pm
\frac{1}{2}e_8 \mbox{ with  even number of plus signs}\}
\end{equation}
in $\mathbb{R}^8$ with the standard orthonormal basis
$\{e_1,\ldots,e_8\}$.
%
%
We only need to consider the case that the angle between $\alpha$ and $\beta$ is  $\frac{\pi}{2}$.
Using suitable Weyl group actions we can reduce the discussion to the two situations that
$\alpha=e_1+e_2$ and $\beta=e_2-e_1$, or $\alpha=e_1+e_2$ and $\beta=-e_3-e_4$.

\medskip
\textbf{Subcase 1.}\quad
Assume that $\alpha=e_1+e_2$ and $\beta=e_2-e_1$. Then the unit vector $\alpha'=e_2$ is a root
of $\mathfrak{h}$. Note that none of the  vectors $\mathrm{pr}(\pm\frac{1}{2}e_1\pm\cdots\pm
\frac{1}{2}e_8)$ is a root of $\mathfrak{h}$, since they
have the same length $\frac{\sqrt{7}}{2}$, and  are not
orthogonal to $\alpha'$.
Thus  $\mathfrak{k}$ is contained in the subalgebra $\mathfrak{k}'$  generated by $\mathfrak{t}$
and all root planes in the root subspaces $\hat{\mathfrak{g}}_{\pm\gamma'}$. Therefore $\mathfrak{k}'$ is contained in a subalgebra   generated by $\mathfrak{t}$ and the root subspaces
$\mathfrak{g}_{\pm(e_i-e_i)}$, which is isomorphic to $D_8$. In particular,  $\mathfrak{k}\neq\mathfrak{g}$.
By the discussion in Section \ref{s6},
$G/K$ admits positively curved normal homogeneous Finsler metrics, which is impossible by Proposition \ref{summary-prop}.

\medskip
\textbf{Subcase 2.}\quad Assume  that $\alpha=e_1+e_2$ and $\beta=-e_3-e_4$. Then the unit vector
$\alpha'=\frac12(e_1+e_2-e_3 -e_4)$ is a root of $\mathfrak{h}$.
Note that none of the vectors $\mathrm{pr}(\pm e_i\pm e_j)$ with $1\leq i\leq 4<j\leq 8$ and
$\mathrm{pr}(\frac12 (\pm e_1\pm \cdots\pm e_8))$, where there are odd number of plus signs in the first four terms and the rest four terms, is  a root
of $\mathfrak{h}$, since they are not orthogonal to $\alpha'$ and they have the same
improper length $\frac{\sqrt{7}}{2}$. So the root planes of the corresponding roots
of $\mathfrak{g}$ are all contained in $\mathfrak{m}$. Now a direct calculation shows that the orthogonal complement of the
sum of these root planes is in fact the subalgebra $\mathfrak{k}'$ linearly spanned by
$\mathfrak{t}$, the root planes of $\mathfrak{g}$ for the roots $\pm e_i\pm e_j$,
$1\leq i<j\leq 4$ or $5\leq i<j\leq 8$, and $\frac{1}{2}(\pm e_1\pm\cdots\pm e_8)$, where there
are even numbers of plus signs in the first four terms and in the rest four terms.
This subalgebra $\mathfrak{k}'$ is another $D_8$ in $\mathfrak{g}$. It contains $\mathfrak{h}$ and $\mathfrak{t}$, hence it
contains $\mathfrak{k}$, which is not equal to $\mathfrak{g}$.
By the discussion in Section \ref{s6},
$G/K$ admits positively curved normal homogeneous Finsler metrics, which is impossible by Proposition \ref{summary-prop}.

\medskip
To summarize, the normal Finsler space $(M,F)$ cannot be positively curved in this case.

\subsection{The $A$ and $D$ cases}
We will  use the standard presentation of the root systems of classical types given in Section \ref{s6}.

\medskip
\noindent\textbf{The case $\mathfrak{g}=D_n$, $n>3$}.
%
%
We only need to consider the case that the angle between  $\alpha$ and $\beta$ is $\frac{\pi}{2}$.
Usng suitable automorphism of $\mathfrak{g}$ we can reduce the problem to the two situations that
 $\alpha=e_1+e_2$ and $\beta=e_2-e_1$, or $\alpha=e_1+e_2$ and $\beta=-e_3-e_4$. Note that the last case can  appear only  when $n>4$.

\medskip
\textbf{Subcase 1.}\quad Assume that $\alpha=e_1+e_2$ and $\beta=e_2-e_1$. Then $\alpha'=e_2$ is a root of $\mathfrak{h}$ with  length $1$. We assert that  the subalgebra $\mathfrak{h}$ must be isomorphic $B_{n-1}$. To
see this, we need to determine its root systems.
By (2) of Lemma \ref{trick-lemma-2}, all the roots $\pm e_i\pm e_j$ of $\mathfrak{g}$, with $1<i<j\leq n$, are also roots
of $\mathfrak{h}$. It is obvious that
$\mathfrak{h}_{\pm e_i\pm e_j}=\mathfrak{g}_{\pm e_i\pm e_j}=
\hat{\mathfrak{g}}_{\pm e_i\pm e_j},$
when $1<i<j\leq n$. By Lemma \ref{trick-lemma-0}, for any nonzero vector
$v\in\mathfrak{g}_{\pm(e_2-e_i)}$ with $i>2$, $\mathrm{ad}(v)$ maps $\hat{\mathfrak{g}}_{\pm e_2}=\mathop{\sum}\limits_{\varepsilon=\pm 1}\mathfrak{g}_{\pm(e_2+\varepsilon e_1)}$ isomorphically onto
$\mathop{\sum}\limits_{\varepsilon=\pm 1}\mathfrak{g}_{\pm(e_i+\varepsilon e_1)}$, and it preserves the decomposition $\mathfrak{g}=\mathfrak{h}+\mathfrak{m}$.
Thus for each $i\geq 2$, $e_i$ is a root of $\mathfrak{h}$, and
$\hat{\mathfrak{g}}_{\pm e_i}=\mathop{\sum}\limits_{\varepsilon=\pm 1}\mathfrak{g}_{\pm(e_i+\varepsilon e_1)}$. This argument has exhausted all possible
roots of $\mathfrak{h}$ from the projections of roots of $\mathfrak{g}$. Hence
the root system of $\mathfrak{h}$ is  $B_{n-1}$ (see (\ref{root-system-B})).

From the above arguments, we see that $\mathfrak{h}$ is totally determined by the choice of
$\mathfrak{h}_{\pm e_2}$ in $\hat{\mathfrak{g}}_{\pm e_2}=
\mathop{\sum}\limits_{\varepsilon=\pm 1}\mathfrak{g}_{\pm(e_2+\varepsilon e_1)}$.
We only need to discuss the problem within the subalgebra
$\mathfrak{g}'=\mathbb{R}e_1+\mathbb{R}e_2+\mathfrak{g}_{\pm(e_1+e_2)}+
\mathfrak{g}_{\pm(e_2-e_1)}$, which is isomorphic to $A_1\oplus A_1$, where  the first (resp. second) $A_1$ factor is algebraically
generated by
$\mathfrak{g}_{\pm(e_1+e_2)}$ (resp. $\mathfrak{g}_{\pm(e_2-e_1)}$). Now we will prove a  lemma showing that a suitable $\mathrm{Ad}(\exp(\mathbb{R}e_1+\mathbb{R}e_2))$-action, which preserves $\mathfrak{t}$ and all the roots,
gives an isomorphism between any of the two possible subalgebras $\mathfrak{h}$.

We first give a definition. For a compact Lie algebra of  type $A_1$ endowed with a bi-invariant metric, we call an orthogonal basis $\{u_1,u_2,u_3\}$ standard, if $u_1,u_2,u_3$  have the same length, and they satisfying the condition $[u_i,u_j]=u_k$ for $(i,j,k)=(1,2,3)$, $(2,3,1)$ or $(3,1,2)$. The length $c$ of the vectors in a standard basis is a
constant which only depends on the the scale of the bi-invariant inner product. In fact, the bracket of any two
orthogonal vectors with  length $c$ is also a vector with a length $c$.

\begin{lemma}\label{lemma-double-A_1}
Let $\mathfrak{g}'=\mathfrak{g}_1\oplus\mathfrak{g}_2=A_1\oplus A_1$ be
endowed with a bi-invariant inner product. Assume that $\mathfrak{t}'$ is a Cartan
subalgebra, and $\mathfrak{h}'$ and $\mathfrak{h}''$ are subalgebras isomorphic to $A_1$
satisfying the following conditions:
\begin{description}
\item{\rm (1)} $\mathfrak{h}'\cap\mathfrak{t}'=\mathfrak{h}''\cap\mathfrak{t}'$ is one
dimensional;
\item{\rm (2)} $\mathfrak{h}'\cap\mathfrak{g}_i=\mathfrak{h}''\cap\mathfrak{g}_i=0$, $i=1,2$.
\item{\rm (3)} $\mathfrak{h}'\cap(\mathfrak{h'}\cap\mathfrak{t}')^{\perp}\subset
\mathfrak{t}'^{\perp}$, and $\mathfrak{h}''\cap(\mathfrak{h''}\cap\mathfrak{t}')^{\perp}\subset
\mathfrak{t}'^{\perp}$.
\end{description}
Then there is an $\mathrm{Ad}(\exp \mathfrak{t}')$-action which maps $\mathfrak{h}'$
to $\mathfrak{h}''$.
\end{lemma}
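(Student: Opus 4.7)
The plan is to choose standard bases $\{A_1,A_2,A_3\}$ of $\mathfrak{g}_1$ and $\{B_1,B_2,B_3\}$ of $\mathfrak{g}_2$ so that $\mathfrak{t}'=\mathbb{R}A_1+\mathbb{R}B_1$, and then reduce the lemma to a concrete identification on the $4$-dimensional subspace $\mathfrak{t}'^\perp=\mathrm{span}(A_2,A_3,B_2,B_3)$. By conditions (1) and (2), both $\mathfrak{h}'\cap\mathfrak{t}'$ and $\mathfrak{h}''\cap\mathfrak{t}'$ coincide with $\mathbb{R}w$ for a fixed $w=pA_1+qB_1$ with $p,q\neq 0$, and condition (3) gives that $V':=\mathfrak{h}'\cap(\mathbb{R}w)^\perp$ is $2$-dimensional and contained in $\mathfrak{t}'^\perp$, and similarly for $V''$. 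Since $\mathfrak{h}'$ is a subalgebra, $V'$ is $\mathrm{ad}(w)$-invariant, and in fact $[V',V']\subset\mathbb{R}w$ because $\mathfrak{h}'\simeq A_1$ has a unique $1$-dimensional Cartan subspace.

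The first computational step is to diagonalize $\mathrm{ad}(w)$ on $\mathfrak{t}'^\perp$. Equipping $(A_2,A_3)$ and $(B_2,B_3)$ with the complex structures $J_1A_2=A_3$ and $J_2B_2=B_3$, the operator $\mathrm{ad}(w)$ becomes multiplication by $ipf_1$ on the first summand and by $iqf_2$ on the second, where $f_i$ are the structure constants of $\mathfrak{g}_i$. By condition (2), $V'$ projects nontrivially onto both summands, which forces $|pf_1|=|qf_2|$; after possibly flipping the sign of $B_1,B_3$ (which preserves the standard basis conditions) we may assume $pf_1=qf_2=:\omega$, so that $\mathfrak{t}'^\perp\cong\mathbb{C}^2$ and $\mathrm{ad}(w)=i\omega\cdot\mathrm{Id}$. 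The real $\mathrm{ad}(w)$-invariant $2$-planes that meet both factors nontrivially are exactly the complex lines $\mathbb{C}(u,v)\subset\mathbb{C}^2$ with $u,v\neq 0$.

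The second step is to impose the bracket condition $[V',V']\subset\mathbb{R}w$. A direct computation using $[A_2,A_3]=f_1A_1$ and $[B_2,B_3]=f_2B_1$ shows that if $V'=\mathbb{C}(u_0,v_0)$, then $[V',V']$ is spanned by $|u_0|^2f_1A_1+|v_0|^2f_2B_1$, so belonging to $\mathbb{R}(pA_1+qB_1)$ is equivalent to the single norm constraint $|u_0|f_1=|v_0|f_2$. Hence, after this constraint the admissible $V'$'s form a single circle parametrized by the relative phase $\theta=\arg v_0-\arg u_0\in S^1$.

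Finally, $\mathrm{Ad}(\exp(sA_1+tB_1))$ fixes $\mathfrak{t}'$ and sends $(u,v)\mapsto(e^{isf_1}u,e^{itf_2}v)$, shifting the relative phase by $tf_2-sf_1$. Since $f_1,f_2>0$, this map from $(s,t)$ to phase shifts is surjective onto $S^1$, so we can choose $(s,t)$ making $V'$ land on $V''$; as the element also fixes $w$, it carries $\mathfrak{h}'=\mathbb{R}w+V'$ onto $\mathfrak{h}''$. The main technical obstacle I anticipate is the bookkeeping in Step 1 concerning signs (the case $pf_1=-qf_2$) and confirming that flipping a standard basis to normalize this sign does not change the required $\mathrm{Ad}(\exp\mathfrak{t}')$-orbit; this should be dispatched by observing that such a sign flip is itself realized by an element of $\exp\mathfrak{t}'$ (or a limit thereof) acting on $\mathfrak{t}'^\perp$, up to the action we are already using.
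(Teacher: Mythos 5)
Your proposal is correct and follows essentially the same route as the paper's proof, only phrased in a more structural language. Where the paper builds standard bases $\{u_1,u_2,u_3\}$, $\{v_1,v_2,v_3\}$ one vector at a time, adapting the choice of $v_2$ to $\mathfrak{h}'$ and then showing the free parameter $b$ must equal $1$ (so that only the angle of rotation in $\mathfrak{g}_2\cap\mathfrak{t}'^{\perp}$ remains free), you encode the same data by identifying $\mathfrak{t}'^{\perp}\cong\mathbb{C}^2$ and classifying the $\mathrm{ad}(w)$-invariant two-planes as complex lines $\mathbb{C}(u_0,v_0)$; the paper's $b=1$ computation is exactly your norm constraint from $[V',V']\subset\mathbb{R}w$, and the residual phase circle is exactly the paper's $\mathrm{Ad}(\exp(tv_1))$ rotation. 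Both amount to the same rank-two computation, so the substance is identical; your version has the small advantage of making explicit why nothing other than a phase can differ, while the paper's version is slightly more economical.

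One remark on your final worry: the sign normalization $pf_1=qf_2$ (versus $pf_1=-qf_2$) does not need to be realized by an element of $\mathrm{Ad}(\exp\mathfrak{t}')$, nor by a limit of such. It is merely a choice of orthonormal basis $\{B_1,B_2,B_3\}$ for $\mathfrak{g}_2$ used to set up coordinates; it affects neither $\mathfrak{h}'$ nor $\mathfrak{h}''$ nor the subgroup $\exp\mathfrak{t}'$, so there is no compatibility to check. Also, strictly speaking the norm constraint you get from $[V',V']\subset\mathbb{R}(pA_1+qB_1)$ is $q|u_0|^2f_1=p|v_0|^2f_2$; once one has normalized so that $pf_1=qf_2$ (and can take $p,q>0$), this simplifies to the relation you wrote, but one should carry the $p,q$ through honestly. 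Neither point affects the validity of the argument.
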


\begin{proof} Let $c_1$ and $c_2$ be the length of standard basis vectors for $\mathfrak{g}_1$ and $\mathfrak{g}_2$, respectively. We can choose standard bases $\{u_1,u_2,u_3\}$ and $\{v_1,v_2,v_3\}$
for $\mathfrak{g}_1$ and $\mathfrak{g}_2$ as follows. First, we choose vectors
$u_1$ and $v_1$ from $\mathfrak{t}'\cap\mathfrak{g}_1$ and $\mathfrak{t}'\cap\mathfrak{g}_2$ with lengths $c_1$ and $c_2$, respectively. Then we freely choose any vector $u_2$ of length $c_1$ from
$\mathfrak{t}'^\perp\cap\mathfrak{g}_1$ and then set $u_3=[u_1,u_2]$. By (2) and
(3) in the lemma, we can find a vector of
$\mathfrak{h}$ which belongs to $ u_2+\mathfrak{g}_2\cap\mathfrak{t}'^{\perp}$. Then its
$\mathfrak{g}_2$-factor is not $0$, which can be positively scaled to have  length $c_2$, and we can set this vector to be $v_2$. Finally, we set
$v_3=[v_1,v_2]$.
Now the subalgebra $\mathfrak{h}'$ is linearly spanned by $u_1+av_1$, $u_2+bv_2$, and their
bracket $u_3+abv_3$, where $a$ is a fixed nonzero constant and $b>0$. However, as a subalgebra, it must satisfy $[u_2+bv_2,u_3+abv_3]=u_1+av_1$, hence $b=1$.
We can similarly construct another standard basis $\{v'_1,v'_2,v'_3\}$ for $\mathfrak{h}''$. Notice that $v'_1=v_1$. It is easy to see that there exists  a real number $t$ such that $\mathrm{Ad}(\exp(tv_1))$ maps
$v_2$ to $v'_2$ and $v_3$ to $v'_3$. Then the above calculation indicates that  it also maps
$\mathfrak{h}'$ to $\mathfrak{h}''$.
\end{proof}

In conclusion,in this case $(M, F)$ must be equivalent to a symmetric sphere $S^{2n-1}=\mathrm{SO}(2n)/\mathrm{SO}(2n-1)$,
with $\mathfrak{g}=D_n$ and $\mathfrak{h}=B_{n-1}$. The argument is also valid for $\mathfrak{g}=D_3=A_3$.

\medskip
\textbf{Subcase 2.}\quad Assume that $n>4$, and $\alpha=e_1+e_2$, $\beta=-e_3-e_4$. Then $\alpha'=e_1+e_2-e_3-e_4$
is a root of $\mathfrak{h}$. None of the vectors $\mathrm{pr}(\pm e_i\pm e_j)$ with
$1\leq i\leq 4 < j\leq n$ is orthogonal to $\alpha'$ and they all have the same
length $\frac{\sqrt{7}}{2}$. By similar arguments as before, any root of the subalgebra $\mathfrak{k}$ is either a linear combination of the elements $e_i$ with $i\leq 4$, or a linear combination of the elements $e_i$ with $i>4$.
Thus $\mathfrak{k}\neq\mathfrak{g}$ and $G/K$ have positive curved normal homogeneous
Finsler metrics. But this is a contradiction with Proposition \ref{summary-prop}.

To summarize, in this case, the positively curved normal homogeneous space $(M,F)$ must be equivalent
to the symmetric sphere $S^{2n-1}=\mathrm{SO}(2n)/\mathrm{SO}(2n-1)$, where $n>3$.

\medskip
\noindent\textbf{The case $\mathfrak{g}=A_n$}.
%
%
We only need to consider the case that the angle between  $\alpha$ and $\beta$ is $\frac{\pi}{2}$ (hence $n>2$). By suitable Weyl group actions, we can assume that $\alpha=e_1-e_2$ and
$\beta=e_3-e_4$. Then the unit vector $\alpha'=\frac12 e_1-\frac12 e_2+\frac12 e_3-\frac12 e_4$ is a root of $\mathfrak{h}$.

If $n>4$, then none of the vectors $\mathrm{pr}(\pm(e_i-e_j))$, with
$1\leq i\leq 4<j\leq n+1$, is orthogonal to $\mathfrak{h}$, and they all have the same  length
$\frac{\sqrt{7}}{2}$. Thus none of the above vectors is  a root of $\mathfrak{h}$. On the other hand, a root of $\mathfrak{h}$
 is either a linear combinations of  the vectors $e_i$ with $i\leq 4$, or a linear combination of
 the vectors $e_j$ with $j>4$. The same assertion also holds for any root of the subalgebra $\mathfrak{k}$. Hence $\mathfrak{k}$ is
 contained in a subalgebra of the type $A_3\oplus A_{n-4}\oplus\mathbb{R}\in\mathfrak{g}$.
 But the pair $(\mathfrak{g},\mathfrak{k})$ is not covered in (2) of Proposition \ref{summary-prop}. This is a contradiction.
 Thus in this case, the corresponding
 normal homogeneous Finsler space $(M, F)$ can not be positively curved.

 If $n=3$, then $\mathfrak{g}$ can also be viewed as $D_3$, with $\alpha=e_1+e_2$ and $\beta=e_1-e_2$. We have shown in the above that the positively curved normal homogeneous Finsler space $(M,F)$ is equivalent to the symmetric sphere $S^5=\mathrm{SO}(6)/\mathrm{SO}(5)$.

Suppose $n=4$. Let $\mathfrak{g}'$ be the subalgebra $A_3$ corresponding to the first four
roots $e_i$, $1\leq i\leq 4$. By the arguments for $n>4$, one easily sees that all the roots of $\mathfrak{k}$, which is generated by $\mathfrak{h}$ and $\mathfrak{t}$, must belong to $\mathfrak{g}'$.
Then by Proposition \ref{summary-prop}, we have $\mathfrak{k}=\mathfrak{g}'\oplus\mathbb{R}$.

By similar arguments as in Subcase 2 for $D_n$, we can see $\mathfrak{h}=\mathfrak{h}'\oplus\mathbb{R}$, in which $\mathfrak{h}'$ is
a subalgebra of $\mathfrak{g}'$ isomorphic to $B_2=C_2$ with the following roots
$$\{\pm (e_1-e_4),\pm(e_2-e_3),\pm(\frac12 e_1-\frac12 e_2+\frac12 e_3-\frac12 e_4),\pm(\frac12 e_1+\frac12 e_2-\frac12 e_3-\frac12 e_4)\}.$$
Moreover,  up to inner isomorphisms of $\mathfrak{g}'$, the subalgebra $\mathfrak{h}'$ is uniquely determined. Berger \cite{Ber61} has proved that, in this case,  $\mathrm{SU}(5)/\mathrm{Sp}(2)S^1$ admits a positively curved
normal homogeneous Riemannian metric. It is generally called a Berger's space.
Another Berger's space $\mathrm{Sp}(2)/\mathrm{SU}(2)$ will appear later.

\medskip
To summarize, in this case, $(M,F)$ is positively curved if and only if  it is equivalent
to the symmetric normal homogeneous sphere $S^5=\mathrm{SO}(6)/\mathrm{SO}(5)$, or
the  Berger's space $\mathrm{SU}(5)/\mathrm{Sp}(2)S^1$.

\subsection{The $B$ and $C$ cases}

We keep all the notations as above and use the standard presentations
for the root systems as in the last section.

\medskip
\noindent\textbf{The case $\mathfrak{g}=B_n$, $n>1$}.
Using suitable Weyl group actions we can reduce the the consideration to
 the following cases.

\medskip
\textbf{Subcase 1.}\quad The angle between $\alpha$ and $\beta$ is $\frac{\pi}{4}$. In this case, we can assume that $\alpha=e_1+e_2$ and $\beta=e_2$. Then $\alpha'=e_2$ is a root of $\mathfrak{h}$ with
$\hat{\mathfrak{h}}_{\pm e_2}=\mathfrak{g}_{\pm(e_2-\epsilon e_1)}+\mathfrak{g}_{\pm e_2}+\mathfrak{g}_{\pm(e_2+e_1)}$.
%
Denote $\mathfrak{g}'=\mathbb{R}e_1+\mathbb{R}e_2+\sum_{a,b}
\mathfrak{g}_{\pm(a e_1+b e_2)}$ and $\mathfrak{g}''=\mathbb{R}e_1+\mathfrak{g}_{\pm e_1}$. They are Lie algebras $B_2$ and $A_1$ respectively.
We can use real matrices in $\mathfrak{so}(5)$ to give a basis of $\mathfrak{h}\cap\mathfrak{g}'$, i.e.,
$$u=\left(
      \begin{array}{ccccc}
        0 & 0 & 0 & 0 & 0 \\
        0 & 0 & 0 & 0 & 0 \\
        0 & 0 & 0 & 0 & 0 \\
        0 & 0 & 0 & 0 & -1 \\
        0 & 0 & 0 & 1 & 0\\
      \end{array}
    \right)\in\mathbb{R}e_2,\quad
v=\left(
    \begin{array}{ccccc}
      0 & 0 & 0 & -a & -a' \\
      0 & 0 & 0 & -b & -b' \\
      0 & 0 & 0 & -c & -c' \\
      a & b & c & 0 & 0 \\
      a' & b' & c' & 0 & 0 \\
    \end{array}
  \right)\in\mathfrak{h}_{\pm e_2}
$$
and
$$w=[u,v]=\left(
    \begin{array}{ccccc}
      0 & 0 & 0 & a' & -a \\
      0 & 0 & 0 & b' & -b \\
      0 & 0 & 0 & c' & -c \\
      -a' & -b' & -c' & 0 & 0 \\
      a & b & c & 0 & 0 \\
    \end{array}
  \right)\in\mathfrak{h}_{\pm e_2}.
$$
 Since $\mathfrak{h}\cap\mathfrak{g}'$ is a Lie subalgebra, we have $[v,w]\in\mathfrak{h}\cap\mathfrak{g}'$. A direct calculation then shows that
$(a,b,c)$ and $(a',b',c')$ are linearly dependent vectors. Using a suitable element
$l\in\mathrm{Ad}(\exp A_1)$, which is represented by a conjugation by a matrix of the form
$\mathrm{diag}(P,I)$, where $P\in\mathrm{SO}(3)$, and $I$ is the $2\times 2$ identity
matrix, we can make $b=c=b'=c'=0$ in $v$. Let $u'\in\mathfrak{g}_{\pm(e_2+e_1)}$ and
$v'\in\mathfrak{g}_{\pm(e_2-e_1)}$ be a pair of nonzero vectors. Then $l^{-1}(u')$ and
$l^{-1}(v')$ are vectors of $\mathfrak{m}\cap\mathfrak{g}'$. Since $[u', v']=0$, we have $[l^{-1}(u'),l^{-1}(v')]=0$. Now $l^{-1}(u')$ and $l^{-1}(v')$, together with all the elements $e_i$ with  $i>2$,
span a FSCS. This is a contradiction.

To summarize, there is no positively curved normal homogeneous Finsler space in this subcase.

%

\medskip
In the following subcases we  assume that the angle between $\alpha$ and $\beta$ is $\frac{\pi}2$.
Then using suitable Weyl group actions one can reduce the discussion to one of the following situations:
\begin{description}
\item{\rm (1)} $\alpha$ and $\beta$ are long roots. In this case, we can assume that either $\alpha=e_1+e_2$ and $\beta=e_2-e_1$, or $\alpha=e_1+e_2$ and $\beta=-e_3-e_4$;
\item{\rm (2)} $\alpha$ and $\beta$ are not of the same length.  Then we can assume
    $\alpha=e_1+e_2$ and $\beta=-e_3$;
\item{\rm (3)} $\alpha$ and $\beta$ are short roots. Then we can assume that $\alpha=e_1$
and $\beta=e_2$.
\end{description}
We now consider these situations case by case.

\medskip
\textbf{Subcase 2.}\quad The situation that $\alpha=e_1+e_2$ and $\beta=e_2-e_1$ has
been covered by the discussion in the last subcase, and in this case there does not exist any positively curved normal homogeneous spaces.

Now we assume that
$\alpha=e_1+e_2$ and $\beta=-e_3-e_4$. Then the unit vector
$\alpha'=\frac12 e_1+\frac12 e_2-\frac12 e_3-\frac12 e_4$ is a root of $\mathfrak{h}$.
If $n>4$, then none of the vectors $\pm e_i\pm e_j$ with $1\leq i\leq 4<j\leq n$ is  orthogonal to
$\alpha'$ and each of them has the same length $\frac{\sqrt{7}}{2}$. therefore none of the above roots  it is  a root of $\mathfrak{h}$.
The subalgebra $\mathfrak{k}$ is then
contained in the subalgebra $B_4\oplus B_{n-4}$ or $B_4\oplus A_1$. The coset space $G/K$
can admit positively curved normal homogeneous Finsler metrics. This is a contradiction with Proposition \ref{summary-prop}. Hence in this case the normal homogeneous space cannot be positively curved.

If $n=4$, then by (2) of Lemma \ref{trick-lemma-2}, all the roots $\pm (e_i-e_j)$ of $\mathfrak{g}$, with $1\leq i<j\leq 4$,  are roots of $\mathfrak{h}$, and we have
$\mathfrak{h}_{\pm(e_i-e_j)}=\mathfrak{g}_{\pm(e_i-e_j)}=\hat{\mathfrak{g}}_{\pm(e_i-e_j)}$
for $1\leq i<j\leq 4$. For any $v\in\mathfrak{g}_{\pm(e_2-e_3)}$, $\mathrm{ad}(v)$
maps $\hat{\mathfrak{g}}_{\pm\frac12(e_1+e_2-e_3-e_4)}=\mathfrak{g}_{\pm(e_1+e_2)}+
\mathfrak{g}_{\pm(e_3+e_4)}$ isomorphically onto
$\mathfrak{g}_{\pm(e_1+e_3)}+\mathfrak{g}_{\pm(e_2+e_4)}$, which preserves the decomposition $\mathfrak{g}=\mathfrak{h}+\mathfrak{m}$. So $\pm\frac12(e_1+e_3-e_2-e_4)$ is also a root of $\mathfrak{h}$, and $\hat{\mathfrak{g}}_{\pm\frac12(e_1+e_3-e_2-e_4)}=
\mathfrak{g}_{\pm(e_1+e_3)}+\mathfrak{g}_{\pm(e_2+e_4)}$. The same assertion also holds for $\pm\frac12(e_1+e_4-e_2-e_3)$.
On the other hand,
 none of the  vectors
$\mathrm{pr}(\pm e_i)$ is  orthogonal to the unit root $\alpha'$ of $\mathfrak{h}$, and they
all have the same
length $\frac{\sqrt{3}}{2}$. Thus they are not roots of $\mathfrak{h}$.
In the above, we have determined all the roots of $\mathfrak{h}$ and showed that $\mathfrak{h}=B_3$.

There is a known example in this subcase, which provide the homogeneous sphere
$S^{15}=\mathrm{Spin}(9)/\mathrm{Spin}(7)$. Up to equivalence, this is the only one normal coset space in
this subcase. The subalgebra $\mathfrak{h}$ is totally determined by the choice of
$\mathfrak{h}_{\pm(e_1+e_2-e_3-e_4)}$. Consider the subalgebra
$\mathfrak{g}'=\mathbb{R}(e_1+e_2)+\mathbb{R}(e_3+e_4)+\mathfrak{g}_{\pm(e_1+e_2)}+
\mathfrak{g}_{\pm(e_3+e_4)}$ which is an $A_1\oplus A_1$ containing
$\hat{\mathfrak{g}}_{\pm(e_1+e_2-e_3-e_4)}$. By Lemma \ref{lemma-double-A_1},
$\mathfrak{h}\cap\mathfrak{g}'$, as well as $\mathfrak{h}_{\pm(e_1+e_2-e_3-e_4)}$,
is uniquely determined up to inner isomorphisms $\mathrm{Ad}(\exp(t(e_3+e_4)))$.
Note that in the paper \cite{VZ09}, the authors present the $\mathrm{Spin}(9)$-invariant
 Riemannian metric on $S^{15}$ as a family of Riemannian metrics
$g_t$ (up to homothety), and show that the Riemannian metric has positive curvature if and only if $0<t<\frac43$.
The normal Riemanniam metric is the metric corresponding to $t=\frac12$.

Therefore,  in this case, $(M,F)$ is
equivalent to the normal homogeneous sphere $S^{15}=\mathrm{Spin}(9)/\mathrm{Spin}(7)$.

\medskip
\textbf{Subcase 3.}\quad We assume that $\alpha=e_1+e_2$ and $\beta=-e_3$. Then
$\alpha'=\frac13 e_1+\frac13 e_2-\frac23 e_3$ is a root of $\mathfrak{h}$ with
 length $\frac{\sqrt{2}}{\sqrt{3}}$.
If $n>3$, then none of the vectors $\mathrm{pr}(\pm e_i\pm e_j)$, with
 $1\leq i\leq 3<j\leq n$,  is  orthogonal to $\alpha'$, and they all have the same  length $\sqrt{\frac53}$. Therefore none of the above roots is a  root of $\mathfrak{h}$. So any root of the subalgebra $\mathfrak{k}$ is  either a   linear combinations of $e_1$, $e_2$ and $e_3$, or  a   linear combinations of the elements $e_i$ with $i>3$. In particular, $\mathfrak{k}\neq\mathfrak{g}$, and the corresponding coset space $G/K$ admits positively curved normal homogeneous Finsler metrics. By Proposition \ref{summary-prop}, it must be
isomorphic to $D_n$, which is a contradiction. So the corresponding normal homogeneous Finsler space cannot be positively curved in this case.

If $n=3$, then $\mathfrak{h}=G_2$. In fact, by
(2) of Lemma \ref{trick-lemma-2}, $\pm (e_i-e_j)$ for $1\leq i<j\leq 3$
are long roots of $\mathfrak{h}$, with $\mathfrak{h}_{\pm(e_i-e_j)}=\mathfrak{g}_{\pm(e_i-e_j)}$. Then for any nonzero vector $v\in\mathfrak{g}_{\pm(e_2-e_3)}$, the linear map $\mathrm{ad}(v)$ sends  $\hat{\mathfrak{g}}_{\pm\alpha'}=\mathfrak{g}_{\pm(e_1+e_2)}+\mathfrak{g}_{\pm e_3}$
isomorphically onto $\mathfrak{g}_{\pm(e_1+e_3)}+\mathfrak{g}_{\pm e_2}$, and preserves the decomposition
$\mathfrak{g}=\mathfrak{h}+\mathfrak{m}$. Thus $\pm(\frac13 e_1-\frac23 e_2+\frac13 e_3)$
are roots of $\mathfrak{h}$, with
$\hat{\mathfrak{g}}_{\pm(\frac13 e_1-\frac23 e_2+\frac13 e_3)}=\mathfrak{g}_{\pm e_2}+\mathfrak{g}_{\pm(e_1+e_3)}$. The same assertion also holds for
$\pm(-\frac23 e_1+\frac13 e_2+\frac13 e_3)$. We have found all six long roots and six
short roots for $\mathfrak{h}$ and they exhaust all possible roots of $\mathfrak{h}$.
This implies that $\mathfrak{h}=G_2$.

The subalgebra $\mathfrak{h}$ is totally determined by the choice of $\mathfrak{h}_{\pm\alpha'}\subset\hat{\mathfrak{g}}_{\pm\alpha'}$.
Denote $\mathfrak{g}'=\mathbb{R}(e_1+e_2)+\mathbb{R}e_3+\mathfrak{g}_{\pm(e_1+e_2)}
+\mathfrak{g}_{\pm e_3}$, which is a  subalgebra of the type $A_1\oplus A_1$ containing
$\hat{\mathfrak{g}}_{\pm\alpha'}$.
By Lemma \ref{lemma-double-A_1}, we have $\mathfrak{h}\cap\mathfrak{g}'$. Meanwhile,
$\mathfrak{h}_{\pm\alpha'}$ is uniquely determined up to inner isomorphisms. So up to
equivalence, the only positively curved normal homogeneous Finsler space in this subcase is the homogeneous sphere $S^7=\mathrm{Spin}(7)/\mathrm{G}_2$.

\medskip
\textbf{Subcase 4.}\quad We assume that $\alpha=e_1$ and $\beta=e_2$. Then
$\alpha'=\frac12 e_1+\frac12 e_2$ is a root of $\mathfrak{h}$. But by (2) of Lemma
\ref{trick-lemma-2}, $e_1+e_2=2\alpha'$ is also a root of $\mathfrak{h}$. This
is a contradiction. So there does not exist positively curved normal homogeneous space in
this subcase.

\medskip
\textbf{Subcase 5.}\quad There is one more subcase left where the angle between $\alpha$ and $\beta$ is $\frac{3\pi}{4}$. We can assume that $\alpha=e_1+e_2$ and
$\beta=-e_2$. Then $\alpha'=\frac25 e_1-\frac15 e_2$ is a root of $\mathfrak{h}$ with  length
$\frac1{\sqrt{5}}$. If $n>2$, then none of the  vectors
$\mathrm{pr}(\pm e_i\pm e_j)$, with  $1\leq i\leq 2<j\leq n$,  is  orthogonal to $\alpha'$
and  each of them has  length $\frac{\sqrt{6}}{\sqrt{5}}$ or $\frac{\sqrt{7}}{\sqrt{5}}$. Thus none of these vectors can  be a root of $\mathfrak{h}$.
On the other hand, each root of the subalgebra $\mathfrak{k}$ is either a linear combination of $e_1$ and $e_2$, or a linear combination of the elements $e_i$, with $i>2$.
So $\mathfrak{k}$ is contained in $B_2\oplus B_{n-2}$ or $B_2\oplus A_1$. In particular, $\mathfrak{k}\neq\mathfrak{g}$ and the corresponding  coset space $G/K$ admits positively curved normal homogeneous metrics. However, according to Proposition \ref{summary-prop},
$G/K$ can not be positively curved normal homogeneous, which is a contradiction. This implies that in this subcase $(M, F)$ cannot be positively curved.

If $n=2$, then any linearly independent commuting pair in $\mathfrak{m}$ span a FSCS. Thus in the decomposition of the Lie algebra of a
positively curved normal homogeneous Finsler space, the subspace $\mathfrak{m}$  can not have a commuting pair. However, this  implies exactly that  the corresponding normal homogeneous Riemannian metric is positive curved.
Up to equivalence, there is only one such space, that is, the coset space $\mathrm{Sp}(2)/\mathrm{SU}(2)$ found by
Berger in \cite{Ber61}.

\medskip
To summarize, in this case, the positively curved normal homogeneous space $(M,F)$ is equivalent to $S^7=\mathrm{Spin}(7)/G_2$, $S^{15}=\mathrm{Spin}(9)/\mathrm{Spin}(7)$,
or the Berger's space $\mathrm{Sp}(2)/\mathrm{SU}(2)$,.

\medskip
\noindent\textbf{The case $\mathfrak{g}=C_n$, $n>2$}. Upon the Weyl group
action, we only need to consider
the following subcases.

\medskip
\textbf{Subcase 1.}\quad The angle between  $\alpha$ and $\beta$ is  $\frac{\pi}{4}$. We can assume that $\alpha=2e_1$ and $\beta=e_1+e_2$. Then $\alpha'=e_1+e_2$ is a root of $\mathfrak{h}$ with  length
$\sqrt{2}$. By (1) of Lemma \ref{trick-lemma-2},
we have $\mathfrak{g}_{\pm (e_1-e_2)}\subset\mathfrak{m}$. Now none of the vectors $\mathrm{pr}(\pm(e_1-e_3))=\mathrm{pr}(\pm(e_2-e_3))$ is  orthogonal to $\alpha'$,
and each has  length $\frac{\sqrt{3}}{\sqrt{2}}$. Thus none of them  is a root of $\mathfrak{h}$, and
we have $\mathfrak{g}_{\pm(e_1-e_3)}$, $\mathfrak{g}_{\pm(e_2-e_3)}\subset\mathfrak{m}$. Then we can apply (3) of Lemma \ref{trick-lemma-2} to $\pm(e_1-e_2)$, $\pm(e_2-e_3)$ and $\pm(e_1-e_3)$ to get a contradiction. Thus the corresponding normal space cannot be positively curved in this subcase.

\medskip
 In the following subcases we consider the situation that the angle between $\alpha$ and $\beta$ is $\frac{\pi}2$. Using  suitable Weyl group actions, one can  reduce the discussion to the following cases:
\begin{description}
\item{\rm (1)} $\alpha$ and $\beta$ are long roots, and we can assume that $\alpha=2e_1$
and $\beta=2e_2$;
\item{\rm (2)} $\alpha$ and $\beta$ are not of the same length, and we can assume
that $\alpha=e_1+e_2$ and $\beta=-2e_3$;
and when $\alpha$ and $\beta$ are short roots, we assume that either
\item{\rm (3)} $\alpha=e_1+e_2$ and $\beta=e_2-e_1$;
or
\item{\rm (4)}  $\alpha=e_1+e_2$ and $\beta=-e_3-e_4$.
\end{description}
Notice that (1) has been covered by the discussions in the previous subcase. Now we  discuss
the other cases in the following three subcases.

\medskip
\textbf{Subcase 2.}\quad Assume that $\alpha=e_1+e_2$ and $\beta=-2e_3$. Then
$\alpha'=\frac23(e_1+e_2-e_3)$ is a root of $\mathfrak{h}$
with  length
$\frac{2}{\sqrt{3}}$. By (2) of Lemma \ref{trick-lemma-2}, $\pm (e_1-e_2)$ are
roots of $\mathfrak{h}$.
Now none of the  vectors $\mathrm{pr}(\pm 2 e_i)$, $i=1, 2$,
is  orthogonal to
$\alpha'$, and each of them has length $\frac{\sqrt{30}}{3}$. Thus none of these vectors is a root of
$\mathfrak{h}$, i.e., $\mathfrak{g}_{\pm 2 e_i}\subset \mathfrak{m}$ for $i=1,2$.
Take any nonzero $u\in\mathfrak{g}_{\pm 2 e_1}$, $v\in\mathfrak{g}_{\pm 2 e_2}$, and two linearly
independent $w_1$ and $w_2$ from $\mathfrak{g}_{\pm 2 e_3}$. Then there are two Cartan
subalgebras, $\mathfrak{t}_1$ spanned by $u$, $v$, $w_1$ and all the elements $e_i$ with $i>3$;
and $\mathfrak{t}_2$ with the same linear generators except that  $w_1$ is changed by $w_2$. Then $\mathfrak{s}=\mathfrak{t}_1\cap\mathfrak{t}_2$ is a flat splitting subalgebra. This
is a contradiction to Theorem \ref{thm-4-2}.
%

\medskip
\textbf{Subcase 3.}\quad We assume that $\alpha=e_1+e_2$ and $\beta=e_2-e_1$. Then
$\alpha'=e_2$ is a root of $\mathfrak{h}$. By (2) of Lemma \ref{trick-lemma-2}, the root
$2e_2=2\alpha'\in\mathfrak{t}\cap\mathfrak{h}$ of $\mathfrak{g}$ is also a root of
$\mathfrak{h}$. This is a contradiction.

\medskip
\textbf{Subcase 4.}\quad We assume that $\alpha=e_1+e_2$ and $\beta=-e_3-e_4$.
Then the unit vector
$\alpha'=\frac12 e_1+\frac12 e_2-\frac12 e_3-\frac12 e_4$ is a root of $\mathfrak{h}$.
By (2) of Lemma \ref{trick-lemma-2}, $e_2-e_3$ is root of $\mathfrak{h}$ with  length $\sqrt{2}$. This implies that $\alpha'$ cannot be the root of   a simple factor of $\mathfrak{h}$
isomorphic to $G_2$.
Now none of the vectors $\mathrm{pr}(\pm 2 e_i)$ with
$1\leq i\leq 4$ is orthogonal to $\alpha'$, and they all have the same length
$\sqrt{3}$. So they are not roots of $\mathfrak{h}$, and $\mathfrak{g}_{\pm e_i}\subset
\mathfrak{m}$, for
$1\leq i\leq 4$.
Let  $v_i$ be a nonzero vector in $\mathfrak{g}_{\pm e_i}$. Then the elements  $v_i$
with $1\leq i\leq 4$,  and the elements $e_i$ with $4<i\leq n$, span a FSCS. This is a contradiction
to Theorem \ref{thm-4-2}.
Therefore in this subcase the normal homogeneous space cannot be positively curved.

\medskip
\textbf{Subcase 5.}\quad Now in the last subcase we consider the situation that the angle between $\alpha$ and $\beta$ is $\frac{3\pi}{4}$. Obviously we can assume that $\alpha=e_1+e_2$ and
$\beta=-2e_1$. Then $\alpha'=-\frac15 e_1+\frac35 e_2$ is a root of $\mathfrak{h}$ with
length $\frac{\sqrt{10}}{5}$. Now none of the
vectors $\mathrm{pr}(\pm e_i\pm e_j)$, with  $1\leq i\leq 2<j\leq n$,  is  orthogonal to $\alpha'$,
and each has a length $\frac{\sqrt{11}}{\sqrt{10}}$ or $\frac{\sqrt{19}}{\sqrt{10}}$. Thus none of these vectors   is
a root of $\mathfrak{h}$. Similarly, $\mathrm{pr}(\pm 2e_2)$ are not
roots of $\mathfrak{h}$. On the other hand, any  root of the subalgebra $\mathfrak{k}$ is either a linear
combination of $e_1$ and $e_2$, or a linear combination of the elements $e_i$ with $i>2$. Thus
$\mathfrak{k}$
is contained in $C_2\oplus C_{n-2}$ for $n>3$, or contained in $C_2\oplus A_1$ for $n=3$. If $n>3$, then Proposition \ref{summary-prop} indicates that
$G/K$ can not admit positively curved normal homogeneous Finsler metrics, which is
a contradiction.

If $n=3$, then we have $\mathfrak{g}=C_3$ and $\mathfrak{h}=A_1\oplus A_1$. Consider a linearly independent
commuting pair $u\in\mathfrak{g}_{\pm 2e_2}$ and
$v\in\mathfrak{g}_{\pm (e_1-e_3)}$ in $\mathfrak{m}$. From the
centralizer of $\mathfrak{s}=\mathbb{R}u+\mathbb{R}v\subset\mathfrak{m}$ in $\mathfrak{g}$, we can find nonzero vector $w_1=e_1+e_3$, and another nonzero vector $w_2$ from
$\mathfrak{g}_{\pm 2e_1}+\mathfrak{g}_{\pm 2e_3}$. Then the vectors $u$, $v$ and $w_1$
span a Cartan subalbegra $\mathfrak{t}_1$, and with $w_1$ changed to $w_2$, we have another $\mathfrak{t}_2$. So $\mathfrak{s}=\mathfrak{t}_1\cap\mathfrak{t}_2$ is a flat
splitting subalgebra of $\mathfrak{g}$. This is a contradiction to Theorem \ref{thm-4-2}.

To summarize, there is no positively curved normal homogeneous Finsler space in this subcase.

 Now we complete the proof of Theorem \ref{main1} and Theorem \ref{main2}. Up to equivalence, we have found all smooth coset spaces which may admit positively curved normal homogeneous Finsler metrics, as listed in Theorem \ref{main1}. They
are exactly those in Berger's classification work \cite{Ber61} (plus \cite{WILKING1990}), which does admit positive
normal homogeneous Riemannian metrics (which are just a special class of normal homogeneous Finsler metrics). This finishes the proof of Theorems \ref{main1} and \ref{main2}.
\section{Proof of Theorem \ref{main3}}
In this section we give a proof of Theorem \ref{main3}. It will be completed through a case by case study on the coset spaces appearing in Theorem \ref{main1}.

The coset spaces in (1) of Theorem \ref{main1} are Riemannian symmetric spaces of rank one.
For these spaces,  the isotropy group acts transitively
on the unit sphere $\mathcal{S}$ of $\mathfrak{m}$ with respect to any $\mathrm{Ad(G)}$-invariant inner product of $\mathfrak{g}$ (which is unqiue up to  homotheties). So
the corresponding normal homogeneous Finsler metrics  must be Riemannian.

On the other hand, in (2) of Theorem \ref{main1}, the above assertion for the isotropic action is also valid for the coset spaces $\mathrm{G}_2/\mathrm{SU}(3)$ and $S^3=\mathrm{SU}(2)/\mathrm{SU}(1)=\mathrm{Sp}(1)$, $S^6=G_2/\mathrm{SU}(3)$, and
$S^7=\mathrm{Spin}(7)/\mathrm{G}_2$. Therefore the normal homogeneous Finsler
metrics on the above coset spaces must also be Riemannian. This proves the first statement of Theorem \ref{main3}.

Now we turn to the second statement.  In the following we use the term ``the other spaces" to indicate the spaces in the lists of Theorem \ref{main1} which does not fall into the spaces considered in the above. To complete the proof,   we introduce the  Condition (R) which is vital
for our consideration. Let $G/H$ be a coset spaces of a compact Lie group $G$ and
$\mathfrak{g}=\mathfrak{h}+\mathfrak{m}$ be the reductive decomposition with respect to a normal
Riemannian metric. Denote the unit sphere of $\mathfrak{m}$ with respect to the induced inner product by $\mathcal{S}$.

\medskip
\noindent{\textbf{Condition (R):}
the unit sphere $\mathcal{S}$ in $\mathfrak{m}$ is contained in a single $\mathrm{Ad}(G)$-orbit in $\mathfrak{g}$.

\medskip
We now give a useful lemma. Recall that a Riemannian homogeneous manifold $G/H$ is
called isotropic, if the action of $H$ on the unit sphere $\mathcal{S}$ is transitive. Note that
an isotropic normal homogeneous Riemannian manifold must satisfy the condition (R), but it is unknown whether the converse is true.
\begin{lemma}
Let $G/H$ be a coset space of a compact Lie group $G$ with a normal homogeneous Riemannian
metric $Q$. If $G/H$ satisfies the condition (R), then any normal homogeneous Finsler metric
on $G/H$ must be a positive multiple of $Q$, hence must be Riemannian. On the other hand,
if $G/H$ does not satisfy the condition (R) and the Riemannian
metric $Q$ has positive sectional curvature, then there exist
non-Riemannian normal homogeneous Finsler metrics on $G/H$ with positive flag curvature.
\end{lemma}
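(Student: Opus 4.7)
I will treat the two directions of the equivalence separately.

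\emph{Forward direction.} Suppose condition (R) holds and let $F$ be a normal homogeneous Finsler metric on $G/H$, coming from a bi-invariant Finsler metric $\bar F$ on $G$; equivalently, $\bar F$ corresponds to an $\mathrm{Ad}(G)$-invariant Minkowski norm on $\mathfrak{g}$. Since condition (R) places $\mathcal{S}$ in a single $\mathrm{Ad}(G)$-orbit and $\bar F$ is constant on orbits, $\bar F|_{\mathcal{S}}$ is constant, and by positive homogeneity $\bar F|_{\mathfrak{m}}=c\sqrt{Q|_{\mathfrak{m}}}$ for some $c>0$. The remaining task is to show that in the subduction formula $F(w)=\inf_{h\in\mathfrak{h}}\bar F(w+h)$ the infimum is attained at $h=0$, so that $F$ agrees with $\bar F|_{\mathfrak{m}}$ and therefore with $c\sqrt{Q|_{\mathfrak{m}}}$.

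The key step, which I expect to be the main obstacle, is to establish the inequality $\bar F(w+h)\ge\bar F(w)$ for every $w\in\mathfrak{m}$ and $h\in\mathfrak{h}$. The approach is to combine the $Q$-orthogonal identity $Q(w+h,w+h)=Q(w,w)+Q(h,h)\ge Q(w,w)$ with a convexity argument on the $\mathrm{Ad}(G)$-invariant convex body $\{\bar F\le 1\}\subset\mathfrak{g}$: its intersection with $\mathfrak{m}$ is exactly the $c^{-1}Q$-ellipsoid in $\mathfrak{m}$, and one uses the $\mathrm{Ad}(G)$-invariance of this body together with the $Q$-orthogonality $\mathfrak{m}\perp\mathfrak{h}$ to preclude the body from extending in any $\mathfrak{h}$-direction at points of $\mathfrak{m}$ in a way that would lower the subduction infimum below $c\sqrt{Q(w,w)}$.

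\emph{Converse direction.} Suppose condition (R) fails and fix $w_1,w_2\in\mathcal{S}$ lying in distinct $\mathrm{Ad}(G)$-orbits $\mathcal{O}_1,\mathcal{O}_2\subset\mathfrak{g}$. I will construct a non-Riemannian normal homogeneous Finsler metric by a small perturbation of $\sqrt Q$. Pick a smooth $\mathrm{Ad}(G)$-invariant, positively homogeneous function $\phi:\mathfrak{g}\setminus\{0\}\to\mathbb{R}$ of degree one with $\phi(w_1)\ne\phi(w_2)$; such a $\phi$ is obtained by first choosing a smooth Weyl-invariant function on a Cartan subalgebra that separates the images of $\mathcal{O}_1$ and $\mathcal{O}_2$ and then extending by $\mathrm{Ad}(G)$-invariance, smoothing if necessary by convolution with a bump function on $G$. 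For sufficiently small $\epsilon>0$, $\bar F_\epsilon:=\sqrt Q+\epsilon\phi$ remains an $\mathrm{Ad}(G)$-invariant Minkowski norm on $\mathfrak{g}$ (strong convexity is preserved under small $C^{2}$-perturbations), so it defines a bi-invariant Finsler metric on $G$, and the subduction yields a normal homogeneous Finsler metric $F_\epsilon$ on $G/H$.

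Two properties remain to be verified. First, $F_\epsilon$ is non-Riemannian: a first-order expansion in $\epsilon$, using that at $\epsilon=0$ the horizontal lift of $w\in\mathfrak{m}$ is $w$ itself and hence the shift of the optimizer in $h$ contributes only to order $\epsilon^{2}$, gives $F_\epsilon(w_i)=1+\epsilon\phi(w_i)+O(\epsilon^{2})$; since $\phi(w_1)\ne\phi(w_2)$ while $Q(w_1,w_1)=Q(w_2,w_2)$, the values $F_\epsilon(w_1)$ and $F_\epsilon(w_2)$ disagree, so $F_\epsilon$ is not a scalar multiple of $\sqrt{Q|_{\mathfrak{m}}}$ and is therefore non-Riemannian. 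Second, $F_\epsilon$ has positive flag curvature: flag curvature depends continuously on the Minkowski norm and its first two derivatives; at $\epsilon=0$ the metric is $Q^{M}$ with a uniform positive lower bound on sectional curvature by the compactness of $G/H$, so this lower bound persists as positive flag curvature of $F_\epsilon$ for all sufficiently small $\epsilon>0$.
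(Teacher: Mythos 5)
Your converse direction tracks the paper's proof closely. The paper perturbs the Riemannian norm as $\bar F_\varepsilon(x)=\sqrt{Q(x)+\varepsilon|x|^2 f(x/|x|)}$, where $f$ is an $\mathrm{Ad}(G)$-averaged bump function separating the orbits of $x_1$ and $x_2$; your $\bar F_\epsilon=\sqrt Q+\epsilon\phi$ with $\phi$ obtained by averaging is an interchangeable small perturbation. The paper's check that the subduced $F_\epsilon$ is non-Riemannian compares indicatrices near $x_1$ versus $x_2$, whereas you use a first-order envelope expansion; both are sound, though your argument should invoke compactness of $\mathcal S$ to make the $O(\epsilon^2)$ term uniform. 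The "positive curvature is $C^2$-open plus compactness" step is exactly the paper's.

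The forward direction is where a genuine gap sits. You correctly isolate the key reduction: after deducing $\bar F|_{\mathfrak m}=c\sqrt{Q|_{\mathfrak m}}$ from (R), one must still show the subduction infimum $\inf_{h\in\mathfrak h}\bar F(w+h)$ is attained at $h=0$ — equivalently, that $w$ is its own horizontal lift. You flag this as "the main obstacle" but leave it as a sketch, and the sketched mechanism (convexity of $\{\bar F\le 1\}$ combined with $Q$-orthogonality of $\mathfrak m$ and $\mathfrak h$) cannot work on its own. The horizontal-lift condition is $\langle v,\mathfrak h\rangle^{\bar F}_v=0$, where the pairing is the Hessian of $\bar F$, not $Q$; these are different, and already for non-simple $\mathfrak g$ one can write down $\mathrm{Ad}(G)$-invariant, even quadratic, $\bar F$ for which $\mathfrak m=\mathfrak h^{\perp_Q}$ is not $\bar F$-Hessian-orthogonal to $\mathfrak h$, so the horizontal lift of some $w\in\mathfrak m$ is not $w$. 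Hence the step must use condition (R) essentially, not merely convexity and $Q$-orthogonality. For fairness: the paper itself disposes of this assertion in one line as "obvious," so it is underjustified there too; but as written your forward direction does not close the argument. The natural tool, in the spirit of the paper's Lemma 4.3, is that $d\bar F_w$ annihilates the orbit-tangent $[\mathfrak g,w]$, and under (R) one has $T_w\mathcal S\subseteq[\mathfrak g,w]$, whence $\mathfrak z(w)\subseteq\mathbb Rw+\mathfrak h$; the remaining (and nontrivial) point is then to show the $Q$-gradient of $\bar F$ at $w$, which lies in the center of $\mathfrak z(w)$, must actually be proportional to $w$.
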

\begin{proof}
The first assertion is obvious, since if $G/H$ satisfies the condition (R), then the restriction of the induced Minkowski norm of any normal homogeneous Finsler metric in $\mathcal{S}$ must be a constant multiple of the Euclidean norm of $Q$.

Now we prove the second assertion.
By the assumption, there exist two points $x_1,x_2\in \mathcal{S}$,
such that $\mathrm{Ad}(G)\cdot x_1\cap\mathrm{Ad}(G)\cdot x_2=\emptyset$.
By the parity of unit of smooth manifolds, there exists a
smooth nonnegative function $f_1$ on the unit sphere $\mathcal{S}'$ of $\mathfrak{g}$ with respect to the
inner product induced by $Q$, such that $f_1$ is equal to $1$ on
$\mathrm{Ad}(G)\cdot x_2$ and the support of $f_1$ is contained in
an open neighborhood $\mathcal{U}$ of $\mathrm{Ad}(G)\cdot x_2$ with
$\overline{\mathcal{U}}\cap\mathrm{Ad}(G)\cdot x_1=\emptyset$.
Now using the bi-invariant Haar measure of the Lie group $G$, we can define a
smooth nonnegative function $f$ on $\mathcal{S}'$ which is invariant
under the adjoint action of $G$ such that $f(\mathrm{Ad}(G)\cdot x_2)=1$ and there is an open subset of $\mathcal{U}_1$ containing $\mathrm{Ad}(G)\cdot x_2 $, which is invariant under the adjoint action of $G$, such that $\mathcal{U}_1\cap \mathrm{Ad}(G)\cdot x_1=\emptyset$ and $f(x)=0$, for any $x\notin \mathcal{U}_1$. Now for any positive number $\varepsilon $, we define
a real function on $\mathfrak{g}\backslash \{0\}$ by
$$\bar{F}_\varepsilon(x)=\sqrt{ \langle x, x\rangle+\varepsilon |x|^2f(\frac{x}{|x|}) }.$$
Then it is easy to check that for sufficiently small $\varepsilon$, $\bar{F}_\varepsilon$ is a positive definite
Minkowski norm on $\mathfrak{g}$ (see \cite{DE11}). Since $f$ is invariant under the adjoint action of $G$, $\bar{F}_\varepsilon$ is bi-invariant. On the other hand, when $\varepsilon$ is sufficiently close to $0$, the
indicatrix of the Minkowski norm $\bar{F}_\varepsilon$
coincides with $\mathcal{S}'$ around $x_1$, but differs with $\mathcal{S}'$
around $x_2$. Thus the corresponding normal homogeneous metric $F_\varepsilon$ on $G/H$
are non-Riemannian when $\varepsilon$ is positive and sufficiently small. By the assumption, $F_0$ has positive sectional curvature, so for
$\epsilon$ sufficiently close to $0$, $F_\epsilon$ has positive flag curvature.
\end{proof}

%

We now return to the proof of the there. We first   assert that for any of the other spaces such that the group $G$ in
$M=G/H$ is not simple, Condition (R) is not satisfied. In fact, we can re-scale the bi-invariant inner product differently on each simple factor or
Euclidean factor to get a family of induced inner product on $\mathfrak{m}$ which are not the same
up to scalar multiplications. Therefore for any nonzero vector $x\in\mathfrak{m}$, there exists a bi-invariant inner product $\langle,\,,\rangle_1$ on $\mathfrak{g}$, such
that the unit sphere $\mathcal{S}_1$ satisfies the conditions that $\mathcal{S}_1\cap \mathrm{Ad}(G)\cdot x\ne \emptyset$ and $\mathcal{S}_1\not\subset \mathrm{Ad}(G)\cdot x$. This proves the assertion.

Now we consider the other spaces in  Theorem \ref{main1} with simple Lie group $G$. We need some calculation.
For any Lie algebra $\mathfrak{g}$ of the Lie groups, there is a canonical way to present the elements in the Lie algebra $\mathfrak{g}$ by real
or complex
matrices. Let  $v\in\mathfrak{g}$. Then we define the {\it eigenvalue sequence} of $v$ to be the sequence of all
its eigenvalues of  the corresponding matrix, canonically ordered (notice that all the eigenvalues
belong to the imaginary line), and viewed as a vector. It is easy to see that, if Condition (R)
is satisfied,
then the eigenvalue sequences for each pair of vectors in
$\mathfrak{m}$ must be linearly dependent to each other. Therefore, if we can find a family of elements in $\mathfrak{m}$ such that some of the eigenvalue sequences of them are linearly independent, then the corresponding coset space does not satisfy Condition (R). In the following, we will
construct such  a family $v(t)$ of vectors in $\mathfrak{m}$, $t\in\mathbb{R}$, presented canonically by matrices.

First we consider  $M=\mathrm{SU}(n)/\mathrm{SU}(n-1)$ with $n>2$. With $\mathfrak{g}$ identified with
the matrix algebra $\mathfrak{su}(n)$, and $\mathfrak{h}$ identified with $\mathfrak{su}(n-1)\subset \mathfrak{su}(n)$,
corresponding to the left up block, we have a family of vectors
$$ v(t)=\left(
         \begin{array}{ccccc}
           -\sqrt{-1} & 0 & \cdots & 0 & 0 \\
           0 & \ddots & \ddots & \vdots & \vdots \\
           \vdots & \ddots & -\sqrt{-1} & 0 & 0 \\
           0 & \cdots & 0 & -\sqrt{-1} & t \\
           0 & \cdots & 0 & -t & (n-1)\sqrt{-1} \\
         \end{array}
       \right)\in\mathfrak{m}, \quad t\in\mathbb{R}.
$$

Next consider $M=\mathrm{Sp}(n)/\mathrm{Sp}(n-1)$ with $n>1$. With $\mathfrak{g}$ identified
with $\mathfrak{sp}(n)\subset \mathfrak{su}(2n)$, such that any quaternion number $a+b\mathbf{i}+c\mathbf{j}+
d\mathbf{k}$ is identified with the matrix $$\left(
                                         \begin{array}{cc}
                                           a+b\sqrt{-1} & c+d\sqrt{-1} \\
                                           -c+d\sqrt{-1} & a-b\sqrt{-1} \\
                                         \end{array}
                                       \right),$$
and $\mathfrak{h}$ identified with subalgebra for the left up block, we have
a family of vectors
$$ v(t)=\left(
          \begin{array}{ccccccc}
          0      & 0      & 0      & 0      &\cdots & 0         & 0 \\
          0      & \ddots & \ddots & \ddots &\ddots & \vdots    & \vdots \\
          0      & \ddots & 0      & 0      & 0     & 0         & 0 \\
          0      & \ddots & 0      & 0      & 0     & 0         & t \\
          \vdots & \ddots & 0      & 0      & 0     & -t        & 0 \\
          0      & \cdots & 0      & 0      & t     & \sqrt{-1} & 0 \\
          0      & \cdots & 0      & -t     & 0     & 0         & -\sqrt{-1} \\
          \end{array}
        \right)\in\mathfrak{m}, \quad t\in\mathbb{R}.
$$

Now we consider $M=\mathrm{Spin}(9)/\mathrm{Spin}(7)$. Identifying  $\mathfrak{g}$  with
the real matrix algebra $\mathfrak{so}(9)$, and applying our discussion about the root system of $\mathfrak{h}$
for this subcase, we have a family of vectors
$$v(t)=\left(
         \begin{array}{ccccccccc}
           0       & t      & 0    & 0 & 0 & 0 & 0 & 0 & 0 \\
           -t      & 0      & 1    & 0 & 0 & 0 & 0 & 0 & 0 \\
           0       & -1     & 0    & 0 & 0 & 0 & 0 & 0 & 0 \\
           0       & 0      & 0    & 0 & 1 & 0 & 0 & 0 & 0 \\
           0       & 0      & 0    & -1& 0 & 0 & 0 & 0 & 0 \\
           0       & 0      & 0    & 0 & 0 & 0 & 1 & 0 & 0 \\
           0       & 0      & 0    & 0 & 0 & -1& 0 & 0 & 0 \\
           0       & 0      & 0    & 0 & 0 & 0 & 0 & 0 & 1 \\
           0       & 0      & 0    & 0 & 0 & 0 & 0 & -1& 0 \\
         \end{array}
       \right)\in\mathfrak{m}, \quad t\in\mathbb{R},
$$
where the right down eight by eight block is from $\mathbb{R}(e_1+e_2+e_3+e_4)\in
\mathfrak{m}$, and the rest terms is from $\mathfrak{g}_{\pm e_1}\in\mathfrak{m}$.

Let us   consider the space $M=\mathrm{Sp}(2)/\mathrm{SU}(2)$. Using the matrix presentation given in
\cite{WILKING1990},  we have a family of vectors
$$v(t)=\left(
         \begin{array}{cc}
           \mathbf{i} & t\mathbf{j} \\
           t\mathbf{j} & -3\mathbf{i} \\
         \end{array}
       \right)
=\left(
   \begin{array}{cccc}
     \sqrt{-1} & 0 & 0 & t \\
     0 & -\sqrt{-1} & -t & 0 \\
     0 & t & -3\sqrt{-1} & 0 \\
     -t & 0 & 0 & 3\sqrt{-1} \\
   \end{array}
 \right)\in\mathfrak{m}, \quad t\in\mathbb{R}.
$$

Finally, consider the space $M=\mathrm{SU}(5)/\mathrm{Sp}(2)S^1$. Using the matrix presentation given
in \cite{WILKING1990}, and identifying
$$\left(
    \begin{array}{cc}
      a_1+b_1\mathbf{i}+c_1\mathbf{j}+d_1\mathbf{k} & a_2+b_2\mathbf{i}+c_2\mathbf{j}+d_2\mathbf{k} \\
      a_3+b_3\mathbf{i}+c_3\mathbf{j}+d_3\mathbf{k} & a_4+b_4\mathbf{i}+c_4\mathbf{j}+d_4\mathbf{k} \\
    \end{array}
  \right)\in \mathfrak{sp}(2)
$$
with
$$\left(
    \begin{array}{cccc}
      a_1+b_1\sqrt{-1}  & c_1+d_1\sqrt{-1} & a_2+b_2\sqrt{-1} & c_2+d_2\sqrt{-1} \\
      -c_1+d_1\sqrt{-1} & a_1-b_1\sqrt{-1} & -c_2+d_2\sqrt{-1} & a_2-b_2\sqrt{-1} \\
      a_3+b_3\sqrt{-1} & c_3+d_3\sqrt{-1}  & a_4+b_4\sqrt{-1} & c_4+d_4\sqrt{-1} \\
      -c_3+d+3\sqrt{-1} & a_3-b_3\sqrt{-1} & -c_4+d_4\sqrt{-1} & a_4-b_4\sqrt{-1} \\
    \end{array}
  \right)
$$
at the left up corner of $\mathfrak{su}(5)$,  we have a family of vectors
$$v(t)=\left(
         \begin{array}{ccccc}
           \sqrt{-1} & 0         & 0 & 0 & 0 \\
           0         & \sqrt{-1} & 0 & 0 & 0 \\
           0 & 0     & -\sqrt{-1}& 0 & 0 \\
           0 & 0 & 0 & -\sqrt{-1} & t \\
           0 & 0 & 0 & -t & 0 \\
         \end{array}
       \right)\in\mathfrak{m}, \quad t\in\mathbb{R}.
$$

Now a  direct calculation shows that, in each of the above cases, the eigenvalue sequence of the element $v(0)$ is
linearly independent to the eigenvalue sequence of $v(t)$ for $t\ne 0$ and $|t|$ sufficiently small.
Therefore the spaces in these cases do not satisfy  Condition (R).

The proof of Theorem \ref{main3} is now completed.

\begin{remark}
For those non-Riemannian normal homogeneous spaces listed in Theorem \ref{main1}, the properties of their isotropic
representations (i.e., the $\mathrm{ad}\mathfrak{h}$ actions on $\mathfrak{m}$) can
sometimes give us some more information on the types of invariant Finsler metrics on those spaces. For example, on the homogeneous
spheres $\mathrm{SU}(n)/\mathrm{SU}(n-1)$,
$\mathrm{U}(n)/\mathrm{U}(n-1)$, $\mathrm{Sp}(n)/\mathrm{Sp}(n-1)$ and $\mathrm{Sp}(n)S^1/\mathrm{Sp}(n-1)S^1$, any invariant Finsler metric must be an $(\alpha,\beta)$-metric.
On $\mathrm{Sp}(n)\mathrm{Sp}(1)/\mathrm{Sp}(n-1)\mathrm{Sp}(1)$, any invariant Finsler metric must be an $(\alpha_1,\alpha_2)$-metric (see \cite{XuDeng2014}).
\end{remark}
%
%

\end{document}